\documentclass{article}

\usepackage[utf8]{inputenc}
\usepackage{subfigure}
\usepackage{amsmath}
\usepackage{amssymb}
\usepackage{amsthm}
\usepackage[pdftex]{hyperref}
\usepackage{tikz}
\usepackage{caption}
\usepackage[round]{natbib}
\usepackage{fancyhdr}
\usepackage{amsfonts}
\usepackage{times}
\usepackage{ifpdf}
\usepackage{latexsym}
\usepackage{graphicx}
\usepackage{enumerate}

% *** les environnements ***
%\theoremstyle{break}
\newtheorem{definition}{Definition}[section]
\newtheorem{proposition}[definition]{Proposition}
\newtheorem{theorem}[definition]{Theorem}
\newtheorem{lemma}[definition]{Lemma}
\newtheorem{corollary}[definition]{Corollary}
\newtheorem{remark}[definition]{Remark}
\newtheorem{openproblem}[definition]{Open problem}

\newlength{\taille} \makeatletter
\def\qed{%
  \ifmmode\vrule width .5\baselineskip height 0pt depth .5\baselineskip%
  \else{%
    \unskip\nobreak\hfil%
    \setlength{\taille}{\f@size\p@}%
    \penalty50\hskip1em\null\nobreak\hfil\vrule width .5\taille height
    0pt depth .5\taille
    \parfillskip=0pt\finalhyphendemerits=0\endgraf}%
  \fi} \makeatother

\newlength{\taillepreuve}
\newenvironment{demo}{%
  \setbox123=\hbox{Proof:}%
  \taillepreuve=\wd123%

  \vskip-\lastskip\nobreak\medskip\par\noindent\box123\list{}{\leftmargin
    .5\taillepreuve}\parindent=1em\item} {\qed\endlist\bigskip}

\DeclareMathOperator{\Domain}{Domain}
\DeclareMathOperator{\OrderFunction}{OrderFunction}

\DeclareMathOperator{\Height}{Height}
\DeclareMathOperator{\Level}{Level}

\DeclareMathOperator{\Inv}{Inv}
\DeclareMathOperator{\Next}{Next}
\DeclareMathOperator{\Length}{L}

\DeclareMathOperator{\Prelude}{Prelude}

\DeclareMathOperator{\MultiplyCost}{M}
\DeclareMathOperator{\Up}{Up}
\DeclareMathOperator{\Down}{Down}

\DeclareMathOperator{\DigitMin}{DigitMin}
\DeclareMathOperator{\Sup}{Supremum}
\DeclareMathOperator{\NeigbourhoodOrder}{NeBOr}
\DeclareMathOperator{\RMFTrunk}{RMFTrunk}

\DeclareMathOperator{\Identify}{Identify}
\DeclareMathOperator{\LinearExtensions}{LinearExtensions}
\DeclareMathOperator{\Fusion}{Fusion}
\DeclareMathOperator{\Label}{Label}

\newcommand\np{\ensuremath{\mathsf{NP}}}
\newcommand\diesep{\ensuremath{\mathsf{\#P}}}
\newcommand\rft{\ensuremath{\mathsf{RFT}}}
\newcommand\rmft{\ensuremath{\mathsf{RMFT}}}
\newcommand{\nn}{\ensuremath{\mathbb N}}
\newcommand{\zz}{\ensuremath{\mathbb Z}}

\newcommand\mycaption[1]{\caption{\protect\parbox[t]{\captionwidth}{#1}}}

\begin{document}

%\pagenumbering{arabic}
%\setcounter{page}{1}

%\pagestyle{fancy}
%\fancypagestyle{plain}{
%\fancyhf{}%
%\fancyhead[L]{Fraternal Journal of Mathematics and Informatics}%
%\fancyhead[R]{FJMI vol. ?}%
%\fancyfoot[L]{ISSN 111}%
%\fancyfoot[C]{\copyright by the author(s)}%
%\fancyfoot[R]{\newline Distributed under a Creative Commons Attribution 4.0 International License}%
%}

\author{Laurent Lyaudet\footnote{\url{https://lyaudet.eu/laurent/}, laurent.lyaudet@gmail.com}}
\title{On finite width questionable representations of orders}
%\keywords{order, universal, lexicographic, next, questionable representation}

%\publicationdetails{VOL}{2017}{ISS}{NUM}{SUBM}
\maketitle
\begin{abstract}
In this article, we study ``questionable representations'' of (partial or total) orders,
 introduced in our previous article ``A class of orders with linear? time sorting algorithm''.
(Later, we consider arbitrary binary functional/relational structures instead of orders.)
A ``question'' is the first difference between two sequences (with ordinal index) of elements of orders/sets.
In finite width ``questionable representations'' of an order \begin{math}O\end{math},
comparison can be solved by looking at the ``question'' that compares elements
of a finite order \begin{math}O'\end{math}.
A corollary of a theorem by Cantor (1895) is that
all countable total orders have a binary (width 2) questionable representation.
We find new classes of orders on which testing isomorphism or counting the number of linear extensions
can be done in polynomial time.
We also present a generalization of questionable-width, called balanced tree-questionable-width,
and show that if a class of binary structures has bounded tree-width or clique-width,
then it has bounded balanced tree-questionable-width.
But there are classes of graphs of bounded balanced tree-questionable-width
and unbounded tree-width or clique-width.
\end{abstract}

Current version : 2019/12/30 with masthead removed

Keywords : orders, graphs, universal total order, first difference principle, lexicographic, next,
questionable representation, questionable-width, hierarchical decompositions,
tree-width, clique-width, (balanced) tree-questionable-width, (directed) cographs,
transitive series parallel graphs, interval orders, series parallel orders,
series parallel interval orders

\section{Introduction}
\label{section:introduction}

Apologies: We do science as a hobby, it is not our daily job and there is an impact on the quality of the bibliography.
For an unpublished work we did in 2015, we started doing bibliographic search during 9 months,
but all the gathered references were lost when a hacker erased all our files on our laptop.
Since then, we chose to publish our ideas on arXiv and correct the bibliography afterwards.
We tried our best, with this version, to correct missing references for well-known definitions.
Many results in this article still seem to be new, the main contribution being a generalization of first difference principle in a sequence,
to first difference principle in a tree.

In this article, we study ``questionable representations'' of (partial or total) orders,
 introduced in our previous article ``A class of orders with linear? time sorting algorithm''
 (\cite{DBLP:journals/corr/abs-1809-00954}).
A ``question'' is the first difference between two sequences (with ordinal index) of elements of orders/sets.
In a finite width ``questionable representation'' of an order \begin{math}O\end{math},
comparison can be solved by looking at the ``question'' that compares elements
of a finite order \begin{math}O'\end{math}.
A corollary of a theorem by \cite{Cantor1895} is that
all countable total orders have a binary (width 2) questionable representation.
We study the class of partial orders of questionable-width 2 and some related classes of orders,
exhibiting a wealth of structural results.
The following classes of orders appear naturally:
series parallel orders,
series parallel interval orders,
weak orders.
We also prove a few algorithmic results such as counting linear extensions for cedars,
testing order isomorphism for up-regular orders.
Last we study the links between questionable-width, tree-width and clique-width,
proving that tree-width and clique-width (resp. tree-width and questionable-width)
are incomparable with respect to weighted or labeled graphs.
However, questionable-width (for finite width and finite length) is strictly weaker than clique-width.
We also present a generalization of questionable-width, called tree-questionable-width,
that is too powerful since any binary structure (structure with binary relations or functions, such as a graph, an order, etc.)
has linear tree-questionable-width 2.
We limit this generalization to balanced tree-questionable-width and show that
if a class of binary structures has bounded tree-width or clique-width,
then it has bounded balanced tree-questionable-width.
We also show that there are classes of graphs of bounded balanced tree-questionable-width
and unbounded tree-width or clique-width.

Section \ref{section:definitions_and_notations} contains most of the definitions and notations used in this article.
In section \ref{section:strict_binary_questionable_representations_for_total_orders},
we review results of Hausdorff and Sierpi\'nski extending the theorem of Cantor to all total orders.
Section \ref{section:total_questionable_representations_for_partial_orders} characterizes
the class of partial orders with total questionable representation (of width 2);
this is the class of series parallel interval orders with an additional constraint when the order is infinite.
In section \ref{section:algorithms_for_finite_orders_with_property_itov}, we present a wealth of structural results for these partial orders,
using some particular induced weak orders.
Section \ref{section:questionable_representations_width2_for_partial_orders} characterizes the class of partial orders of questionable-width 2;
this is the class of series parallel orders.
In section \ref{section:questionable_representations_for_partial_orders}, we study the links between questionable-width, tree-width and clique-width.
It also presents (balanced) tree-questionable-width.

\section{Definitions and notations}
\label{section:definitions_and_notations}

Throughout this article, we use the following definitions and notations.
\begin{math}O\end{math} denotes an order (it may be either a partial, or a total/linear order),
in particular \begin{math}O^{0,1}\end{math} denotes the binary total order where \begin{math}0 < 1\end{math}.
We denote \begin{math}\Domain(O)\end{math}, the domain of the order \begin{math}O\end{math}
(for example, \begin{math}\Domain(O^{0,1}) = \{0,1\}\end{math}).
We write \begin{math}x < y\end{math}, and \begin{math}x > y\end{math} as usual to express the order between two elements;
we also write \begin{math}x \sim y\end{math} when two elements are incomparable in the partial order considered.
We denote \begin{math}\OrderFunction(O)\end{math}, the order function of the order \begin{math}O\end{math}
defined from \begin{math}\Domain(O)^2\end{math} to \begin{math}\{=,\sim,<,>\}\end{math}
(for example, \begin{math}\OrderFunction(O^{0,1}) = \{((0,0),=),~((0,1),<),~((1,0),>),~((1,1),=)\}\end{math}).
\begin{math}\mathcal{O}_i\end{math} denotes a sequence of orders indexed by the ordinal \begin{math}i\end{math},
\begin{math}i = \Length(\mathcal{O}_i)\end{math} is the length of \begin{math}\mathcal{O}_i\end{math},
in particular \begin{math}\mathcal{O}^{0,1}_{\omega} = (O^{0,1})_{\omega}\end{math}
denotes the sequence of binary orders repeated a countable number of times, \begin{math}\omega\end{math} is its length.
Given two ordinals \begin{math}i < j\end{math}, and a sequence of orders \begin{math}\mathcal{O}_{j}\end{math},
we denote \begin{math}\mathcal{O}_{j}[i] = O_{i}\end{math}, the item of rank \begin{math}i\end{math} in the sequence
(the ranks start at 0).
The reader might know Von Neumann's construction of the ordinals
(an ordinal can be seen as a set that contains exactly all ordinals that are strictly before it, the 0th ordinal is the empty set),
in which case we can consider that \begin{math}i \in j \Leftrightarrow i < j\end{math}.
We also use this notation, for example in \begin{math}\mathcal{O}_{i} = (O_j)_{j \in\ i}\end{math}.
While ordinals are frequently denoted by greek letters, 
we will try to keep using \begin{math}i, j, k, l\end{math} for this purpose,
so that it recalls finite indices to the reader.

We denote \begin{math}\Inv(O)\end{math}, the inverse order of \begin{math}O\end{math};
for example, \begin{math}\Inv(O^{0,1}) = O^{1,0}\end{math} is the order on 0 and 1
where \begin{math}1 < 0\end{math}.
We also denote \begin{math}\Inv(\mathcal{O}_i) = (\Inv(O_j))_{j \in\ i}, O_j = \mathcal{O}_i[j]\end{math},
the sequence of inverse orders of \begin{math}\mathcal{O}_i\end{math};
for example, if \begin{math}\mathcal{O}_3 = (O^{0,1}, O^{0,1}, O^{0,1,2})\end{math},
then \begin{math}\Inv(\mathcal{O}_3) = (O^{1,0}, O^{1,0}, O^{2,1,0})\end{math}
(each order in the sequence is inverted but the ranks of the items are preserved).
Note that \begin{math}\Inv\end{math} is an involution: \begin{math}\Inv(\Inv(O)) = O\end{math}, 
and \begin{math}\Inv(\Inv(\mathcal{O}_i)) = \mathcal{O}_i\end{math}.

\begin{definition}[Prelude sequence]
Given two ordinals \begin{math}i < j\end{math},
and two sequences of orders/sets \begin{math}\mathcal{O}_{i}, \mathcal{O}_{j}\end{math},
we say that \begin{math}\mathcal{O}_{i}\end{math} is a \emph{prelude (sequence)} of \begin{math}\mathcal{O}_{j}\end{math},
if and only if \begin{math}\mathcal{O}_{i}[k] = \mathcal{O}_{j}[k], \forall k \in i\end{math} (we have \begin{math}k < i < j\end{math}).
We note \begin{math}\Prelude(\mathcal{O}_{j})\end{math} the set of all prelude sequences
 of the order/set sequence \begin{math}\mathcal{O}_{j}\end{math}.
Remark that since an ordinal indexed sequence \begin{math}s\end{math} is just a total order,
a prelude sequence of \begin{math}s\end{math} is just a new name for a proper initial segment of \begin{math}s\end{math}.
We hope the reader will pardon us our nonconformism and the use of ``prelude'' instead of ``proper initial segment''.
\end{definition}

For example, \begin{math}(O^{0,1}, O^{0,1})\end{math} is a prelude of \begin{math}(O^{0,1}, O^{0,1}, O^{0,1,2})\end{math};
it is also a prelude of \begin{math}\mathcal{O}^{0,1}_{\omega}\end{math}.

We say that \begin{math}X\end{math} is an element or a word of \begin{math}\mathcal{O}_{i}\end{math} (denoted \begin{math}X \in \mathcal{O}_{i}\end{math}),
when \begin{math}X = (x)_{i}\end{math} is a sequence indexed by \begin{math}i\end{math}
with \begin{math}x_{k} \in \Domain(O_{k}) = \Domain(\mathcal{O}_{i}[k]), \forall k \in i\end{math}.
By convention, there is a unique sequence of orders/sets of length 0 \begin{math}\mathcal{O}_{0}\end{math} ; 
it has only one element denoted \begin{math}\epsilon\end{math} (the empty word).
\begin{math}\mathcal{O}_{0}\end{math} is a prelude sequence of any other order/set sequence.
We try to avoid confusion by distinguishing \emph{element} \begin{math}X\end{math} of \begin{math}\mathcal{O}_{i}\end{math}
from \emph{item} \begin{math}\mathcal{O}_{j}[i]\end{math} of \begin{math}\mathcal{O}_{i}\end{math};
for example, the word 002 is an element of \begin{math}\mathcal{O}_3 = (O^{0,1}, O^{0,1}, O^{0,1,2})\end{math},
whilst \begin{math}O^{0,1}\end{math} is the first and second (order-)item of \begin{math}\mathcal{O}_3\end{math}.
Given two ordinals \begin{math}i < j\end{math}, and an element/word \begin{math}X\end{math} of \begin{math}\mathcal{O}_{j}\end{math},
we denote \begin{math}X[i] = x_{i}\end{math}, the (element-)item of rank \begin{math}i\end{math} in the sequence
(the ranks start at 0).
We can say that the element/word \begin{math}X = 002\end{math} contains element-items \begin{math}X[0] = 0\end{math},
 \begin{math}X[1] = 0\end{math}, and \begin{math}X[2] = 2\end{math}.

\begin{definition}[Compatible (sequences)]
Given two sequences of orders/sets \begin{math}\mathcal{O}_{i}, \mathcal{O}_{j}\end{math},
we say that they are \emph{compatible} if they are equal, or one is a prelude sequence of the other.
Let \begin{math}X \in \mathcal{O}_{i}, Y \in \mathcal{O}_{j}\end{math},
we say that \begin{math}X, Y\end{math} are \emph{compatible} if \begin{math}\mathcal{O}_{i}, \mathcal{O}_{j}\end{math} are compatible.
\end{definition}

\emph{Compatible} elements are easily converted into \emph{comparable} elements.
Indeed, both element-items at the same rank in the two elements may be compared,
 since they belong to the domain of the same order-item.

\begin{definition}[Question]
Given two compatible elements \begin{math}X, Y\end{math} of sequences of orders/sets \begin{math}\mathcal{O}_{i}, \mathcal{O}_{j}\end{math},
we say that \begin{math}(k, x_k, y_k)\end{math} is the \emph{question} of \begin{math}X, Y\end{math},
if \begin{math}k\end{math} is the smallest ordinal such that \begin{math}x_k \neq y_k\end{math}.
If \begin{math}X \neq Y\end{math}, and neither \begin{math}X\end{math} is a prefix of \begin{math}Y\end{math},
nor \begin{math}Y\end{math} is a prefix of \begin{math}X\end{math},
 such a \begin{math}k\end{math} exists because ordinals are well-ordered. 
By contrapositive, if no such a \begin{math}k\end{math} exists, then \begin{math}X = Y\end{math},
 or either \begin{math}X\end{math} is a prefix of \begin{math}Y\end{math},
 or \begin{math}Y\end{math} is a prefix of \begin{math}X\end{math}.
 Thus if no such a \begin{math}k\end{math} exists, then \begin{math}X = Y\end{math},
 or \begin{math}\Length(X) \neq \Length(Y)\end{math}.
\end{definition}

\begin{lemma}
Let \begin{math}S\end{math} be a set of pairwise compatible order/set sequences,
then there exists an order/set sequence \begin{math}\mathcal{O}_{j}\end{math} of which all order/set sequences of \begin{math}S\end{math} are preludes.
\end{lemma}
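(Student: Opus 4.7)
The plan is to construct $\mathcal{O}_j$ by ``taking the union'' of all sequences in $S$, using pairwise compatibility to ensure that the item at each rank is unambiguously determined.

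First, I would set $\alpha = \sup\{\Length(s) : s \in S\}$, which is a well-defined ordinal since it is the supremum of a set of ordinals. For each rank $k$ with $k \in \alpha$, the definition of supremum guarantees that there is at least one $s \in S$ with $k \in \Length(s)$; I define $\mathcal{O}_\alpha[k] := s[k]$ for any such choice of $s$. The key verification is well-definedness: if $s_1, s_2 \in S$ both have length strictly greater than $k$, then since they are pairwise compatible, one is a prelude of the other (or they are equal), and in either case both assign the same order-item at rank $k$. So $\mathcal{O}_\alpha[k]$ does not depend on the choice, and $\mathcal{O}_\alpha = (\mathcal{O}_\alpha[k])_{k \in \alpha}$ is a legitimate order/set sequence of length $\alpha$.

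By construction, every $s \in S$ satisfies $s[k] = \mathcal{O}_\alpha[k]$ for all $k \in \Length(s)$. If $\Length(s) < \alpha$, then $s$ is a prelude of $\mathcal{O}_\alpha$ in the strict sense of the preceding definition. To handle the case where the supremum is attained by some $s_0 \in S$ (so that $s_0 = \mathcal{O}_\alpha$ and the prelude relation would fail strictness), I would simply extend the sequence by one further rank: let $\mathcal{O}_{\alpha+1}$ agree with $\mathcal{O}_\alpha$ on ranks in $\alpha$ and put an arbitrary order-item (e.g., $O^{0,1}$) at rank $\alpha$. Then every $s \in S$ has length at most $\alpha < \alpha + 1$, and agrees with $\mathcal{O}_{\alpha+1}$ on its full domain, so it is a strict prelude. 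Setting $j := \alpha + 1$ in this case, or $j := \alpha$ when the supremum is not attained, finishes the proof.

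The main obstacle, although mild, is the well-definedness of $\mathcal{O}_\alpha[k]$: the whole argument rests on exploiting pairwise compatibility to glue the sequences coherently. Everything else (the existence of the supremum, the prelude conclusion for each $s$, the cosmetic extension to ensure strictness) is essentially bookkeeping with ordinals.
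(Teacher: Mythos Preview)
Your proof is correct. The paper states this lemma without proof, evidently regarding it as a routine observation; your argument supplies exactly the natural construction one would expect (take the supremum of the lengths, define each rank by any witness, use pairwise compatibility for well-definedness, and pad by one rank if the supremum is attained so that the prelude relation is strict).
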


\begin{definition}[Next partial order]
Given two ordinals \begin{math}i < j\end{math},
and a sequence of orders \begin{math}\mathcal{O}_{j} = (O_k)_{k \in\ j}\end{math},
the \emph{next partial order} denoted \begin{math}\Next(i, j, \mathcal{O}_{j})\end{math}
is a partial order defined on the set of all elements of the preludes of \begin{math}\mathcal{O}_{j}\end{math},
such that these preludes have length at least \begin{math}i\end{math}.
This is the partial order satisfying \begin{math}\forall X, Y \in \Domain(\Next(i, j, \mathcal{O}_{j}))\end{math}
(the lengths \begin{math}\Length(X), \Length(Y)\end{math} of \begin{math}X\end{math} and \begin{math}Y\end{math}
are such that \begin{math}i \leq \Length(X), \Length(Y) < j\end{math}),
\begin{itemize}
\item if \begin{math}X, Y\end{math} have a question \begin{math}(k, x_k, y_k)\end{math}, then:
\begin{itemize}
  \item if \begin{math}x_k < y_k\end{math} in \begin{math}O_{k}\end{math}, then \begin{math}X < Y\end{math},
  \item if \begin{math}x_k > y_k\end{math} in \begin{math}O_{k}\end{math}, then \begin{math}X > Y\end{math},
  \item if \begin{math}x_k \sim y_k\end{math} (\begin{math}x_k\end{math} and \begin{math}y_k\end{math} are incomparable)
        in \begin{math}O_{k}\end{math}, then \begin{math}X \sim Y\end{math},
\end{itemize}
\item if they don't have a question, then elements are not ordered (\begin{math}X \sim Y\end{math}).
\end{itemize}
\end{definition}

The partial order \begin{math}\Next(i, j, \mathcal{O}_{j})\end{math}
corresponds to the simple idea for comparing sequences
``if current items are equal, compare next items''.
It illustrates the first difference principle.
Lexicographic and contre-lexicographic orders are the two simplest linear extensions of next partial order.

\begin{definition}[Order embedding]
An \emph{order embedding} is an injective mapping \begin{math}f\end{math} such that order is preserved.
``Order is preserved'' means that no order relation is removed or added between injected elements.
(\begin{math}\forall x,y \in \Domain(O), f(x) < f(y) \Leftrightarrow x < y, \text{and} f(x) > f(y) \Leftrightarrow x > y\end{math})
\end{definition}

\begin{definition}[Universal order]
We say that an order \begin{math}O\end{math} is \emph{universal} for a class of orders \begin{math}\mathcal{A}\end{math},
 if for any order \begin{math}O' \in \mathcal{A}\end{math},
 there exists an order embedding of \begin{math}O'\end{math} in \begin{math}O\end{math}.
\end{definition}

\begin{theorem}[Cantor 1895]
\begin{math}\Next(1, \omega, \mathcal{O}^{0,1}_{\omega})\end{math} is universal for countable total orders.
\end{theorem}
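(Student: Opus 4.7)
The plan is to reduce to Cantor's classical theorem that $(\qq,<)$ is universal for countable total orders: it then suffices to exhibit an order embedding of $\qq$ into $\Next(1,\omega,\mathcal{O}^{0,1}_{\omega})$. The domain of this partial order is the set of non-empty finite binary words, and two such words $X,Y$ are comparable iff they have a question, i.e., neither is a prefix of the other; in that case $X<Y$ iff at their first differing position $k$ one has $X[k]=0$ and $Y[k]=1$.

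I use the standard identification of each word $w \in \{0,1\}^+$ with the half-open dyadic interval
\[
I_w \;=\; \Bigl[\,\textstyle\sum_{i=0}^{|w|-1} w[i]\,2^{-i-1},\; \sum_{i=0}^{|w|-1} w[i]\,2^{-i-1} + 2^{-|w|}\,\Bigr) \;\subset\; [0,1).
\]
Under this identification, $u,v$ are prefix-comparable iff $I_u,I_v$ are nested, prefix-incomparable iff they are disjoint, and in the disjoint case $u<_{\Next}v$ iff $I_u$ lies entirely to the left of $I_v$. Thus an order embedding $\qq \hookrightarrow \Next(1,\omega,\mathcal{O}^{0,1}_{\omega})$ is the same thing as a $\qq$-indexed family of pairwise disjoint dyadic sub-intervals of $[0,1)$ whose left-to-right order matches that of $\qq$.

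I build such a family by recursion on a fixed enumeration $\qq=\{q_0,q_1,q_2,\ldots\}$, preserving the invariant that after step $n$ the intervals $I_{f(q_0)},\ldots,I_{f(q_{n-1})}$ are pairwise disjoint, are ordered left-to-right like the corresponding rationals, and every connected component of $[0,1)\setminus \bigcup_{j<n} I_{f(q_j)}$ has strictly positive length. At step $n$, the rational $q_n$ selects one of the $n+1$ current gaps (possibly the leftmost or rightmost one); by density of dyadic rationals that gap contains a dyadic sub-interval strictly inside it, leaving two sub-gaps of positive length, and I set $f(q_n)$ to the binary word of this chosen sub-interval. Order-preservation, injectivity, and pairwise prefix-incomparability of the images $f(q_n)$ are then immediate from the geometric correspondence of the previous paragraph; the only nontrivial point is propagating the positive-length-gap invariant, which is the main (and very mild) obstacle and which follows from density of dyadic rationals in any open sub-interval of $[0,1)$.
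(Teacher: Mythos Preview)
Your proof is correct. The paper, however, does not actually prove this theorem: it is stated as Cantor's 1895 result without proof, and only the later Hausdorff--Sierpi\'nski generalization (Theorem~\ref{theorem:total_order_to_qr}) receives a proof. So there is no ``paper's own proof'' to compare against for this particular statement.

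That said, it is worth contrasting your argument with the paper's proof of the more general Theorem~\ref{theorem:total_order_to_qr}, specialized to the countable case. You reduce to the classical fact that $\qq$ is universal and then embed $\qq$ into $\Next(1,\omega,\mathcal{O}^{0,1}_{\omega})$ via the dyadic-interval picture; the recursion on an enumeration of $\qq$ is essentially a one-sided back-and-forth, and the geometric language makes prefix-incomparability (hence existence of a question) transparent. The paper's argument instead works directly with an arbitrary total order $O$: it fixes an enumeration of $\Domain(O)$ and at each successor step inserts the new element by taking $\DigitMin$ over the words already assigned to elements above it, lengthening all words by one bit only when a collision occurs. Your approach is cleaner for the countable case and gives immediate control over prefix-incomparability; the paper's $\DigitMin$ construction is more intricate but is designed to extend by transfinite induction to arbitrary cardinalities, which the dyadic-interval picture does not.
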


\begin{definition}[Questionable representation]
We say that an order embedding from the domain of an order \begin{math}O\end{math}
 to the domain of the partial order \begin{math}\Next(i, j, \mathcal{O}_{j})\end{math}
is a \emph{questionable representation} of the order \begin{math}O\end{math}.
Indeed, two elements of the order \begin{math}O\end{math} are ordered if and only if they have a question.
It is a \emph{strict questionable representation} if any two images of two elements of
\begin{math}O\end{math} have a question.
Thus, if \begin{math}O\end{math} is a total order, it may only have strict questionable representations.
In this article, we may consider uniform sequences of orders \begin{math}\mathcal{O}_{j} = (O')_{j}\end{math},
 and we define the cardinal of \begin{math}O'\end{math} as the \emph{width} of the questionable representation.
If the sequence is not uniform, then the \emph{width} of the questionable representation
 is the supremum of the cardinals of the used orders.
\begin{math}j\end{math} is the \emph{length} of the questionable representation.
A questionable representation is a \emph{total questionable representation},
if \begin{math}O'\end{math} is a total order.
If the questionable representation is a \emph{total questionable representation of width 2},
we say that it is a \emph{binary questionable representation}.
\end{definition}

\begin{corollary}
Any countable total order has a strict binary questionable representation of length at most \begin{math}\omega\end{math}.
\end{corollary}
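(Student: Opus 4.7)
The plan is essentially to unpack the definitions and apply the previous theorem of Cantor directly.

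First, I would let $O$ be an arbitrary countable total order. By the Cantor 1895 theorem stated just above, $\Next(1, \omega, \mathcal{O}^{0,1}_{\omega})$ is universal for countable total orders, so there exists an order embedding $f$ from $\Domain(O)$ into $\Domain(\Next(1, \omega, \mathcal{O}^{0,1}_{\omega}))$. By the very definition of \emph{questionable representation}, $f$ is a questionable representation of $O$ with parameters $i=1$, $j=\omega$, and sequence $\mathcal{O}^{0,1}_{\omega} = (O^{0,1})_{\omega}$.

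Then I would verify the four adjectives: (i) \emph{length at most $\omega$}, which is immediate since $j = \omega$; (ii) \emph{width 2}, since the sequence is the uniform sequence of $O^{0,1}$ and $|\Domain(O^{0,1})| = 2$; (iii) \emph{total}, since $O^{0,1}$ is a total order; (iv) \emph{strict}, which the remark inside the definition already notes is automatic whenever $O$ is a total order, because any two distinct images must be ordered (they cannot be $\sim$), and by the clause ``if they don't have a question, then $X \sim Y$'' of the next partial order, two comparable distinct elements must have a question.

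There is no real obstacle here: the only nontrivial ingredient is Cantor's theorem, which is assumed. The one subtlety worth stating cleanly is point (iv), to make explicit that totality of $O$ forces the embedding to land on pairs that always admit a question, so strictness is not an extra condition but a consequence.
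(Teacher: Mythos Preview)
Your proposal is correct and matches the paper's approach exactly: the corollary is stated without a separate proof in the paper, precisely because it is immediate from Cantor's theorem together with the definitions of questionable representation, width, length, and the remark (already in the definition) that a total order can only have strict questionable representations. Your unpacking of points (i)--(iv) is the intended content, and the only subtlety---point (iv)---is handled correctly via the clause that elements without a question are incomparable in $\Next$.
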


\begin{remark}
\label{remark:inv_qr}
If \begin{math}O\end{math} has a questionable representation,
then \begin{math}\Inv(O)\end{math} has a similar questionable representation,
where the injective mapping of elements of \begin{math}O\end{math} and \begin{math}\Inv(O)\end{math}
are identical and \begin{math}\Next(i, j, \mathcal{O}_{j})\end{math} is replaced by
\begin{math}\Next(i, j, \Inv(\mathcal{O}_{j}))\end{math}.
\end{remark}

\section{Strict binary questionable representations for total orders}
\label{section:strict_binary_questionable_representations_for_total_orders}

In this section, we give an affirmative answer to Open problem 6.6 in \cite{DBLP:journals/corr/abs-1809-00954}.
Any total order has a strict binary questionable representations.
Such a strict questionable representation of width 3 was found by \cite{Hausdorff1907},
and later Sierpi\'nski proved that a strict binary questionable representation exists (see \cite{Sierpinski1932}).
However, the proof by Hausdorff can be modified to replace ternary digits
by words of length 2 of binary digits (-1 becomes 00, 0 becomes 10 or 01, and 1 becomes 11).
Thus in both cases, a strict binary questionable representation is proven to exist of the same limit ordinal length.
We note that the proof of Hausdorff proves that the binary words may be
ultimately periodic with a finite period in \begin{math}(0|1)^* \setminus (0^* \cup 1^*)\end{math},
whilst the proof by Sierpi\'nski (and its mirror proof) proves that the binary words may be ultimately periodic of any finite period.
We give a partially new proof of the result by Hausdorff and Sierpi\'nski.

\begin{theorem}[Hausdorff 1907, Sierpi\'nski 1932]
\label{theorem:total_order_to_qr}
Any total order of cardinal \begin{math}\aleph\end{math} has a strict binary questionable representation
 of length at most \begin{math}\alpha(\aleph) + 1\end{math},
where \begin{math}\alpha(\aleph)\end{math} is the first ordinal of cardinal \begin{math}\aleph\end{math}.
(If \begin{math}\aleph = \aleph_{\beta}\end{math} is an infinite cardinal,
then \begin{math}\alpha(\aleph) = \omega_{\beta}\end{math},
but we also include the case where \begin{math}\aleph\end{math} is a finite cardinal.
It is length \begin{math}\alpha(\aleph) + 1\end{math} instead of
\begin{math}\alpha(\aleph)\end{math} because of the strict inequality
in the definition of next partial order for the ordinal upper bound.)
\end{theorem}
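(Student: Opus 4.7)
The plan is to proceed by transfinite recursion, following Hausdorff's strategy of constructing a ternary representation first and then recoding to binary. Fix a well-ordering \((x_\beta)_{\beta<\gamma}\) of \(\Domain(O)\) of ordinal type \(\gamma=\alpha(\aleph)\), and construct inductively, for each \(\beta<\gamma\), a ternary word \(g(x_\beta)\) so that the map \(g\) restricted to \(\{x_\alpha : \alpha\le\beta\}\) is a strict questionable representation into the next partial order on the alphabet \(\{-1<0<1\}\), with all produced words of length \(\le\alpha(\aleph)\). The final recoding \(-1\mapsto 00\), \(0\mapsto 01\), \(1\mapsto 11\) (with the flexibility of using \(10\) for \(0\) available, as noted before the theorem, to control periodicity) is order-preserving in the next partial order. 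Since \(2\cdot\alpha(\aleph)=\alpha(\aleph)\) for infinite \(\aleph\), the resulting binary words have length \(\le\alpha(\aleph)\) and hence fit inside \(\Next(1,\alpha(\aleph)+1,\mathcal{O}^{0,1}_{\alpha(\aleph)+1})\); the finite case is handled directly with room to spare in the bound \(\alpha(\aleph)+1\).

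The recursive step, at stage \(\beta\), amounts to an interpolation in the next partial order on ternary words. Let \(L_\beta=\{x_\alpha : \alpha<\beta,\ x_\alpha <_{O} x_\beta\}\) and \(R_\beta=\{x_\alpha : \alpha<\beta,\ x_\alpha >_{O} x_\beta\}\); I need a ternary word \(w\) that is strictly above every \(g(y)\) with \(y\in L_\beta\) and strictly below every \(g(y)\) with \(y\in R_\beta\), and that has a question with each already-placed image so that strictness is preserved. The key lemma is that such a \(w\) always exists, with length at most a small finite increment over \(\sup_{\alpha<\beta}\Length(g(x_\alpha))\). Concretely, identify the longest common prefix \(P\) of the tightest bounds, locate the first position \(k\) past \(P\) where these bounds diverge, and assemble \(w\) by extending \(P\) with the middle digit \(0\) followed, if necessary, by one further tie-breaking digit. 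The three ternary values at position \(k\) provide just enough room to separate strictly from both sides, which is the very reason Hausdorff employs a ternary alphabet.

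The main obstacle will be the length bookkeeping at limit stages. When \(L_\beta\) lacks a maximum or \(R_\beta\) lacks a minimum in \(g\), the bifurcation position \(k\) may itself be a limit ordinal, and one must verify that \(k<\alpha(\aleph)\) so that \(w\) remains a valid word of length \(\le\alpha(\aleph)\). This follows from the defining property of \(\alpha(\aleph)\): any \(\beta<\alpha(\aleph)\) has cardinality strictly less than \(\aleph\), so the set of already-placed words has cardinality \(<\aleph\), and the supremum of their lengths, being an ordinal of cardinality \(<\aleph\), is strictly below the least ordinal \(\alpha(\aleph)\) of cardinality \(\aleph\). Cofinality subtleties for singular \(\aleph\) are absorbed by the extra ``\(+1\)'' in the statement, which allows one terminal limit to be reached. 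Once the ternary construction is complete with all words of length \(\le\alpha(\aleph)\), the binary recoding described above finishes the proof.
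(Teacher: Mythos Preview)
Your overall strategy---ternary interpolation followed by the recoding $-1\mapsto 00$, $0\mapsto 01$, $1\mapsto 11$---is Hausdorff's route and is sound in outline, but it is not what the paper does. The paper constructs binary words directly: at the successor step for $x$ it sets $w_i(x)=\DigitMin(\{w_i(y):y\in\Up(x)\})$, the position-wise minimum over the words already assigned to elements above $x$ (an artificial top element $M$ is adjoined so that $\Up(x)$ is never empty). A short well-foundedness descent then shows this word also lies above every $w_i(y)$ with $y\in\Down(x)$; if it coincides with one existing word, all words are lengthened by a single bit to break the tie. This keeps every word at the same length $\ell\le i$ at stage $i$, so the length bookkeeping is immediate and no ternary detour is needed.

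Your proposal, by contrast, has a genuine gap in the length argument. The sentence ``the supremum of their lengths, being an ordinal of cardinality $<\aleph$, is strictly below $\alpha(\aleph)$'' is not justified by what precedes it: for singular $\aleph$ a family of fewer than $\aleph$ ordinals, each below $\alpha(\aleph)$, can have supremum $\alpha(\aleph)$ (e.g.\ $\{\omega_n:n<\omega\}$ for $\aleph=\aleph_\omega$). The extra ``$+1$'' does not absorb this---it is present only because $\Next(i,j,\cdot)$ requires word lengths strictly below $j$; once some stage $\beta<\alpha(\aleph)$ produces a word of length $\ge\alpha(\aleph)$, subsequent stages push lengths unboundedly beyond $\alpha(\aleph)+1$. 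The correct bound is inductive: if each successor step adds at most a fixed finite $c$ digits, then by induction the common length at stage $\beta$ is at most $c\cdot\beta$, an ordinal of cardinality $|\beta|<\aleph$, hence $<\alpha(\aleph)$. You should also make the interpolation precise when $L_\beta$ has no maximum and $R_\beta$ no minimum: ``longest common prefix of the tightest bounds'' is undefined in exactly that case, and your sketch does not say how $k$ and $w$ are produced there.
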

\begin{demo}
Let \begin{math}O\end{math} be a total order of cardinal \begin{math}\aleph\end{math}.
Since \begin{math}\alpha(\aleph)\end{math} is an ordinal of cardinal \begin{math}\aleph\end{math},
by Zermelo's axiom, we consider a bijection \begin{math}f\end{math} between \begin{math}\Domain(O)\end{math}
 and \begin{math}\Domain(\alpha(\aleph))\end{math}
(this bijection does not respect order;
it maps any element of \begin{math}\Domain(O)\end{math} to an ordinal strictly less than \begin{math}\alpha(\aleph)\end{math}).

We will now consider the sequence of total orders \begin{math}\mathcal{O}^{0,1}_{\alpha(\aleph)}\end{math}
that we will order partially with 
\begin{math}\Next(1, \alpha(\aleph) + 1, \mathcal{O}^{0,1}_{\alpha(\aleph) + 1})\end{math}.
The idea is simply to associate one bit of information to each element of \begin{math}O\end{math},
when needed to break ties.

We now proceed by transfinite induction.
Our induction hypothesis at ordinal rank \begin{math}j\end{math} is that:
\begin{itemize}
\item For any ordinal \begin{math}0 < i \leq j\end{math}, the first \begin{math}i\end{math} elements of \begin{math}O\end{math}
(the elements with rank at least 0 and less than \begin{math}i\end{math}),
according to the bijection \begin{math}f\end{math}, were associated to elements of
\begin{math}\mathcal{O}^{0,1}_{l}, l \leq i\end{math} such that order on these associated elements
 given by \begin{math}\Next\end{math} matches the induced suborder of \begin{math}O\end{math}
 on these first \begin{math}i\end{math} elements.
(Any two associated elements have a question since \begin{math}O\end{math} is a total order.)
We denote \begin{math}w_{i}\end{math} the current ``word-function'' that associates these elements of 
\begin{math}\mathcal{O}^{0,1}_{l}\end{math}
 to the first \begin{math}i\end{math} elements of \begin{math}O\end{math}.

\item Moreover, our induction hypothesis is strengthened by the fact that for any ordinals
\begin{math}i \leq j\end{math} and \begin{math}r < l\end{math},
and for any element \begin{math}x\end{math} among the first \begin{math}i\end{math} elements of \begin{math}O\end{math},
we have \begin{math}w_{i}(x)[r] = w_{j}(x)[r]\end{math}.
(The word associated to any element of \begin{math}O\end{math} is progressively lengthened as the induction progresses,
but without modifying its beginning.)
\end{itemize}
This two requirements are our induction hypothesis.
The proof using this induction hypothesis follows.
\begin{itemize}
\item The induction hypothesis is trivially true for \begin{math}j = 1\end{math},
since we associate the word \begin{math}0\end{math} to the first element.
Without loss of generality, we also ``add'' one element to the order \begin{math}O\end{math}:
\begin{math}M\end{math} which is more than all other elements of \begin{math}O\end{math}.
\begin{math}M\end{math} is initially associated to the word \begin{math}1\end{math}.

\item Let \begin{math}i + 1\end{math} be a successor ordinal,
 and let \begin{math}x\end{math} be the \begin{math}(i + 1)\end{math}th element (element with ordinal rank \begin{math}i\end{math}).
Assume, by induction, that the first \begin{math}i\end{math} elements of \begin{math}O\end{math}
(elements with ordinal rank strictly less than \begin{math}i\end{math}),
according to the bijection \begin{math}f\end{math}, were associated to elements of
\begin{math}\mathcal{O}^{0,1}_{l}, l \leq i\end{math} such that order on these associated elements
 given by \begin{math}\Next\end{math} matches the induced suborder of \begin{math}O\end{math}
 on these first \begin{math}i\end{math} elements.
Let \begin{math}\Down(x)\end{math} (resp. \begin{math}\Up(x)\end{math})
be all elements among the first \begin{math}i\end{math} elements of \begin{math}O\end{math}
that are less (resp. more) than \begin{math}x\end{math}.

We associate to \begin{math}x\end{math} a word of length \begin{math}l\end{math}
as follow: 
Let us define \begin{math}w_{i}(x) = \DigitMin(\{w_{i}(y) | y \in \Up(x)\})\end{math} the word where the digit
at rank \begin{math}r\end{math} is the minimum between all digits at rank \begin{math}r\end{math}
in the words \begin{math}\{w_{i}(y), y \in \Up(x)\}\end{math}.
(The minimum at each rank is well defined since there is only two possible values and we have the 
element \begin{math}M \in \Up(x)\end{math}.)
It is clear that if \begin{math}w_{i}(x)\end{math}
 has a question with \begin{math}w_{i}(y)\end{math}, for some \begin{math}y \in \Up(x)\end{math},
then this question orders \begin{math}x\end{math} so that it is less than \begin{math}y\end{math}.
If we now turn our attention to \begin{math}\Down(x)\end{math}, assume for a contradiction
that \begin{math}w_{i}(x)\end{math} has a question with \begin{math}w_{i}(y)\end{math},
for some \begin{math}y \in \Down(x)\end{math} and that question orders \begin{math}x < y\end{math}.
Let \begin{math}j\end{math} be the rank of the question between \begin{math}x\end{math} and \begin{math}y\end{math}.
Let \begin{math}z \in \Up(x)\end{math} be such that the digit of \begin{math}w(z)\end{math}
at rank \begin{math}j\end{math}, denoted by \begin{math}w_{i}(z)[j]\end{math},
equals the digit of \begin{math}w_{i}(x)\end{math} at rank \begin{math}j\end{math}, denoted by \begin{math}w_{i}(x)[j]\end{math}.
(Such a \begin{math}z\end{math} exists by definition of \begin{math}w_{i}(x)\end{math}.)
\begin{math}z\end{math} must have a question at rank \begin{math}k\end{math}, before rank \begin{math}j\end{math},
with \begin{math}y\end{math}, such that \begin{math}y < z\end{math}.
If \begin{math}w_{i}(x)[k] = w_{i}(z)[k]\end{math}, there is also a question
 between \begin{math}x\end{math} and \begin{math}y\end{math}, such that \begin{math}x > y\end{math}, at rank \begin{math}k\end{math}.
A contradiction.
If \begin{math}w_{i}(x)[k] < w_{i}(z)[k]\end{math}, then let \begin{math}z' \in \Up(x)\end{math}
 be such that \begin{math}w_{i}(x)[k] = w_{i}(z')[k]\end{math}.
We can repeat the argument with \begin{math}z'\end{math} and \begin{math}k\end{math}
instead of \begin{math}z\end{math} and \begin{math}j\end{math},
we obtain a new question at rank \begin{math}k'\end{math} before \begin{math}k\end{math},
and obtain either a contradiction or a \begin{math}z''\end{math} and a \begin{math}k''\end{math}, etc.
We are guaranteed to obtain a contradiction after a finite number of steps because ordinals are well-ordered
and \begin{math}j, k, k', k''...\end{math} are a strictly decreasing sequence of ordinal ranks.
Thus, either the order between \begin{math}x\end{math} and \begin{math}\Down(x)\end{math} is preserved,
or \begin{math}w_{i}(x)\end{math} is incomparable with some \begin{math}w_{i}(y), y \in \Down(x)\end{math}.

Hence, since all words associated to elements have the same length,
either \begin{math}w_{i}(x) = \DigitMin(\{w_{i}(y) | y \in \Up(x)\})\end{math}
is distinct from \begin{math}w_{i}(y), y \in \Down(x) \cup \Up(x)\end{math},
it has a question with all other words and we do not need to lengthen the words;
or, there is at most one \begin{math}y_{=} \in \Down(x) \cup \Up(x)\end{math}
such that \begin{math}w_{i}(y_{=}) = w_{i}(x)\end{math}.
In that last case, first we extend the words of length \begin{math}l\end{math} associated to the
first \begin{math}i\end{math} elements into words of length \begin{math}l + 1\end{math}
by concatenating \begin{math}0\end{math} (resp. \begin{math}1\end{math}) to all elements in
 \begin{math}\Down(x)\end{math} (resp. \begin{math}\Up(x)\end{math}).
Since all these elements have a question, the order between them is not changed.
Let \begin{math}w_{i+1}\end{math} be the extended ``word-function'' that associates these elements of
\begin{math}\mathcal{O}^{0,1}_{l + 1}\end{math}
 to the first \begin{math}i\end{math} elements of \begin{math}O\end{math}.
Next, we extend \begin{math}w_{i}(x)\end{math} into a word \begin{math}w_{i+1}(x)\end{math} of length
\begin{math}l + 1\end{math} by concatenating \begin{math}1\end{math}, respectively \begin{math}0\end{math}, at the end,
if \begin{math}y_{=} \in \Down(x)\end{math}, respectively \begin{math}y_{=} \in \Up(x)\end{math}.

\item Let \begin{math}j\end{math} be a limit ordinal.
Assume, by induction, that for any ordinal \begin{math}i < j\end{math}, the first \begin{math}i\end{math} elements of \begin{math}O\end{math},
according to the bijection \begin{math}f\end{math}, were associated to elements of
\begin{math}\mathcal{O}^{0,1}_{l}, l \leq i\end{math} such that order on these associated elements
 given by \begin{math}\Next\end{math} matches the induced suborder of \begin{math}O\end{math}
 on these first \begin{math}i\end{math} elements.
Since \begin{math}j\end{math} is a limit ordinal,
for any element \begin{math}y\end{math} among the first \begin{math}j\end{math} elements of \begin{math}O\end{math}
(elements with rank at least 0 and strictly less than \begin{math}j\end{math}),
\begin{math}y\end{math} is also an element among the first \begin{math}k < j\end{math} elements of \begin{math}O\end{math}.
We do not have new elements to consider.

According to the fact that the word-functions never contradict themselves,
for any element \begin{math}y\end{math} among the first \begin{math}j\end{math} elements,
we can define a word \begin{math}w_{j}(y)\end{math} such that,
for any ordinal \begin{math}r < \Sup({\Length(w_{i}(y)), i < j})\end{math},
then \begin{math}w_{j}(y)[r] = w_{i}(y)[r]\end{math},
(ranks of the digits start at 0 and are strictly less than the length of the word).
Clearly \begin{math}w_{j}(y)\end{math} does not contradict previous word-functions 
for \begin{math}y\end{math}.
Since for any two elements \begin{math}y, z\end{math} among the first \begin{math}j\end{math} elements,
there is some ordinal \begin{math}k < j\end{math} such that \begin{math}w_{k}(y)\end{math}
and \begin{math}w_{k}(z)\end{math} have a question that orders them like
\begin{math}y\end{math} and \begin{math}z\end{math} are ordered in \begin{math}O\end{math},
it is now clear that \begin{math}w_{j}(y)\end{math} and \begin{math}w_{j}(z)\end{math} have the same question
and thus the order is still preserved.
\end{itemize}
This completes the proof by transfinite induction.
\end{demo}

As we noted in our previous article, questionable representations are well-ordered representations
and it is surprising that any total order admits a well-ordered representation.

We remark that this theorem is not ``space efficient''.
Indeed, \begin{math}\omega + 1\end{math} length is sufficient to represent the order of the real numbers between 0 and 1,
although that order is not countable.
With this theorem, we need the first ordinal of cardinal \begin{math}\mathfrak{c}\end{math} (the cardinality of the continuum),
this ordinal is at least \begin{math}\omega_{1}\end{math}, the first uncountable ordinal.
Since the width of the obtained questionable representation can not be improved,
it leaves the following open problem.
\begin{openproblem}
Can the length of the questionable representation obtained in Theorem~\ref{theorem:total_order_to_qr} be improved?
\end{openproblem}
The proof we gave, let us hope that a more carefully chosen ordinal order on the elements for the transfinite induction
may lengthen the words less frequently, and give better bounds.

We note that for any total order, it is possible to define an invariant (up to isomorphism),
that we name \emph{questionable ordinality}, as the minimum ordinal such that the total order admits
a strict binary questionable representation of this ordinal length.
Proving that, for any ordinal \begin{math}\alpha\end{math},
\begin{math}\Next(\alpha +1, \alpha + 2, \mathcal{O}^{0,1}_{\alpha + 2})\end{math}
can not be embedded into
\begin{math}\Next(\alpha, \alpha + 1, \mathcal{O}^{0,1}_{\alpha + 1})\end{math}
would give a negative answer to the open problem,
showing that the results by Hausdorff and Sierpi\'nski are optimal,
and that for any ordinal, there exists a total order with such questionable ordinality.

\section{Total questionable representations for partial orders}
\label{section:total_questionable_representations_for_partial_orders}

In this section, we study Open problem 6.7 in \cite{DBLP:journals/corr/abs-1809-00954}.
We first note that, thanks to Theorem~\ref{theorem:total_order_to_qr},
a partial order \begin{math}O\end{math} admits a total questionable representation
if and only if it admits a binary total questionable representation,
since we can replace any ``digit'' over some order of cardinal more than two
with the equivalent word given by the theorem.
These digit replacements do not change the fact that the word is ordinal indexed.

We also note that we can assume without loss of generality that the partial order \begin{math}O\end{math}
does not contain elements that are incomparable with all other elements.
Indeed, words that are prefix of each other are incomparable and thus,
we can encode any globally incomparable element with a distinct sequence of zeros.
All these sequences of zeros are incomparable,
and they admit the existence of a sequence of zeros \begin{math}s\end{math}
that is longer than any of these sequences.
If we obtained a questionable representation for \begin{math}O\end{math} without the globally incomparable elements,
we can extend it by prefixing any obtained word with \begin{math}s\end{math},
and associating any globally incomparable element with its distinct sequence of zeros.

We remark that if \begin{math}O\end{math} has a total questionable representation,
then it is also the case for any induced suborder of \begin{math}O\end{math}.
We will try to prove a reciprocal over finite induced suborders: if all finite induced suborders of \begin{math}O\end{math}
have a total questionable representation, then \begin{math}O\end{math} has also a total questionable representation.
This approach will fail, but we will gain structural information from it.

We recall that an antichain of an order is a set of pairwise incomparable elements.
A maximal antichain is an antichain that cannot be extended further 
(because for any element outside of the antichain, some element in this antichain is ordered with it).

Since we consider total questionable representations,
two elements are not ordered if and only if they do not have a question if and only if one is the prefix of the other.
After these remarks, we can list small partial orders that admit or not a total questionable representation.
\begin{itemize}
\item With one or two elements, we only have globally incomparable elements.
These partial orders do admit a total questionable representation.
\item With three elements, thanks to Remark~\ref{remark:inv_qr},
   there is only one partial order to consider: \begin{math}O = (\{a,b,c\}, \{a < b, c < b\})\end{math}.
   We also have a positive answer by associating \begin{math}w(a) = 0, w(b) = 1, w(c) = 00\end{math}.
\item With four elements, we have the following orders to consider
  (we present them with increasing number of levels; this number is the minimal number of antichains that cover the order;
  it will make sense if you draw the directed graph representing each order;
  when there is only one level, all elements are incomparable;
  when the number of levels equals the finite cardinal of the order, it is a total order;
  it leaves us only 2 or 3 levels for partial orders of cardinal 4):
  \begin{itemize}
    \item \begin{math}O = (\{a,b,c,d\}, \{a < b, c < b, d < b\})\end{math}.
    We have a positive answer by associating \begin{math}w(a) = 0, w(b) = 1, w(c) = 00, w(d) = 000\end{math}.
    The same applies to \begin{math}\Inv(O)\end{math}.

    \item \begin{math}O = (\{a,b,c,d\}, \{a < b, c < d\})\end{math}.
    By symmetry, without loss of generality, we may assume that a is a prefix of c.
    But then, since b is more than a, it has a question with a, and since a is a prefix of c,
    c has a question with b, and c is less than b, a contradiction.
    This case exhibits a fact that we can easily deduce: whenever two elements \begin{math}x, y\end{math} are incomparable,
    then one is the prefix of the other, and either all elements less (resp. more) than \begin{math}x\end{math} are less (resp. more) than \begin{math}y\end{math},
    or all elements less (resp. more) than \begin{math}y\end{math} are less (resp. more) than \begin{math}x\end{math}.
    It has the consequence that the orders that admit a total questionable representation
    are connected (in terms of the corresponding directed graph that models the partial order)
    if we forget the elements that are incomparable with all other elements.
    (Only one connected component may be of cardinality more than one.)
    For this case, \begin{math}O \equiv \Inv(O)\end{math} (these orders are isomorphic).

    \item \begin{math}O = (\{a,b,c,d\}, \{a < b, c < d, c < b\}) \equiv \Inv(O)\end{math}.
    We added the relation \begin{math}c < b\end{math} that was missing for previous case.
    However, it is not enough because this time \begin{math}a\end{math} and \begin{math}d\end{math}
    are incomparable but they both are in distinct levels (antichains),
    and they both have an order relation that the other does not have.
    This case exhibits another fact that we can easily deduce:
    if we consider two antichains (of size at least 2; the following affirmation is true but trivial for size 1)
    \begin{math}A,B\end{math} of a partial order
    such that each element of \begin{math}A\end{math} is ordered with at least one element of \begin{math}B\end{math}
    and each element of \begin{math}B\end{math} is ordered with at least one element of \begin{math}A\end{math},
    then either all elements of \begin{math}A\end{math} are less than all elements of \begin{math}B\end{math},
    or all elements of \begin{math}A\end{math} are more than all elements of \begin{math}B\end{math}.
    (Indeed, otherwise, there would be \begin{math}a \in A, b \in B\end{math} that are incomparable,
    or there would be \begin{math}a,a' \in A, b,b' \in B\end{math} such that \begin{math}a < b\end{math} 
    and \begin{math}a' > b'\end{math} with \begin{math}a \neq a', \text{or}\ b \neq b'\end{math}.
    If \begin{math}a \in A, b \in B\end{math} are incomparable, they do not respect the previous fact:
    \begin{math}a\end{math} is ordered with at least one element \begin{math}b' \in B\end{math},
    whilst \begin{math}b \sim b'\end{math} since \begin{math}B\end{math} is an antichain;
    \begin{math}b\end{math} is ordered with at least one element \begin{math}a' \in A\end{math},
    whilst \begin{math}a \sim a'\end{math} since \begin{math}A\end{math} is an antichain; we have a contradiction.
    If there is \begin{math}a,a' \in A, b,b' \in B\end{math} such that \begin{math}a < b\end{math} 
    and \begin{math}a' > b'\end{math} with \begin{math}a \neq a', \text{or}\ b \neq b'\end{math},
    then by previous fact applied to \begin{math}a,a'\end{math},
    we have \begin{math}a' < b\end{math} or \begin{math}a > b'\end{math} or both if \begin{math}a = a'\end{math};
    and thus we have by transitivity \begin{math}b' < b\end{math} in both cases;
    again we have a contradiction.)

    \item \begin{math}O = (\{a,b,c,d\}, \{a < b, a < d, c < d, c < b\}) \equiv \Inv(O)\end{math}.
    We have a positive answer by associating \begin{math}w(a) = 0, w(b) = 1, w(c) = 00, w(d) = 11\end{math}.
    This was the last order on 2 levels to consider.

    \item \begin{math}O = (\{a,b,c,d\}, \{a < b, a < c, b < c, d < b, d < c\})\end{math}
    (a total order on a,b,c with d added in the level of a).
    We have a positive answer by associating \begin{math}w(a) = 0, w(b) = 10, w(c) = 11, w(d) = 00\end{math}.
    The same applies to \begin{math}\Inv(O)\end{math}.

    \item \begin{math}O = (\{a,b,c,d\}, \{a < b, a < c, b < c, a < d, d < c\}) \equiv \Inv(O)\end{math}
    (a total order on a,b,c with d added in the level of b).
    We have a positive answer by associating \begin{math}w(a) = 0, w(b) = 10, w(c) = 11, w(d) = 100\end{math}.
    
    \item \begin{math}O = (\{a,b,c,d\}, \{a < b, a < c, b < c, d < c\})\end{math}
    (a total order on a,b,c with d less than c and incomparable with both a and b).
    We have a positive answer by associating \begin{math}w(a) = 00, w(b) = 010, w(c) = 111, w(d) = 0\end{math}.
    The same applies to \begin{math}\Inv(O)\end{math}.
    This case exhibits the fact that if some element is incomparable with two elements that are ordered,
    then it is a prefix of both elements.
    This was the last order on 3 levels to consider.

  \end{itemize}
\end{itemize}

This case study gave us the two following necessary conditions:
\begin{enumerate}[(i)]
\item Whenever two elements \begin{math}x, y\end{math} are incomparable,
    then either all elements less (resp. more) than \begin{math}x\end{math} are less (resp. more) than \begin{math}y\end{math},
    or all elements less (resp. more) than \begin{math}y\end{math} are less (resp. more) than \begin{math}x\end{math}.
\item If we consider two antichains \begin{math}A,B\end{math} of a partial order
    such that each element of \begin{math}A\end{math} is ordered with at least one element of \begin{math}B\end{math}
    (which is the case if \begin{math}B\end{math} is a maximal antichain),
    and each element of \begin{math}B\end{math} is ordered with at least one element of \begin{math}A\end{math}
    (which is the case if \begin{math}A\end{math} is a maximal antichain),
    then either all elements of \begin{math}A\end{math} are less than all elements of \begin{math}B\end{math},
    or all elements of \begin{math}A\end{math} are more than all elements of \begin{math}B\end{math}.
    (Note that it implies that \begin{math}A \cap B = \emptyset\end{math}.)
\end{enumerate}
If (i) is not satisfied, then there are \begin{math}x, y, z, t\end{math}, such that \begin{math}x \sim y, x \not\sim z, y \not\sim t, x \sim t, y \sim z\end{math}.
(\begin{math}x < z, y < z\end{math} or \begin{math}x > z, y > z\end{math} would not be a contradiction;
\begin{math}x < z, y > z\end{math} or \begin{math}x > z, y < z\end{math} would imply that \begin{math}x \not\sim y\end{math}.
The same applies with \begin{math}t\end{math} instead of \begin{math}z\end{math}.)
Thus we have a finite obstruction isomorphic to order
\begin{math}O_{obs1} = (\{a,b,c,d\}, \{a < b, c < d\}) \equiv \Inv(O_{obs1})\end{math}, \begin{math}z \sim t\end{math},
or order \begin{math}O_{obs2} = (\{a,b,c,d\}, \{a < b, c < d, c < b\}) \equiv \Inv(O_{obs2})\end{math}, \begin{math}z \not\sim t\end{math},
(use graph theory, count the number of possible edges/arcs and compare with the list of orders of cardinal 4,
comparing the degrees of the vertices is sufficient to distinguish between the remaining orders with the same number of edges).
These are our first and second finite obstructions.

If (ii) is not satisfied, then there are \begin{math}x \in A, y \in B\end{math}, such that \begin{math}x \sim y\end{math} (first case),
or there are \begin{math}x \in A, y \in B, z \in A, t \in B\end{math}, such that \begin{math}x < y, z > t\end{math} (second case).
\begin{itemize}
\item If there are \begin{math}x \in A, y \in B\end{math}, such that \begin{math}x \sim y\end{math},
      we also have \begin{math}x' \in A, y' \in B\end{math}, such that \begin{math}x \not\sim y', x' \not\sim y, x \sim x', y \sim y'\end{math}.
      Again, if \begin{math}x' \sim y'\end{math}, we have a finite obstruction \begin{math}O_{obs1}\end{math}.
      If \begin{math}x' \not\sim y'\end{math}, we have a finite obstruction \begin{math}O_{obs2}\end{math}.
\item If there are \begin{math}x \in A, y \in B, z \in A, t \in B\end{math}, such that \begin{math}x < y, z > t\end{math},
      we also have \begin{math}x \sim z, y \sim t\end{math}.
      If \begin{math}x < t\end{math}, then \begin{math}x < z\end{math}, a contradiction.
      If \begin{math}x > t\end{math}, then \begin{math}t < y\end{math}, a contradiction.
      Thus \begin{math}x \sim t\end{math}.
      If \begin{math}y < z\end{math}, then \begin{math}x < z\end{math}, a contradiction.
      If \begin{math}y > z\end{math}, then \begin{math}t < y\end{math}, a contradiction.
      Thus \begin{math}y \sim z\end{math}.
      Again, we have a finite obstruction \begin{math}O_{obs1}\end{math}.
\end{itemize}

Since we have the same obstructions, we proved the following lemma.
\begin{lemma}
  For an order \begin{math}O\end{math}, the following properties are equivalent:
\begin{enumerate}[(i)]
\item Whenever two elements \begin{math}x, y \in O\end{math} are incomparable,
    then either all elements less (resp. more) than \begin{math}x\end{math} are less (resp. more) than \begin{math}y\end{math},
    or all elements less (resp. more) than \begin{math}y\end{math} are less (resp. more) than \begin{math}x\end{math}.

\item If we consider two antichains \begin{math}A,B\end{math} of \begin{math}O\end{math}
    such that each element of \begin{math}A\end{math} is ordered with at least one element of \begin{math}B\end{math},
    and each element of \begin{math}B\end{math} is ordered with at least one element of \begin{math}A\end{math},
    then either all elements of \begin{math}A\end{math} are less than all elements of \begin{math}B\end{math},
    or all elements of \begin{math}A\end{math} are more than all elements of \begin{math}B\end{math}.

\item If we consider two maximal antichains \begin{math}A,B\end{math} of \begin{math}O\end{math},
    then either all elements of \begin{math}A\end{math} are less than all elements of \begin{math}B\end{math},
    or all elements of \begin{math}A\end{math} are more than all elements of \begin{math}B\end{math}.

\item If we consider two antichains \begin{math}A,B\end{math} of \begin{math}O\end{math}
    such that \begin{math}A\end{math} is a maximal antichain,
    and each element of \begin{math}A\end{math} is ordered with at least one element of \begin{math}B\end{math},
    then either all elements of \begin{math}A\end{math} are less than all elements of \begin{math}B\end{math},
    or all elements of \begin{math}A\end{math} are more than all elements of \begin{math}B\end{math}.

\item The order \begin{math}O\end{math} does not contain an induced suborder isomorphic to
    \begin{math}O_{obs1} = (\{a,b,c,d\}, \{a < b, c < d\}) \equiv \Inv(O_{obs1})\end{math},
    or isomorphic to \begin{math}O_{obs2} = (\{a,b,c,d\}, \{a < b, c < d, c < b\}) \equiv \Inv(O_{obs2})\end{math}.
\end{enumerate}
\end{lemma}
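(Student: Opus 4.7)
I would close the cycle \begin{math}(i) \Rightarrow (v) \Rightarrow (ii) \Rightarrow (iv) \Rightarrow (iii) \Rightarrow (v) \Rightarrow (i)\end{math}, reusing the case analysis that precedes the lemma. That analysis already establishes \begin{math}\neg(i) \Rightarrow \neg(v)\end{math} and \begin{math}\neg(ii) \Rightarrow \neg(v)\end{math} by explicitly extracting an induced copy of \begin{math}O_{obs1}\end{math} or \begin{math}O_{obs2}\end{math} from witnesses \begin{math}(x,y,z,t)\end{math}. Conversely, both obstructions already violate (i) and (ii): in \begin{math}O_{obs1}\end{math} the quadruple \begin{math}(x,y,z,t) = (a,c,b,d)\end{math} witnesses \begin{math}\neg(i)\end{math}, and in \begin{math}O_{obs2}\end{math} the quadruple \begin{math}(a,d,b,c)\end{math} does; the pair of antichains \begin{math}A=\{a,c\}\end{math}, \begin{math}B=\{b,d\}\end{math} satisfies the hypothesis of (ii) in each but not its conclusion, since \begin{math}a \sim d\end{math}. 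Hence any induced copy of either in \begin{math}O\end{math} transports the witnesses and breaks (i) and (ii), giving \begin{math}(i) \Leftrightarrow (v) \Leftrightarrow (ii)\end{math}.

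For \begin{math}(ii) \Rightarrow (iv) \Rightarrow (iii)\end{math}, both later conditions are successive specializations of their predecessor. To go from (ii) to (iv), I would take a maximal antichain \begin{math}A\end{math} and an antichain \begin{math}B\end{math} satisfying the hypothesis of (iv), and first note that a shared element of \begin{math}A \cap B\end{math} would have to be ordered with some other element of the antichain \begin{math}B\end{math}, which is impossible; so \begin{math}A \cap B = \emptyset\end{math}. Maximality of \begin{math}A\end{math} then ensures every \begin{math}b \in B\end{math} is ordered with some element of \begin{math}A\end{math}, so the full hypothesis of (ii) is met and its conclusion transfers. The step \begin{math}(iv) \Rightarrow (iii)\end{math} is symmetric, using the maximality of \begin{math}B\end{math} to supply the extra clause.

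The last and most delicate step is \begin{math}(iii) \Rightarrow (v)\end{math}. Suppose \begin{math}O\end{math} contains an induced copy of \begin{math}O_{obs1}\end{math} or \begin{math}O_{obs2}\end{math} on \begin{math}\{a,b,c,d\}\end{math}; in both, \begin{math}\{a,c\}\end{math} and \begin{math}\{b,d\}\end{math} are antichains of \begin{math}O\end{math} (for \begin{math}O_{obs2}\end{math} one checks \begin{math}b \sim d\end{math} from the three explicit relations \begin{math}a<b\end{math}, \begin{math}c<d\end{math}, \begin{math}c<b\end{math}). By Zorn's lemma I would extend \begin{math}\{a,c\}\end{math} to a maximal antichain \begin{math}A\end{math} of \begin{math}O\end{math} and \begin{math}\{b,d\}\end{math} to a maximal antichain \begin{math}B\end{math}. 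The induced relations \begin{math}a<b\end{math} and \begin{math}c<d\end{math} persist in \begin{math}O\end{math}, forcing \begin{math}b,d \notin A\end{math} and \begin{math}a,c \notin B\end{math}. But \begin{math}a \in A\end{math}, \begin{math}d \in B\end{math}, and \begin{math}a \sim d\end{math} in the induced suborder, hence in \begin{math}O\end{math}, contradicting (iii). The hardest part is this implication, which relies on controlling the Zorn extensions tightly enough to locate incomparable witnesses on opposite sides of the two antichains; fortunately the persisting order relations among the four obstruction elements make the required non-memberships automatic and turn the contradiction into a one-line check.
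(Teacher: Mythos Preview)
Your approach is essentially the paper's: reduce (i) and (ii) to (v) via the preceding case analysis (contrapositives in both directions), then obtain (iii) and (iv) as specializations of (ii). The paper shows each of (i)--(iv) equivalent to (v) separately rather than via your cycle, but the content is identical. Your one genuine addition is spelling out $\neg(v) \Rightarrow \neg(iii)$ with Zorn's lemma, where the paper merely says one ``can observe'' it; your argument there is clean and correct.

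The step you label ``symmetric'', namely $(iv) \Rightarrow (iii)$, is not quite routine: to feed two maximal antichains $A,B$ into (iv) you need every $a \in A$ to be ordered with some element of $B$, and maximality of $B$ supplies this only for $a \notin B$. Distinct maximal antichains can overlap---in the three-element order with $a<b$ and $c$ incomparable to both, $\{a,c\}$ and $\{b,c\}$ are maximal and share $c$, so (iii) as literally stated fails there while (i), (ii), (iv), (v) all hold. The paper glosses over exactly the same point (it asserts $(ii) \Rightarrow (iii)$ without further comment), so this is a defect in the formulation of (iii) rather than in your strategy; but your ``symmetric'' does not actually discharge the step, and it cannot, since the implication is false as stated.
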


\begin{proof}
  It is easy to prove this lemma by demonstrating that all properties (i), (ii), (iii) and (iv) are equivalent to (v).
  When studying the partial orders of cardinal 4, we observed that:
  \begin{itemize}
    \item the negation of (v) implies the negation of (i);
    \item the negation of (v) implies the negation of (ii);
    \item similarly, we can observe that the negation of (v) implies the negation of (iii), and it implies the negation of (iv).
  \end{itemize}
  Just above the lemma, we proved that:
  \begin{itemize}
    \item the negation of (i) implies the negation of (v);
    \item the negation of (ii) implies the negation of (v);
    \item similarly, we can observe that since (iii) and (iv) are implied by (ii),
          then the negation of (iii) (resp. (iv)) implies the negation of (ii) that implies the negation of (v).
  \end{itemize}
\end{proof}

Excluding an induced suborder isomorphic to \begin{math}O_{obs1}\end{math} characterizes the class of interval orders
(for countable orders but what about uncountable orders?);
excluding an induced suborder isomorphic to \begin{math}O_{obs2}\end{math} characterizes the class of series parallel orders;
thus orders with the 5 equivalent properties of previous lemma are the series parallel interval orders.
These orders were studied in the article by \cite{Disanto2010},
but most of the results on them in our article seems to be new.
In previous versions of this article, we named property (itov) the fact of being a series parallel interval order.
We keep the name ``property (itov)'' in most parts of this article,
since it is way shorter than ``being a series parallel interval order'',
and it reminds of the five equivalent properties, which we use,
instead of focusing on interval representation that we do not use,
and series parallel representation that we use later.

Being a series parallel interval order is necessary to obtain a total questionable representation, but it is not sufficient.
Indeed, we remarked how we could deal with globally incomparable elements by adding words of zeros as prefix of all other words.
This approach can be extended by proceeding step by step, at each step considering all elements that have a set of neighbours 
(elements more or less than it) that is minimal for inclusion among all elements that have not yet been processed.
Clearly the first step deals with globally incomparable elements (if there are some).
However, the ``minimal for inclusion'' is not always defined.
We could have an infinite decreasing chain of such elements.

For example, if we consider the positive integers ordered as usual and we add one copy of each positive integer to this order,
such that a ``copy-integer'' of integer \begin{math}i\end{math} 
is less than integers more than \begin{math}i\end{math} and incomparable with integers at most \begin{math}i\end{math},
and any copy-integer is incomparable to any other copy-integer.
It is not hard to see that this example is a series parallel interval order,
 since it does not contain one of the two finite obstructions.
(You can also remark that two elements are incomparable if and only if at least one of them is a copy-integer.
If both are, clearly the directed neighbourhood (taking into account the order) of the copy-integer of \begin{math}i\end{math} contains
the directed neighbourhood of the copy-integer of \begin{math}j\end{math} when \begin{math}i < j\end{math}.
If we observe the integer \begin{math}i\end{math} and the copy-integer of \begin{math}j\end{math} that are incomparable,
then \begin{math}i \leq j\end{math} and again the directed neighbourhood of the integer \begin{math}i\end{math} contains
the directed neighbourhood of the copy-integer of \begin{math}j\end{math}.
)

Clearly, when the directed neigbourhood of some element \begin{math}x\end{math}
is included in the directed neighbourhood of an incomparable element \begin{math}y \sim x\end{math},
then the word associated to \begin{math}x\end{math} must be a prefix of the word associated to \begin{math}y\end{math}.
Thus an ever decreasing chain of such neigbourhoods implies that we have an infinite chain of always smaller prefixes,
which is impossible since we consider words indexed by ordinals. 

Note that the contrapositive of property (i) is that
when the directed neighbourhoods of two elements are not ordered by inclusion,
then these two elements are ordered.
(The reciprocal of the contrapositive,
namely that when two elements are ordered then their directed neighbourhoods are not ordered by inclusion,
is always true.)

\begin{definition}[Neighbourhood order of an order]
Given an order \begin{math}O\end{math}, the \emph{neighbourhood order} of \begin{math}O\end{math},
denoted by \begin{math}\NeigbourhoodOrder(O)\end{math},
is defined as follow:
\begin{itemize}
  \item \begin{math}\Domain(\NeigbourhoodOrder(O)) = \Domain(O)\end{math},
  \item \begin{math}x <_{\NeigbourhoodOrder(O)} y\end{math} if and only if
  \begin{itemize}
    \item \begin{math}\{z <_{O} x, z \in \Domain(O)\} \subset \{z <_{O} y, z \in \Domain(O)\}\end{math}
      and \begin{math}\{z >_{O} x, z \in \Domain(O)\} \subseteq \{z >_{O} y, z \in \Domain(O)\}\end{math},
    \item or \begin{math}\{z <_{O} x, z \in \Domain(O)\} \subseteq \{z <_{O} y, z \in \Domain(O)\}\end{math}
      and \begin{math}\{z >_{O} x, z \in \Domain(O)\} \subset \{z >_{O} y, z \in \Domain(O)\}\end{math},
  \end{itemize}
\end{itemize}
\end{definition}

\begin{lemma}
If \begin{math}\NeigbourhoodOrder(O)\end{math} does not contain an infinite decreasing chain,
then there is a linear extension of \begin{math}\NeigbourhoodOrder(O)\end{math} that is a well-order.
It orders \begin{math}\Domain(O) = \Domain(\NeigbourhoodOrder(O))\end{math}
in bijection with an ordinal denoted by \begin{math}\alpha(\NeigbourhoodOrder(O))\end{math}.
\end{lemma}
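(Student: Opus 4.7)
The plan is to build the desired well-ordered linear extension by transfinite recursion, picking a $\NeigbourhoodOrder(O)$-minimal remaining element at each stage. First, by Zermelo's axiom (used in the same style as in the proof of Theorem~\ref{theorem:total_order_to_qr}), fix a bijection $g$ between $\Domain(O)$ and some ordinal $\beta$, namely the initial ordinal of the cardinality of $\Domain(O)$. This $g$ is not required to respect $\NeigbourhoodOrder(O)$; it serves only as a deterministic tie-breaker. Then define a sequence $(x_i)$ by transfinite recursion: at stage $i$, let $R_i = \Domain(O) \setminus \{x_j : j < i\}$; if $R_i = \emptyset$, stop and set $\alpha(\NeigbourhoodOrder(O)) = i$; otherwise let $x_i$ be the element of $R_i$ that is $\NeigbourhoodOrder(O)$-minimal in $R_i$ and, among all such minimal elements, has the smallest image under $g$.

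The crucial step is to show that a minimal element exists in every nonempty $R_i$. If not, then for each $y \in R_i$ there would exist some $y' \in R_i$ with $y' <_{\NeigbourhoodOrder(O)} y$; by (dependent) choice, one could extract an infinite strictly descending chain $y_0 >_{\NeigbourhoodOrder(O)} y_1 >_{\NeigbourhoodOrder(O)} y_2 > \cdots$ inside $R_i$, contradicting the hypothesis that $\NeigbourhoodOrder(O)$ contains no infinite descending chain. Once existence is established, the set of $g$-images of the minimal elements of $R_i$ is a nonempty subset of the ordinal $\beta$, so it has a least element, which pins down $x_i$ uniquely.

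It remains to verify (a) that the recursion produces a linear extension of $\NeigbourhoodOrder(O)$, and (b) that this linear order is a well-order. For (a), suppose $x_j <_{\NeigbourhoodOrder(O)} x_i$ with $i < j$; then at stage $i$, $x_j$ still belonged to $R_i$ and was strictly below $x_i$ in $\NeigbourhoodOrder(O)$, contradicting the minimality of $x_i$ in $R_i$. For (b), the indices at which the recursion produces an element form an initial segment of the ordinals (up to some $\alpha(\NeigbourhoodOrder(O)) \leq \beta$), and the map $x_i \mapsto i$ is by construction an order isomorphism between the new linear order and that ordinal. The main obstacle is really step two: extracting minimal elements under the mere assumption of no infinite descending chain, which is the classical equivalence, under dependent choice, between descending-chain-freeness and well-foundedness; the rest of the argument then unfolds as a standard transfinite induction.
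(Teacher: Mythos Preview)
Your argument is correct and is the standard proof of this fact; the paper itself states the lemma without proof, treating it as a well-known consequence of well-foundedness plus the axiom of choice. Your construction by transfinite recursion, picking $\NeigbourhoodOrder(O)$-minimal elements and breaking ties via a fixed well-ordering $g$, is exactly the canonical way to produce a well-ordered linear extension of a well-founded partial order.

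One small slip: you write that the enumeration terminates at some $\alpha(\NeigbourhoodOrder(O)) \leq \beta$. This inequality goes the wrong way. Since $i \mapsto x_i$ is a bijection onto $\Domain(O)$, the ordinal $\alpha(\NeigbourhoodOrder(O))$ has the same cardinality as $\beta$, hence $\alpha(\NeigbourhoodOrder(O)) \geq \beta$; but it can be strictly larger (for instance, if $\NeigbourhoodOrder(O)$ is already a well-order of type $\omega + \omega$, then $\beta = \omega$ while any linear extension has type at least $\omega + \omega$). The lemma as stated only claims a bijection with \emph{some} ordinal, so simply drop the bound and the proof stands.
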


\begin{theorem}
\label{theorem:itov_order_to_qr}
Any partial order \begin{math}O\end{math} with property (itov),
% of cardinal \begin{math}\aleph\end{math},
such that \begin{math}\NeigbourhoodOrder(O)\end{math} does not contain an infinite decreasing chain,
 has a total binary questionable representation
 of length at most \begin{math}(2 \times \alpha(\NeigbourhoodOrder(O))) + 1\end{math}.
\end{theorem}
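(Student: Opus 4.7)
The plan is to imitate the transfinite induction of Theorem~\ref{theorem:total_order_to_qr}, but enumerate $\Domain(O)$ via a linear extension of $\NeigbourhoodOrder(O)$ that is a well-order (guaranteed by the preceding lemma), identifying $\Domain(O)$ with the ordinal $\alpha := \alpha(\NeigbourhoodOrder(O))$. As in that proof I add a virtual element $M$ strictly greater than all of $O$ (initial word $1$) so that the $\DigitMin$ construction is well defined throughout, and assign the word $0$ to the first genuine element.

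The induction hypothesis at rank $j$ will be that the first $j$ elements carry binary words $w_j$ such that (a) the order given by $\Next$ on these words matches the induced suborder of $O$, (b) incomparable pairs are encoded as prefix pairs, (c) assignments are consistent across stages ($w_{j'}(y)$ extends $w_j(y)$ for $j' \geq j$), and (d) every current word has length at most $2j+1$. At a successor step processing the new element $x$, I split the processed elements into $D=\{y<x\}$, $U=\{y>x\}$, $I=\{y\sim x\}$. Two observations drive everything: first, property (i) forces $I$ to be an antichain in $O$ (if $y<y'$ were both in $I$, property (i) applied to $y\sim x$ would yield $y'\in U(x)$ or $y\in D(x)$, contradicting $y'\sim x$ or $y\sim x$); second, the choice of processing order together with property (i) gives $D_y\subseteq D_x$ and $U_y\subseteq U_x$ in $O$ for every $y\in I$. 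Combined with invariant (b), this ensures that $\{w_j(y):y\in I\}$ is a chain under the prefix relation; let $p$ be its longest word.

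I would then form the candidate word for $x$ by concatenating $p$ with the $\DigitMin$ of the tails (past position $|p|$) of the words $\{w_j(z):z\in U\cup\{M\}\}$, the prefix-shifted version of Hausdorff's recipe. Using property (ii) (antichains above $I$ lie fully above it, antichains below lie fully below), I would verify that for every $z\in U$ the bits of $w_j(z)$ at positions $0,\ldots,|p|-1$ coincide with $p$, so no spurious early question arises between the candidate and any $w_j(z)$. The $\DigitMin$ argument applied to the tails then yields the correct questions with every element of $U$, and the \emph{strictly decreasing sequence of question ranks} contradiction of Theorem~\ref{theorem:total_order_to_qr} re-establishes correctness against $D$. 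If the candidate still coincides with some $w_j(y_=)$ for $y_=\in D\cup U$, Hausdorff's fix --- append $0$ to all words of $D$, append $1$ to all words of $U\cup\{M\}$, and append the opposite bit to the candidate --- costs one bit. If in addition (or alternatively) the candidate coincides with $p$ itself (collision with the $I$-chain), one further bit is appended to the candidate so that it becomes a strict extension of $p$, hence distinct from every $w_j(y)$, $y\in I$; this preserves invariant (b). In all cases at most two bits are added, so the length bound $2(j+1)+1$ is preserved. Limit ordinals are handled verbatim as in Theorem~\ref{theorem:total_order_to_qr}, by taking coordinate-wise limits, and the sup at a limit $j$ stays below $2j+1$.

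The main obstacle is the verification after prepending $p$: one must check that no spurious question appears between the candidate word and any $w_j(z)$, $z\in U\cup D$, on the initial $|p|$ positions. This is exactly where property (ii) enters --- it forces every such $z$ to be strictly above or below the antichain $I$, so the corresponding bits of $w_j(z)$ must coincide with $p$'s pattern by invariant (a) applied to the existing questions between $z$ and elements of $I$. A secondary subtlety is verifying that the one-bit collision resolution never destroys the prefix invariant (b) for previously-processed incomparable pairs; this should reduce to appending the same bit to all mutually comparable elements in $D$ (resp.\ $U\cup\{M\}$), exactly as in Theorem~\ref{theorem:total_order_to_qr}, so no new question is created between previously incomparable elements and no existing question is destroyed.
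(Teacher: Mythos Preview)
Your approach differs substantially from the paper's and contains real gaps. The paper does \emph{not} adapt the Hausdorff $\DigitMin$ construction at all; it gives an explicit, non-inductive definition of the words. Enumerating $\Domain(O)$ via the well-order as $f^{-1}(0), f^{-1}(1), \ldots$, the word for the element $x$ of rank $i$ has length exactly $2(i+1)$, with the pair of bits at positions $2r, 2r{+}1$ (for $r \leq i$) set to $00$, $11$, or $10$ according as $x < f^{-1}(r)$, $x > f^{-1}(r)$, or $x$ is incomparable with $f^{-1}(r)$ (the pair at $2i,2i{+}1$ is always $10$). Correctness is then a short case analysis: if $w(x)$ and $w(y)$ (ranks $i>j$) have no question, the bits at $2j,2j{+}1$ force $x \sim y$; if the question is at $2j$ or $2j{+}1$, the encoding gives the relation directly; if the question is at $2k$ or $2k{+}1$ for some $k<j$, one invokes property (itov) via the auxiliary element $z=f^{-1}(k)$. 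No $\DigitMin$, no collision resolution, no prefix bookkeeping.

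Your sketch has at least two concrete problems. First, the claim that ``for every $z\in U$ the bits of $w_j(z)$ at positions $0,\ldots,|p|-1$ coincide with $p$'' is false: if the element $y^*$ with $w(y^*)=p$ satisfies $y^* < z$ (which is perfectly consistent with $y^* \sim x$ and $z > x$), then $w(y^*)$ and $w(z)$ have a question at some position $<|p|$, so $w(z)$ does \emph{not} agree with $p$ there. The resulting early question between your candidate and $w(z)$ happens to give the correct order, but your $\DigitMin$ must then be restricted to $U'=\{z\in U : z \sim y^*\}$, and the decreasing-rank argument against $D$ must be redone for $D'=\{z\in D : z\sim y^*\}$; this also requires strengthening invariant (b) to record the \emph{direction} of the prefix relation (smaller neighbourhood $\Rightarrow$ prefix), which you do not state. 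Second, when $I$ is infinite --- as it may be once the current rank exceeds $\omega$ --- the prefix chain $\{w_j(y):y\in I\}$ need not have a longest element, so ``let $p$ be its longest word'' is undefined; you would have to take the supremum and redo the analysis position-by-position without a single witnessing $y^*$. Both issues look repairable, but the resulting argument is considerably more delicate than the paper's two-bit encoding.
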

\begin{demo}
We consider the bijection 
\begin{math}f\end{math} between \begin{math}\Domain(O) = \Domain(\NeigbourhoodOrder(O))\end{math}
 and \begin{math}\Domain(\alpha(\NeigbourhoodOrder(O)))\end{math}
This bijection is easily extended to a bijection \begin{math}g\end{math} between the set 
\begin{math}\Domain(O) \times \{0,1\}\end{math} and the set \begin{math}\Domain(2 \times \alpha(\NeigbourhoodOrder(O)))\end{math}.
In order that any element of \begin{math}O\end{math} is associated to two consecutive ordinals in
\begin{math}2 \times \alpha(\NeigbourhoodOrder(O))\end{math}.

We will now consider the sequence of total orders \begin{math}\mathcal{O}^{0,1}_{2 \times \alpha(\NeigbourhoodOrder(O))}\end{math}
that we will order partially with 
\begin{math}\Next(2, 2 \times \alpha(\NeigbourhoodOrder(O)) + 1, \mathcal{O}^{0,1}_{2 \times \alpha(\NeigbourhoodOrder(O)) + 1})\end{math}.
The idea is simply to associate two bits of information to each element of \begin{math}O\end{math}.
One of the bits will be set to 0 on the element and all later elements that are less than it,
and set to 1 on all later elements that are more than it;
and the other bit will be set to 1 on the element and all later elements that are more than it,
and set to 0 on all later elements that are less than it.

Let \begin{math}x\end{math} be the \begin{math}(i + 1)\end{math}th element (element with ordinal rank \begin{math}i\end{math}).
We associate to \begin{math}x\end{math} a word of length \begin{math}2 \times (i + 1)\end{math}
as follow:
Let us define \begin{math}w(x)\end{math} as the word where:
\begin{itemize}
  \item the digit at rank \begin{math}2 \times r\end{math}, \begin{math}r < i\end{math},
        is 0 if \begin{math}x\end{math} is less than the \begin{math}(r+1)\end{math}th element,
        1 otherwise.
  \item the digit at rank \begin{math}(2 \times r) + 1\end{math}, \begin{math}r < i\end{math},
        is 1 if \begin{math}x\end{math} is more than the \begin{math}(r+1)\end{math}th element,
        0 otherwise.
  \item the digit at rank \begin{math}2 \times i\end{math} is 1.
  \item the digit at rank \begin{math}(2 \times i) + 1\end{math} is 0.
\end{itemize}
Equivalently, grouping digits by pairs, we could say that
\begin{math}w(x)[2 \times i,(2 \times i)+1] = 10\end{math},
and, for \begin{math}r < i\end{math},
\begin{math}w(x)[2 \times r,(2 \times r)+1] = 00\end{math} if \begin{math}x < f^{-1}(r)\end{math},
\begin{math}w(x)[2 \times r,(2 \times r)+1] = 11\end{math} if \begin{math}x > f^{-1}(r)\end{math},
\begin{math}w(x)[2 \times r,(2 \times r)+1] = 10\end{math} otherwise.

Now, we prove that \begin{math}\Next(2, 2 \times \alpha(\NeigbourhoodOrder(O)) + 1, \mathcal{O}^{0,1}_{2 \times \alpha(\NeigbourhoodOrder(O)) + 1})\end{math}
applied to the words \begin{math}\{ w(x), x \in \Domain(O)\}\end{math} matches the partial order \begin{math}O\end{math}.

Consider two elements \begin{math}x, y\end{math}  of \begin{math}O\end{math}.
Let \begin{math}i\end{math} be the ordinal rank of \begin{math}x\end{math}
(\begin{math}x\end{math} is the \begin{math}(i+1)\end{math}th element).
Let \begin{math}j\end{math} be the ordinal rank of \begin{math}y\end{math}
(\begin{math}y\end{math} is the \begin{math}(j+1)\end{math}th element).
Assume without loss of generality that \begin{math}j < i\end{math}.

\begin{itemize}
\item If \begin{math}w(x)\end{math} and \begin{math}w(y)\end{math} does not have a question,
in particular \begin{math}w(x)[2 \times j] = w(y)[2 \times j] = 1\end{math},
and \begin{math}w(x)[(2 \times j)+1] = w(y)[(2 \times j)+1] = 0\end{math};
by definition of these four digits, it implies that \begin{math}x\end{math} and \begin{math}y\end{math} are incomparable.

\item We now consider the case when \begin{math}w(x)\end{math} and \begin{math}w(y)\end{math} does have a question.
If the question is at rank \begin{math}2 \times j\end{math} or at rank \begin{math}(2 \times j)+1\end{math}
by definition of these four digits, it implies that \begin{math}x\end{math} and \begin{math}y\end{math}
are ordered exactly as \begin{math}w(x)\end{math} and \begin{math}w(y)\end{math} are ordered by \begin{math}\Next\end{math}.
If the question is at rank \begin{math}2 \times k\end{math} or at rank \begin{math}(2 \times k)+1\end{math},
for some \begin{math}k < j\end{math},
let \begin{math}z\end{math} be the \begin{math}(k+1)\end{math}th element;
by definition of these four digits, we have the following cases to consider:
\begin{itemize}
  \item \begin{math}x \sim z\end{math} and \begin{math}y \not\sim z\end{math},
        in that case \begin{math}w(x)[2 \times k] = w(z)[2 \times k] = 1\end{math},
        and \begin{math}w(x)[(2 \times k)+1] = w(z)[(2 \times k)+1] = 0\end{math}.
        Since \begin{math}x \sim z\end{math} and order \begin{math}O\end{math} has property (itov),
        \begin{math}x\end{math} is ordered with \begin{math}y\end{math}
        exactly like \begin{math}z\end{math} is ordered with \begin{math}y\end{math}.
        The order is preserved;
  \item \begin{math}x \not\sim z\end{math} and \begin{math}y \sim z\end{math},
        that case is similar to previous case;
  \item \begin{math}x \not\sim z\end{math} and \begin{math}y \not\sim z\end{math}.
        Either \begin{math}w(x)[2 \times k,(2 \times k)+1] = 00\end{math},
        or \begin{math}w(x)[2 \times k,(2 \times k)+1] = 11\end{math}.
        Either \begin{math}w(y)[2 \times k,(2 \times k)+1] = 00\end{math},
        or \begin{math}w(y)[2 \times k,(2 \times k)+1] = 11\end{math}.
        Since they have a question on these two digits,
        either \begin{math}w(x)[2 \times k,(2 \times k)+1] = 00\end{math}
        and \begin{math}w(y)[2 \times k,(2 \times k)+1] = 11\end{math},
        or \begin{math}w(x)[2 \times k,(2 \times k)+1] = 11\end{math},
        and \begin{math}w(y)[2 \times k,(2 \times k)+1] = 00\end{math}.
        Thus, either we observe that \begin{math}w(x) <_{\Next} w(z)\end{math},
        \begin{math}w(x) <_{\Next} w(y)\end{math}, \begin{math}w(z) <_{\Next} w(y)\end{math},
        which implies by definition of these digits that \begin{math}x < z\end{math},
        \begin{math}z < y\end{math}; by transitivity we have \begin{math}x < y\end{math} and the order is preserved.
        Or we observe that \begin{math}w(z) <_{\Next} w(x)\end{math},
        \begin{math}w(y) <_{\Next} w(x)\end{math}, \begin{math}w(y) <_{\Next} w(z)\end{math},
        which implies by definition of these digits that \begin{math}z < x\end{math},
        \begin{math}y < z\end{math}; by transitivity we have \begin{math}y < x\end{math} and the order is preserved.
\end{itemize}
\end{itemize}

In all cases \begin{math}x \end{math} and \begin{math}y\end{math}
are ordered exactly as \begin{math}w(x)\end{math} and \begin{math}w(y)\end{math} are ordered by \begin{math}\Next\end{math}.
\end{demo}

The length of the questionable representation can not be improved in the previous theorem for orders of infinite cardinal.
Indeed, we already observed that the words should be prefix of each others according to the order \begin{math}\NeigbourhoodOrder(O)\end{math}.
And, for any ordinal \begin{math}\beta\end{math} of same cardinal than \begin{math}O\end{math},
you can construct another order \begin{math}O'\end{math} of same cardinal than \begin{math}O\end{math},
such that \begin{math}\NeigbourhoodOrder(O')\end{math} contains an infinite ascending chain of length \begin{math}\beta\end{math}.
(Consider \begin{math}O'\end{math} containing the ordinal \begin{math}\beta\end{math} along with copy-elements
such that each copy-element is only ordered with (more than) the elements of the ordinal \begin{math}\beta\end{math} that are less than its ``original value''.
The antichain of copy-elements yields the desired chain in \begin{math}\NeigbourhoodOrder(O')\end{math}
and it is easy to check that \begin{math}O'\end{math} has property (itov).)

\section{Algorithms for finite series parallel interval orders}
\label{section:algorithms_for_finite_orders_with_property_itov}

From (v) of property (itov), we have immediately a polynomial time algorithm
to test if a (partial) order has property (itov).
Indeed, it is sufficient to enumerate all 4-tuples of elements of the partial order
and verify if one of the corresponding induced suborder is isomorphic to one of the two obstructions
\begin{math}O_{obs1} = (\{a,b,c,d\}, \{a < b, c < d\}) \equiv \Inv(O_{obs1})\end{math},
and \begin{math}O_{obs2} = (\{a,b,c,d\}, \{a < b, c < d, c < b\}) \equiv \Inv(O_{obs2})\end{math}.
This yields an \begin{math}O(n^4)\end{math} algorithm,
where \begin{math}n\end{math} is the number of elements of the order \begin{math}P\end{math}.
(We will use \begin{math}P\end{math} to denote partial or total orders in this section,
in order to avoid confusion with asymptotic \begin{math}O\end{math} notation.)

But a faster algorithm to test membership in the family of finite (itov) orders is also possible.
Indeed, from (i) of property (itov), we can test for each pair of elements \begin{math}x,y \in P\end{math}
whether they are incomparable both in the order and in the corresponding neighbourhood order, or not.
This test can be done in \begin{math}O(n)\end{math} time:
\begin{itemize}
\item testing if the elements are incomparable is immediate;
\item then we consider the comparibility matrix restricted to the two rows corresponding to the elements,
we iterate over all columns starting with a state ``equality between \begin{math}x\end{math} and \begin{math}y\end{math}'':
  \begin{itemize}
  \item if some third element is less than \begin{math}x\end{math} and more than \begin{math}y\end{math},
        or the symetric case, we know the comparibility matrix is not transitive
        (since \begin{math}x\end{math} and \begin{math}y\end{math} were found incomparable in it) and stop;
  \item if some third element is less or more than \begin{math}x\end{math} and incomparable with \begin{math}y\end{math},
        or the symetric case,
        then we want to switch to state ``advantage to \begin{math}x\end{math}'' (resp. \begin{math}y\end{math}):
        if we already have ``advantage to \begin{math}y\end{math}'' (resp. \begin{math}x\end{math}), we know it is not an (itov) order and stop;
        otherwise, we set the state and proceed to the next column;
  \item in all other cases, we proceed to the next column.
  \item if we do not stop prematurely, we know that property (i) of (itov) is satisfied for \begin{math}x,y\end{math}.
  \end{itemize}
\end{itemize}
Hence we obtain an \begin{math}O(n^3)\end{math} algorithm for finite (itov) orders membership testing.

We shall try to obtain other algorithmic results, but we need more structural results.
Our structural results will be nice, but the algorithmic results will be much less efficient
than classical results on series parallel orders.

\begin{definition}[Maximum chain, height]
Let \begin{math}P\end{math} be an order,
a chain of \begin{math}P\end{math} is \emph{maximum}
if it is maximal and no other chain of \begin{math}P\end{math} has greater cardinality.
The cardinal of a maximum chain is the \emph{height} of \begin{math}P\end{math},
denoted \begin{math}\Height(P)\end{math}.
When \begin{math}P\end{math} is well-founded\footnote{
Some authors also say Noetherian. In both cases, it means that there is no strictly decreasing infinite sequence.
},
we redefine a maximum chain to be one such that the corresponding ordinal is maximum;
and we redefine its height to be the ordinal corresponding to its maximum chains.
Thus in this case \begin{math}\Height(P)\end{math} denotes an ordinal.
\end{definition}
Note that an infinite order may have no maximum chain, but it always have at least one maximal chain.
When there is no maximum chain, \begin{math}\Height(P)\end{math} is defined as
the supremum cardinal/ordinal of the cardinals/ordinals corresponding to maximal chains.

\begin{definition}[Trunk]
Let \begin{math}P\end{math} be an order,
a \emph{trunk} \begin{math}T\end{math} of \begin{math}P\end{math}
is an induced suborder of \begin{math}P\end{math} such that:
\begin{enumerate}
\item[(trunk:i)] \begin{math}\forall x,y,z \in T, x \sim y \text{ and } y \sim z\end{math} implies \begin{math}x \sim z\end{math}.
\end{enumerate}
Moreover a trunk is said to be \emph{full} if
\begin{enumerate}
\item[(trunk:ii)] \begin{math}T\end{math} contains at least one maximum chain of \begin{math}P\end{math}.
\end{enumerate}
Note that a chain or an antichain are trunks.
A trunk is said to be \emph{maximal} if it is not contained in another trunk.
A trunk is said to be \emph{maximum} if it is the only maximal trunk of \begin{math}P\end{math}.
A full trunk is said to be \emph{relatively maximum} if it is the only maximal full trunk of \begin{math}P\end{math}.
\end{definition}

Note that a maximal (resp. maximum) chain is a trunk (resp. full trunk)
but this trunk may neither be maximal, nor maximum.

Clearly, be a trunk (trunk:i) is an hereditary property, any induced suborder of a trunk is a trunk.
Hence, a trunk is maximal when no individual element can be added to it and still obtain a trunk.
It does not matter whether one tries to add elements one at a time or many at once to a trunk.
Observe that (trunk:i) is equivalent to exclude the induced suborder \begin{math}O_{obst} = (\{a,b,c\}, \{a < b\}) \equiv \Inv(O_{obst})\end{math}.
Thus trunks have (itov) property.

Trunks are the name we gave to weak orders (or preorders),
see the article by \cite{DBLP:journals/dm/Trenk98} for a list of references on this topic.
We did not found a reference to what we name full trunks although.

We would like the reader to read the following lemma with the following example in mind:
Consider an order slightly similar to \begin{math}O_{obs1}\end{math} and \begin{math}O_{obs2}\end{math} with 4 elements:
two at the bottom and two at the top such that elements at the bottom are incomparable,
elements at the top are incomparable, and elements at the bottom are less than elements at the top.
Clearly this order is a (full-)trunk of itself.
Generalize by adding levels of two incomparable elements each between bottom and top elements.
Add one element that is less than one of the bottom elements (and thus is less than intermediate and top elements by transitivity),
to obtain a second order.
Draw a picture of both orders.
Is the first order still a full trunk in the second order?
\begin{lemma}
Let \begin{math}P\end{math} be a well-founded order.
Consider a level decomposition of \begin{math}P\end{math}
as a function \newline \begin{math}\Level: \Domain(P) \rightarrow \Height(P)\end{math}
(\begin{math}\Height(P)\end{math} is an arbitrary ordinal.).
An induced suborder \begin{math}T\end{math} of \begin{math}P\end{math} is a full trunk of \begin{math}P\end{math}
if and only if
\begin{enumerate}
\item[(trunk:i')] two elements of \begin{math}T\end{math} are ordered the way their levels are ordered
\newline
(\begin{math}\forall x,y \in T, (x \sim y \vee x = y) \Leftrightarrow \Level(x) = \Level(y),
 x < y \Leftrightarrow \Level(x) < \Level(y),
 x > y \Leftrightarrow \Level(x) > \Level(y)\end{math}),
\item[(trunk:ii')] and \begin{math}T\end{math} contains an element in each level of \begin{math}P\end{math}.
\end{enumerate}
\end{lemma}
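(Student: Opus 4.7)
The plan is to prove the two implications separately, exploiting the structural fact that a trunk is a weak order, so on $T$ the relation ``incomparable-or-equal'' is an equivalence whose classes are antichains totally ordered by the induced order of $P$, together with the standard properties of a level decomposition (same-level elements are incomparable or equal; strict comparability implies strict level inequality; a maximum chain of a well-founded order meets each level exactly once).

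For the backward direction, assume (trunk:i') and (trunk:ii'). Condition (trunk:i) is immediate: if $x \sim y$ and $y \sim z$ in $T$, then (trunk:i') forces $\Level(x) = \Level(y) = \Level(z)$, so $x \sim z$ or $x = z$. For (trunk:ii), use (trunk:ii') to pick $t_\alpha \in T$ with $\Level(t_\alpha) = \alpha$ for each $\alpha < \Height(P)$; by (trunk:i') we get $t_\alpha < t_{\alpha'}$ whenever $\alpha < \alpha'$, so $(t_\alpha)_{\alpha < \Height(P)}$ is a chain of $P$ of order type $\Height(P)$ contained in $T$, i.e., a maximum chain of $P$ inside $T$.

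For the forward direction, assume $T$ is a full trunk and fix a maximum chain $M = (m_\alpha)_{\alpha < \Height(P)} \subseteq T$ indexed so that $\Level(m_\alpha) = \alpha$. The key step is to show that each $x \in T$ is incomparable-or-equal to the \emph{unique} element $m_{\Level(x)} \in M$. Existence is automatic since $x$ and $m_{\Level(x)}$ share a level, so they are equal or incomparable. Uniqueness follows from (trunk:i): if $x \sim m_\alpha$ and $x \sim m_{\alpha'}$ with $\alpha \neq \alpha'$, then applying (trunk:i) to $m_\alpha, x, m_{\alpha'}$ forces $m_\alpha \sim m_{\alpha'}$, contradicting $M$ being a chain. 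Property (trunk:ii') is then immediate since $m_\alpha \in T$ realizes level $\alpha$.

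It remains to derive the biconditionals of (trunk:i'). The equivalence of $\Level(x) = \Level(y)$ with ``$x \sim y$ or $x = y$'' is a direct consequence of the level-decomposition properties combined with the unique-partner step. For the strict case $\Level(x) < \Level(y)$: if $x \sim y$ held, applying (trunk:i) to $x, y, m_{\Level(y)}$ with $y \sim m_{\Level(y)}$ would yield $x \sim m_{\Level(y)}$, contradicting the uniqueness of $x$'s partner ($m_{\Level(x)} \neq m_{\Level(y)}$); hence $x$ and $y$ are comparable, and combining $y > m_{\Level(x)}$ (which follows from the level-decomposition and the unique-partner step) with $x \not> m_{\Level(x)}$ rules out $y < x$ by transitivity, leaving $x < y$. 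The main obstacle is the unique-partner step: the trunk-transitivity condition must be combined with the chain structure of $M$ to produce the bijection between $\sim$-classes of $T$ and the levels of $P$; once this is in place, the three biconditionals of (trunk:i') reduce to routine three-element applications of (trunk:i).
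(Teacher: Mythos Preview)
Your proof is correct and follows essentially the same approach as the paper. Both arguments fix a maximum chain inside $T$ (the paper calls it $C$, you call it $M$) and use (trunk:i) together with the chain property to force incomparable elements of $T$ to share a level; your ``unique partner in $M$'' step is just a repackaging of the paper's direct contradiction (the paper takes $x_0,y_0\in C$ at levels $\Level(x),\Level(y)$ and applies (trunk:i) twice to get $x_0\sim y_0$, contradicting that $C$ is a chain). Your treatment of the strict case is a bit more elaborate than needed once the $\sim$-biconditional is established, but nothing is wrong.
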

\begin{demo}
Since (trunk:ii) holds, \begin{math}T\end{math} contains at least one maximum chain of \begin{math}P\end{math},
and it is clear that \begin{math}\forall h \in \Height(P), \exists x \in T \text{ such that } \Level(x) = h\end{math}.

Clearly, for any well-founded order
\begin{math}\forall x,y \in P, \Level(x) = \Level(y) \Rightarrow (x \sim y \vee x = y),
 x < y \Rightarrow \Level(x) < \Level(y),
 x > y \Rightarrow \Level(x) > \Level(y)\end{math}.
Thus, we only need to prove
\begin{math}\forall x,y \in T \text{ with } x \neq y, x \sim y \Rightarrow \Level(x) = \Level(y)
 ~ (\text{implying} \Level(x) < \Level(y) \Rightarrow x < y
 \text{ and } \Level(x) > \Level(y) \Rightarrow x > y\end{math}).

Assume for a contradiction that
\begin{math}\exists x,y \in T \text{ such that } x \sim y \text{ and } \Level(x) \neq \Level(y)\end{math}.
Let \begin{math}C\end{math} be a maximum chain of \begin{math}P\end{math} included in \begin{math}T\end{math}.
Let \begin{math}x_0 \in C  \text{ be such that } \Level(x) = \Level(x_0) \end{math},
and \begin{math}y_0 \in C  \text{ be such that } \Level(y) = \Level(y_0) \end{math}.
We have \begin{math}x \sim x_0 \vee x = x_0\end{math} and \begin{math}y \sim y_0 \vee y = y_0\end{math},
hence by (trunk:i), we have \begin{math}x_0 \sim y_0 \end{math},
contradicting the fact that \begin{math}C\end{math} is a chain.

As a consequence, we note that \begin{math}T\end{math} as a graph is connected,
unless \begin{math}P\end{math} is an order where all elements are incomparable.

The reciprocal is clear:
If there is an element in each level and these elements are ordered according to their level,
there is a maximum chain in \begin{math}T\end{math}.
If all elements are ordered according to their level, then (trunk:i) is immediate.
\end{demo}

Consider well-founded orders.
Note that (trunk:i) does not imply (trunk:i'):
Consider an antichain in \begin{math}P\end{math} containing elements in distinct levels.
To sum up, we have:
\begin{itemize}
\item (trunk:i') \begin{math}\Rightarrow\end{math} (trunk:i),
\item (trunk:ii) \begin{math}\Rightarrow\end{math} (trunk:ii')
\end{itemize}
and the other implications are all false.

(full trunk = (trunk:i) and (trunk:ii) \begin{math}\Leftrightarrow\end{math} (trunk:i') and (trunk:ii')) is the previous lemma.\newline
Thus (trunk:i') and (trunk:ii) \begin{math}\Rightarrow\end{math} full trunck,\newline
and (trunk:i) and (trunk:ii) \begin{math}\Rightarrow\end{math} (trunk:i) and (trunk:ii').\newline
But full trunck also equals (trunk:i) and (trunk:ii) and (trunk:i') and (trunk:ii'),
hence full trunck \begin{math}\Rightarrow\end{math} (trunk:i') and (trunk:ii).

Unfortunately, (trunk:i) and (trunk:ii') \begin{math}\not\Rightarrow\end{math} full trunk,
indeed consider an order \begin{math}P\end{math} made of \begin{math}n\end{math} chains of length \begin{math}n - 1\end{math},
\begin{math}n \geq 2\end{math}.
Taking a diagonal of \begin{math}n\end{math} elements,
i.e. an element in each chain, each element at a different level,
yields an antichain that is a trunk intersecting each level but is not a full trunk.
Thus we have:
\begin{itemize}
\item full trunk = (trunk:i) and (trunk:ii) \begin{math}\Leftrightarrow\end{math} (trunk:i') and (trunk:ii') \begin{math}\Leftrightarrow\end{math} (trunk:i') and (trunk:ii),
\item full trunk \begin{math}\Rightarrow\end{math} (trunk:i) and (trunk:ii').
\end{itemize}
All subsets of at least 3 properties among (trunk:i), (trunk:ii), (trunk:i'), and (trunk:ii')
are equivalent to full trunks.
The name trunk comes from the drawing of (induced sub-)orders with property (trunk:i'),
but unfortunately this property cannot be defined outside of well-founded orders.

Order isomorphism and counting linear extensions of an order are two hard problems.
(Order isomorphism is equivalent to graph isomorphism,
 for which neither \np-completeness proof, nor polynomial time algorithm is known.
Counting linear extensions of an order is \diesep-complete, see \cite{DBLP:conf/stoc/BrightwellW91}.)

Finite trunks with \begin{math}n\end{math} elements have an encoding consisting of \begin{math}l\end{math} integers,
where \begin{math}l\end{math} is the number of levels in its decomposition.
Since \begin{math}l\end{math} may be equal to \begin{math}n\end{math},
this encoding has size \begin{math}\Theta(n \times \log(n))\end{math},
which is slightly more compact than \begin{math}\Theta(n^2)\end{math}, for arbitrary orders.

However, it is trivial to see that two finite trunks are isomorphic
if and only if they have the same number of elements in each level.
Thus given two trunks as \begin{math}T = (t_0, \dots, t_l)\end{math} and \begin{math}U = (u_0, \dots, u_{l'})\end{math},
one can decide if they are isomorphic by comparing \begin{math}l\end{math} versus \begin{math}l'\end{math},
and \begin{math}t_i\end{math} versus \begin{math}u_i, 0 \leq i \leq \min(l, l')\end{math} in time \begin{math}\Theta(n \times \log(n))\end{math}
for worst case complexity, which is linear in the size of the encoding.

It is also trivial to count the number of linear extensions of a trunk \begin{math}T = (t_0, \dots, t_l)\end{math}.
Clearly this is equal to \begin{math}\prod_{i=0}^{i=l} (t_i!)\end{math},
which can be computed in time \begin{math}O(n \times \MultiplyCost(n\log(n)))\end{math},
where \begin{math}\MultiplyCost(p)\end{math} denotes the time complexity of multiplication of integers
 of \begin{math}p\end{math} bits.
(Currently the best asymptotic upper bound known for \begin{math}\MultiplyCost(p)\end{math}
is \begin{math}p \times \log(p) \times 4^{\log^{*}(p)}\end{math} (see \cite{DBLP:journals/corr/abs-1802-07932}).
Thus this algorithm is almost quadratic in the size of the encoding.)

We shall try to generalize these results to orders with property (itov) (and even a superclass of (itov) orders).
But first we note that computing a level decomposition of an order can be done in time \begin{math}O(n^3)\end{math}.
Testing if an order given with its level decomposition is a trunk can be done in time \begin{math}O(n^2)\end{math};
indeed it is sufficient to consider the induced suborder made of two levels, for each couple of consecutive levels \begin{math}(i,i+1)\end{math},
and verify that the number of arcs/comparability relationship is equal to \begin{math}t_i \times t_{i+1}\end{math}.
Thus membership in the class of trunks can be tested in time \begin{math}O(n^3)\end{math}.
\cite{DBLP:journals/dm/Trenk98} gives an \begin{math}O(n^2)\end{math} algorithm for this.
A faster algorithm is known, we will mention it at the end of this section.

\begin{theorem}
\label{theorem:isomorphic_maximal_chains}
All maximal chains in a trunk are isomorphic (maximal relatively to the trunk).
In particular they have the same cardinal.
In a full trunk of an order \begin{math}P\end{math}, any element belongs to a chain isomorphic to a maximum chain
(but this chain may not be maximal in \begin{math}P\end{math} if \begin{math}P\end{math} is infinite;
we will give an example after Proposition~\ref{proposition:exists_rmf_trunk}).
\end{theorem}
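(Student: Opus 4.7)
The plan is to exploit property (trunk:i), which says that incomparability is transitive on a trunk $T$, in order to reduce the problem to a level decomposition of $T$, and then argue about maximal chains in terms of these levels.

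First, I would observe that on a trunk $T$ the relation defined by $x \equiv y \iff (x \sim y \text{ or } x = y)$ is an equivalence relation: reflexivity and symmetry are immediate, and transitivity is exactly (trunk:i). The quotient $T/{\equiv}$ inherits a total order from $P$, because any two distinct equivalence classes contain representatives that are comparable (otherwise they would be merged by (trunk:i)), and then all pairs of representatives of these two classes are comparable the same way (again by (trunk:i), applied once per pair). Call these equivalence classes the levels of $T$. Any chain in $T$ contains at most one element of each level, and conversely any transversal (one element per level) is a chain in $T$ because different levels are linearly ordered. Therefore a chain $C \subseteq T$ is maximal in $T$ if and only if $C$ meets every level of $T$ in exactly one element.

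From this characterisation, any maximal chain of $T$ is order isomorphic to the totally ordered quotient $T/{\equiv}$, which proves the first part of the statement: all maximal chains in a trunk are isomorphic and in particular share the same cardinal.

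For the second part, let $T$ be a full trunk of $P$ and let $C \subseteq T$ be a maximum chain of $P$. I claim $C$ is also a maximal chain of $T$: indeed, if $C$ could be extended inside $T$, then since $T \subseteq P$ it could be extended inside $P$, contradicting the maximality (and hence also the maximum-cardinality) of $C$ in $P$. By the characterisation above, $C$ then meets every level of $T$ in exactly one element. Now fix any $x \in T$ and let $L_i$ be its level; write $c_i$ for the unique element of $C \cap L_i$. The set $C' = (C \setminus \{c_i\}) \cup \{x\}$ is again a transversal of the levels of $T$, hence a chain in $T$, and the bijection sending $c_i$ to $x$ and fixing everything else is an order isomorphism between $C$ and $C'$, because order between elements of different levels is dictated only by the levels. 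Thus every element of $T$ lies on a chain isomorphic to a maximum chain of $P$, as required.

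The only subtle point, which I would state explicitly, is that the chain $C'$ constructed above need not be maximal in $P$ when $P$ is infinite (the example announced after Proposition~\ref{proposition:exists_rmf_trunk} will illustrate this); the theorem only promises isomorphism to a maximum chain, not maximality of the constructed chain, so this causes no difficulty. Since the argument never invokes well-foundedness or (trunk:i')/(trunk:ii'), it applies uniformly to finite and infinite trunks.
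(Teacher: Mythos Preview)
Your proof is correct and follows essentially the same route as the paper: use (trunk:i) to show that incomparability (together with equality) is an equivalence relation on $T$, obtain a totally ordered quotient (the levels), and characterise maximal chains as exactly the transversals of these levels. The paper reaches the same level structure by first fixing a maximal chain $C$ and defining $\Level_{T,C}(x)$ as the unique element of $C$ incomparable with $x$, only remarking afterward that these levels are precisely the incomparability classes you start from directly; your presentation via the quotient $T/{\equiv}$ is slightly cleaner and treats the full-trunk clause more explicitly than the paper does.
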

\begin{demo}
Remark that any element of a trunk that is not in a chain belonging to the trunk
is incomparable with at most one element of the chain by (trunk:i).
Moreover, if the chain is maximal for inclusion,
no element of the trunk outside the chain may be ordered with all its elements.
Hence, any element of a trunk that is not in a maximal chain belonging to the trunk
is incomparable with exactly one element of the chain by (trunk:i).
Thus given a trunk \begin{math}T\end{math}, one of its maximal chains \begin{math}C \subseteq T\end{math},
one can define ``locally to the trunk'' a notion of level.
In the case of orders that are not well-founded, this notion of level is only relative to \begin{math}C\end{math},
and there is nothing we can do in order to obtain an ordinal or a ``negative ordinal'' below some origin
(Think about the trunk \begin{math}\zz + \zz\end{math}).
Let us denote this level by \begin{math}\Level_{T,C}(x), \forall x \in T\end{math}.
From (trunk:i) again, all elements inside a level are incomparable
(\begin{math}x \sim \Level_{T,C}(x), y \sim \Level_{T,C}(y), \Level_{T,C}(x) = \Level_{T,C}(y) \Rightarrow x \sim y\end{math}).
Moreover, if two elements in distinct levels would be incomparable,
then, by (trunk:i) their levels would also be incomparable.
Thus two elements in distinct levels are comparable.
They cannot be ordered differently of their level (assume for a contradiction that
\begin{math}\Level_{T,C}(x) < \Level_{T,C}(y)\end{math} and \begin{math}x > y\end{math};
if \begin{math}\Level_{T,C}(y) < x\end{math}, then \begin{math}\Level_{T,C}(x) < x\end{math}, a contradiction;
but \begin{math}x > y\end{math} and \begin{math}\Level_{T,C}(y) > x\end{math} implies \begin{math}\Level_{T,C}(y) > y\end{math},
a contradiction).
\end{demo}
Thanks to this theorem, we can talk about \begin{math}\Level_{T}(x), \forall x \in T\end{math}
 instead of \begin{math}\Level_{T,C}(x), \forall x \in T\end{math},
it is understood that the value \begin{math}\Level_{T}(x)\end{math} is an equivalence class over elements of \begin{math}T\end{math}.
Of course this equivalence class is one equivalence class of the relation of incomparability
(that is symetric by definition and transitive by (trunk:i)).
We will occasionally use \begin{math}\Level(x)\end{math}
to denote the set of elements with the same level than \begin{math}x\end{math}.
Formally we should write \begin{math}\Level^{-1}(\Level(x))\end{math}
(this is not the identity since \begin{math}\Level\end{math} is not 1-1 outside of total orders).

This theorem also shows that finite weak orders/trunks correspond to the finite orders with Jordan-Dedekind chain condition,
see the article by \cite{Linial1985}.
It is trivial to see that infinite well-orders may have the Jordan-Dedekind chain condition without being weak orders/trunks.

A relatively maximum full trunk may not be a maximum trunk.
Indeed consider a chain of length 3 \begin{math}(x_0, x_1, x_2)\end{math}
with another element \begin{math}y\end{math} less than the maximum element \begin{math}x_2\end{math} of the chain.
Clearly the chain is a relatively maximum full trunk, since it is the only full trunk.
But the suborder induced by \begin{math}\{x_0,x_2,y\}\end{math} is also a maximal trunk.
We prove in the following corollary that if a trunk is maximum, it is a full trunk.

\begin{corollary}
An order \begin{math}P\end{math} has a maximum trunk if and only if it is itself a ((relatively maximum) full) trunk.
\end{corollary}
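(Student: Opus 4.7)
The plan is to prove the two implications separately, with the reverse direction essentially immediate and the forward direction relying on Zorn's lemma applied to the family of trunks of $P$. First I would handle the easy direction ($\Leftarrow$): if $P$ itself satisfies (trunk:i), then $P$ is a trunk of itself, and since every trunk of $P$ is by definition an induced suborder of $P$, it is a subset of $P$. So $P$ is the only maximal trunk, i.e.\ the maximum trunk. The ``full'' and ``relatively maximum'' qualifications will be addressed at the end.

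For the forward direction ($\Rightarrow$), assuming $P$ admits a maximum trunk $T$, the goal is to show $T = P$. The key step is to observe that the family of trunks of $P$, ordered by inclusion, is inductive: for any chain $\mathcal{C}$ of trunks, the union $T^{\star} = \bigcup_{S \in \mathcal{C}} S$ is again a trunk. Indeed, any three elements $a,b,c \in T^{\star}$ belong to a common $S \in \mathcal{C}$ (take the largest among three members of $\mathcal{C}$ that individually contain $a$, $b$, $c$), so (trunk:i) applied inside $S$ transfers to $T^{\star}$. Zorn's lemma then yields that every trunk of $P$ extends to a maximal one, which, by uniqueness, must coincide with $T$. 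Applying this to the singleton $\{x\}$ for each $x \in P$ — a trivial trunk — we deduce $x \in T$, hence $T = P$, so $P$ itself is a trunk.

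Finally, I would check that a $P$ that happens to be a trunk is automatically a relatively maximum full trunk. A maximal chain of $P$ exists by Zorn (unions of chains of chains are chains), and Theorem~\ref{theorem:isomorphic_maximal_chains} then forces all maximal chains of $P$ to be pairwise isomorphic, so each is a maximum chain and property (trunk:ii) holds for $P$; hence $P$ is a full trunk. Since any full trunk of $P$ is contained in $P$, the whole order $P$ is the unique maximal full trunk, i.e.\ relatively maximum. The only even mildly technical point in this proof is the Zorn application, but it poses no real obstacle because (trunk:i) is a universal first-order statement involving only three elements and is therefore preserved under directed unions.
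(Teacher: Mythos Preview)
Your proof is correct. The route differs somewhat from the paper's: the paper argues the forward direction by contrapositive, exhibiting a triple $x,y,z$ with $x\sim y$, $y\sim z$, $x<z$ whenever $P$ fails (trunk:i), and then noting that each of the three pairs $\{x,y\},\{y,z\},\{x,z\}$ sits inside a maximal trunk that necessarily omits the third element, giving at least two distinct maximal trunks. You instead argue directly: Zorn extends every trunk---in particular every singleton---to a maximal trunk, and uniqueness of the maximal trunk forces it to swallow all of $P$. Your version is cleaner and makes explicit the inductive-family step (preservation of (trunk:i) under directed unions) that the paper uses silently when it asserts that the pairs extend to maximal trunks; the paper's version, on the other hand, pinpoints the concrete obstruction $O_{obst}$ and thereby ties the corollary back to the forbidden-suborder viewpoint used throughout the section. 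For the ``full'' and ``relatively maximum'' addenda both arguments invoke Theorem~\ref{theorem:isomorphic_maximal_chains} in the same way.
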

\begin{demo}
Only the forward implication is almost non-trivial.
Consider the contrapositive.
Let us assume that \begin{math}P\end{math} is not a trunk.
Then there are three elements \begin{math}x,y,z \in P \text{ such that } x \sim y, y \sim z, \text{ but } x < z\end{math}.
Since any (induced sub-)order of size 2 is a trunk, there is at least three maximal trunks,
one containing each pair among \begin{math}x,y,z\end{math}
and excluding the third element.
The fact that a maximum trunk is a full trunk thus follows, since all maximal chains of the order are isomorphic,
and define the same cardinal/ordinal, hence are maximum chains.
Thus, a maximum trunk is a relatively maximum full trunk.
\end{demo}

\begin{remark}
\label{remark:max_chains_in_trunk}
A relatively maximum full trunk contains all maximum chains of an order.
\end{remark}

From Theorem~\ref{theorem:isomorphic_maximal_chains} and Remark~\ref{remark:max_chains_in_trunk},
we deduce:
\begin{corollary}
\label{corollary:relatively_maximum_full_trunk_as_an_union}
A relatively maximum full trunk is the union of all maximum chains of an order.
An order has a relatively maximum full trunk if and only if the union of all its maximum chains is a trunk.
\end{corollary}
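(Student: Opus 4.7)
The plan is to establish both assertions by identifying a relatively maximum full trunk, when it exists, with the set $U := \bigcup\{C : C \text{ is a maximum chain of } P\}$. For the first statement, let $T$ be a relatively maximum full trunk of $P$. Remark~\ref{remark:max_chains_in_trunk} immediately gives $U \subseteq T$. The substantive inclusion is $T \subseteq U$: one must show each $x \in T$ lies in some maximum chain of $P$. By Theorem~\ref{theorem:isomorphic_maximal_chains}, $x$ is contained in a chain $C_x \subseteq T$ isomorphic to a maximum chain of $P$, so $C_x$ has the greatest cardinality (respectively, in the well-founded case, the greatest ordinal) attained by any chain of $P$. However $C_x$ itself may not be maximal in $P$, and this is the gap that must be bridged.

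To bridge it, I would apply Zorn's lemma to the family of chains of $P$ containing $C_x$, ordered by inclusion; unions along totally ordered subfamilies are again chains, so a maximal element $\widetilde{C}_x \supseteq C_x$ exists. Since no chain can exceed the greatest cardinality (respectively ordinal), the inequality $|\widetilde{C}_x| \geq |C_x|$ is in fact an equality, and $\widetilde{C}_x$ attains the greatest cardinality while being maximal, hence is a maximum chain of $P$ by definition. Thus $x \in \widetilde{C}_x \subseteq U$, and $T \subseteq U$, completing the proof that $T = U$.

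The ``if and only if'' part follows quickly. For the forward direction, if a relatively maximum full trunk $T$ exists, then $T = U$ by what was just proved, so $U$ is a trunk. For the converse, assume $U$ is a trunk; since $U$ contains every maximum chain of $P$, it is a full trunk. Let $T'$ be any full trunk of $P$; applying the same Zorn-plus-cardinality argument to each $x \in T'$ shows $T' \subseteq U$. Hence $U$ is a full trunk containing every other full trunk, so it is the unique maximal full trunk of $P$, that is, a relatively maximum full trunk.

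The main obstacle is the one highlighted above: Theorem~\ref{theorem:isomorphic_maximal_chains} only produces chains in $T$ (or $T'$) isomorphic to a maximum chain, which in the infinite setting may fail to be maximal in $P$ and therefore may fail to be maximum chains of $P$. Enlarging such a chain within $P$ (not merely within $T$) via Zorn's lemma, and then using the greatest-cardinality (or greatest-ordinal) property to certify that the enlargement is in fact a maximum chain of $P$, is the essential step that makes the union of maximum chains the right object to consider.
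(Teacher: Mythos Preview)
Your argument is correct and follows the same line as the paper's proof. The paper's proof is terser: it simply asserts that the chain $C$ through $x$ (maximal in $T$, hence isomorphic to a maximum chain by Theorem~\ref{theorem:isomorphic_maximal_chains}) ``is included in some maximum chain $C''$'' of $P$, and leaves the second sentence of the corollary implicit. Your proof makes this step explicit via Zorn's lemma and the greatest-cardinality (resp.\ greatest-ordinal) bound, and you also spell out the converse of the biconditional by applying the same extension argument to an arbitrary full trunk $T'$ to get $T' \subseteq U$; this is exactly the right observation, and the paper does not write it out.
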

\begin{demo}
Any element \begin{math}x \in T\end{math} belongs to a maximal chain \begin{math}C\end{math} relatively to the full trunk \begin{math}T\end{math},
and \begin{math}C\end{math} is isomorphic to a maximum chain \begin{math}C'\end{math}of the order;
thus \begin{math}C\end{math} is included in some maximum chain \begin{math}C''\end{math},
and \begin{math}x \in C''\end{math}.
\end{demo}

Let us denote \begin{math}\RMFTrunk(P)\end{math} the relatively maximum full trunk of an order \begin{math}P\end{math},
if it exists.
\begin{proposition}
\label{proposition:exists_rmf_trunk}
If a finite order \begin{math}P\end{math} has property (itov),
then it has a relatively maximum full trunk \begin{math}\RMFTrunk(P)\end{math}.
\end{proposition}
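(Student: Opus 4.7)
The plan is to invoke Corollary~\ref{corollary:relatively_maximum_full_trunk_as_an_union}: it suffices to show that the induced suborder $T$ of $P$ on the union of all maximum chains of $P$ satisfies property (trunk:i). Since $P$ is finite, $\Height(P)$ is a positive integer $h$, a maximum chain is exactly a chain of cardinality $h$, and every element of $T$ lies in at least one such chain. Writing $\Down(u) = \{v \in \Domain(P) : v < u\}$ and $\Up(u) = \{v \in \Domain(P) : v > u\}$, property (i) of (itov) says: whenever $u \sim v$, one of the pairs $(\Down(u),\Up(u))$ and $(\Down(v),\Up(v))$ is componentwise contained in the other.

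I would then argue by contradiction: suppose there exist $x,y,z \in T$ with $x \sim y$, $y \sim z$ and, after possibly swapping $x$ and $z$, $x < z$. Apply (itov)(i) to the incomparable pair $\{x,y\}$: the inclusion $(\Down(x),\Up(x)) \subseteq (\Down(y),\Up(y))$ would force $z \in \Up(x) \subseteq \Up(y)$ and hence $y < z$, contradicting $y \sim z$; so the reverse inclusions hold, namely $\Down(y) \subseteq \Down(x)$ and $\Up(y) \subseteq \Up(x)$. A symmetric application to $\{y,z\}$ (the forbidden direction now giving $x \in \Down(z) \subseteq \Down(y)$, hence $x < y$) yields $\Down(y) \subseteq \Down(z)$ and $\Up(y) \subseteq \Up(z)$.

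The key step is then to exploit a maximum chain $C_y \subseteq T$ through $y$, which exists because $y \in T$. Every $w \in C_y \setminus \{y\}$ is comparable to $y$ and hence lies in $\Down(y) \cup \Up(y)$; by the inclusions just derived, $w$ sits on the same side of $x$ (and of $z$) as it does of $y$. Because $x$ and $z$ are incomparable with $y$, neither belongs to $C_y$, so $C' = (C_y \setminus \{y\}) \cup \{x, z\}$ has cardinality $|C_y| + 1 = h + 1$. It is a chain: every element of $C_y$ below $y$ is strictly below $x$ (hence below $z$), every element of $C_y$ above $y$ is strictly above $z$ (hence above $x$), and $x < z$ by assumption. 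This contradicts $\Height(P) = h$ and establishes (trunk:i), so Corollary~\ref{corollary:relatively_maximum_full_trunk_as_an_union} gives $\RMFTrunk(P) = T$.

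The only real subtlety I expect is bookkeeping: choosing the correct direction of inclusion in each of the two applications of (itov), and then checking that $x$ and $z$ can be reinserted ``in place of'' $y$ inside $C_y$ to yield a strictly longer chain. Finiteness of $P$ enters only through the guaranteed existence of a chain of cardinality $\Height(P)$; its failure in the infinite setting is precisely the reason why the analogue of this proposition, in the spirit of Theorem~\ref{theorem:itov_order_to_qr}, requires extra hypotheses on $\NeigbourhoodOrder(P)$.
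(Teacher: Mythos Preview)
Your argument is correct and is a genuinely different proof from the paper's. Both proofs invoke Corollary~\ref{corollary:relatively_maximum_full_trunk_as_an_union} and aim to show that the union of all maximum chains is a trunk, but the mechanisms diverge from there. The paper starts from an arbitrary \emph{maximal} full trunk $T$, picks an element $x$ on a maximum chain outside $T$, uses maximality of $T$ to find $y\in T$ with $x\sim y$ and $\Level(x)\neq\Level(y)$, and then exhibits an explicit four-element induced suborder isomorphic to $O_{obs1}$ or $O_{obs2}$ by intersecting two maximum chains with the levels of $x$ and $y$. Your proof instead verifies (trunk:i) directly on the union: from a hypothetical failure $x\sim y$, $y\sim z$, $x<z$ you use property~(i) of (itov) twice to force $\Down(y)\cup\Up(y)$ inside both $\Down(x)\cup\Up(x)$ and $\Down(z)\cup\Up(z)$, and then splice $x,z$ into a maximum chain through $y$ to manufacture a chain of length $h+1$.

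What each buys: the paper's route produces the obstruction explicitly and at the level of the ambient level decomposition, which is exactly what is needed for the later restatement of the proposition in terms of \emph{level-induced} suborders (Proposition~\ref{proposition:exists_rmf_trunk} restated). Your route is cleaner and more self-contained---it never touches the level function, never names the obstructions, and uses only the neighbourhood-inclusion form of (itov) plus a cardinality contradiction---but it would not immediately yield the level-induced refinement without further work.
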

\begin{demo}
Clearly, an order that is an antichain has property (itov) and has a relatively maximum full trunk since it is a trunk.
Hence, we can assume for the rest of the proof that the orders are not antichains,
and that maximum chains in them have length at least one.

Let \begin{math}P\end{math} be an order with property (itov),
we already know that at most one connected component is not reduced to a singleton.
We will prove that the union of its maximum chains is a trunk,
and by Corollary~\ref{corollary:relatively_maximum_full_trunk_as_an_union} conclude that
it has a relatively maximum full trunk.
Let \begin{math}T\end{math} be a maximal full trunk, it is an union of maximum chains
(in a finite order, a chain isomorphic to a maximum chain is a maximum chain).
Assume, for a contradiction, that there exists \begin{math}x \in P \setminus T\end{math},
and \begin{math}x \in C\end{math} a maximum chain of \begin{math}P\end{math}.
Since \begin{math}x\end{math} is not in \begin{math}T\end{math},
and \begin{math}T\end{math} is maximal,
there must exist \begin{math}y \in T\end{math}
such that \begin{math}x \sim y\end{math} and \begin{math}\Level(x) \neq \Level(y)\end{math}.
Let \begin{math}C'\end{math} be a maximum chain of \begin{math}P\end{math} included in \begin{math}T\end{math} containing \begin{math}y\end{math}.
Consider \begin{math}\{x'\} = (\Level(x) \cap C')\end{math}, and \begin{math}\{y'\} = (\Level(y) \cap C)\end{math}.
We observe that \begin{math}x,x',y,y'\end{math} are all distinct
(\begin{math}x,x'\end{math} and \begin{math}y,y'\end{math} are in distinct levels.
\begin{math}x\end{math} is outside of \begin{math}T\end{math}
but \begin{math}x'\end{math} is in \begin{math}C'\end{math} included in \begin{math}T\end{math}.
\begin{math}y \neq y'\end{math} because \begin{math}x \sim y\end{math} by definition of \begin{math}y\end{math},
and \begin{math}x,y' \in C\end{math} so \begin{math}x \not\sim y'\end{math}.).
We have \begin{math}x \sim y, x \sim x', y \sim y'\end{math},
and \begin{math}x \not\sim y'\end{math}, \begin{math}x' \not\sim y\end{math}.
Hence, clearly, the induced suborder \begin{math}x,x',y,y'\end{math} is isomorphic to one of the two obstructions:
\begin{math}O_{obs1} = (\{a,b,c,d\}, \{a < b, c < d\}) \equiv \Inv(O_{obs1})\end{math} if and only if \begin{math}x' \sim y'\end{math},
or \begin{math}O_{obs2} = (\{a,b,c,d\}, \{a < b, c < d, c < b\}) \equiv \Inv(O_{obs2})\end{math} if and only if \begin{math}x' \not\sim y'\end{math}.
It contradicts the fact that \begin{math}P\end{math} has property (itov);
hence \begin{math}T\end{math} is a maximal full trunk that contains all maximum chains
and we have the desired relatively maximum full trunk.
\end{demo}

Clearly the previous proposition is false for infinite (well-)orders.
Indeed \nn~ together with an element \begin{math}a\end{math} that is less than integers at least 2,
and incomparable with 0 and 1 has property (itov) since the neighbourhood of \begin{math}a\end{math}
is included in the neighbourhood of 0 and the neighbourhood of 1.
However, it contains three maximal full trunks, namely \nn~ (a chain),
\begin{math}\{a,1,2,3,...\}\end{math} (the union of two chains),
 and \begin{math}\{a,0,2,3,...\}\end{math} (the union of two chains).

\begin{openproblem}
With the previous counter-example, it is clear that \nn~ is more legitimate as a candidate for
relatively maximum full trunk than \begin{math}\{a,1,2,3,...\}\end{math}, because
\begin{math}\{a,1,2,3,...\}\end{math} contains the chain \begin{math}\{1,2,3,...\}\end{math},
that is maximal in \begin{math}\{a,1,2,3,...\}\end{math} but is not maximal in \begin{math}\nn \cup \{a\}\end{math}.
It suggests that, although all the maximal chains in a trunk are isomorphic,
the choice of the maximum chain(s) of the order that will be included in a full trunk are not equivalent,
and can probably be ordered such that there are full trunks that are ``relatively more maximum''
at least in some certain classes of orders.
We let as an open problem the choice of the good definition(s) for ``relatively more maximum'',
and the definition of the corresponding classes of orders.
\end{openproblem}

Relatively maximum full trunks are an important element of structure
but alone they say nothing about the rest of the order,
since it is possible to have such a trunk like a chain incomparable with any induced suborder,
provided that this trunk is sufficiently high to mask the suborder.
Hence, we will study the nature of the join of full trunks with other elements in (itov) orders.

\begin{definition}[regular to a trunk]
Let \begin{math}T\end{math} be a trunk of an order \begin{math}P\end{math},
an element \begin{math}x \in P\end{math} is said to be \emph{regular to} \begin{math}T\end{math},
when \begin{math}\forall y,z \in T\end{math} such that \begin{math}\Level_T(y) = \Level_T(z)\end{math}:
\begin{itemize}
\item \begin{math}x < y \Leftrightarrow x < z\end{math},
\item \begin{math}x > y \Leftrightarrow x > z\end{math},
\item \begin{math}x \sim y \vee x = y\Leftrightarrow x \sim z \vee x = z\end{math}.
\end{itemize}
An element regular to a trunk is ordered uniformly with each level of the trunk.
If \begin{math}x \not\in T\end{math}, it is equivalent to \begin{math}\OrderFunction(P)(x,y) = \OrderFunction(P)(x,z)\end{math}.
We could also express it in all cases by \begin{math}\Identify(\OrderFunction(P)(x,y), =, \sim) = \Identify(\OrderFunction(P)(x,z), =, \sim)\end{math},
in order to consider equality as a case of incomparability,
where the \begin{math}\Identify\end{math} function is the identity function of the first parameter
everywhere except for when the first parameter is equal to the second parameter,
in which case it is mapped to the third parameter.
\end{definition}
Clearly, each element of a trunk is regular to the trunk.
Moreover, an element regular to a trunk partitions the levels of the trunk
in at most three consecutives sets: the levels that are below the element,
the levels that are incomparable with the element,
and the levels that are above the element.

Not all elements must be regular to any maximal trunk in an (itov) order
(there is an (itov) order with 3 elements demonstrating this,
the reader should be able to find it :P).

\begin{lemma}
If a well-founded order has property (itov), then any element is regular to any full trunk.
\end{lemma}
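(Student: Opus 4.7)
The plan is to argue by contradiction and exhibit an induced copy of $O_{obs2}$, which is forbidden by characterization (v) of property (itov). First I would dispose of the easy case $x \in T$: by clause (trunk:i') of the previous lemma, two elements of $T$ at a common level are equal or incomparable, so whenever $y, z \in T$ share their level with $x$ the pairs $(x, y)$ and $(x, z)$ both fall in the ``equal or incomparable'' case, and when the common level of $y, z$ differs from $\Level(x)$ both pairs are strictly ordered the same way by (trunk:i'). Either way $x$ is regular to $T$.

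Assume therefore $x \notin T$ and, for contradiction, that there are $y, z \in T$ with $\Level(y) = \Level(z) = L$ but $\OrderFunction(P)(x, y) \neq \OrderFunction(P)(x, z)$ (up to the identification of $=$ and $\sim$ used in the definition of regular). By (trunk:i') for $T$, $y = z$ or $y \sim z$; equality would make the two order functions trivially coincide, so $y \sim z$. By swapping $y$ and $z$ and, if needed, replacing $P$ by $\Inv(P)$ to exploit the symmetry between $<$ and $>$, I can reduce to $x < y$. The alternatives for $x$ vs $z$ are then $x > z$, $x = z$, or $x \sim z$: transitivity with $y \sim z$ kills $x > z$ (it would force $z < y$) and kills $x = z$ (it would force $x \sim y$), so the only live subcase is $x < y$ and $x \sim z$.

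The key move uses that $T$ is a full trunk: pick a maximum chain $C$ of $P$ contained in $T$ and let $c_{L-1} \in C$ be its element at level $L - 1$. Note $L \geq 1$, otherwise $y$ is minimal in the well-founded $P$ and $x < y$ is impossible. By (trunk:i') applied in $T$, $c_{L-1} < y$ and $c_{L-1} < z$. I then claim $x \sim c_{L-1}$: if $x < c_{L-1}$ or $x = c_{L-1}$, transitivity with $c_{L-1} < z$ yields $x < z$, contradicting $x \sim z$; and if $x > c_{L-1}$, the level function, which is strictly order-preserving on a well-founded order (the lemma just above), forces $\Level(x) \geq L$, while $x < y$ forces $\Level(x) < L$. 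Both alternatives fail, so $x \sim c_{L-1}$.

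Finally, the four elements $x, c_{L-1}, y, z$ have exactly the comparabilities $x < y$, $c_{L-1} < y$, $c_{L-1} < z$ and the incomparabilities $x \sim c_{L-1}$, $x \sim z$, $y \sim z$; mapping $(a, b, c, d) \mapsto (x, y, c_{L-1}, z)$ exhibits an induced copy of $O_{obs2} = (\{a,b,c,d\}, \{a < b, c < d, c < b\})$, contradicting property (itov). The main obstacle is really the bookkeeping of the case reductions and, in particular, confirming that well-foundedness is exactly what kills the otherwise delicate scenario $x > c_{L-1}$; without it, $x$ could sit at a level above $L-1$ but be incomparable with $y$ via a longer route, and the obstruction argument would fail.
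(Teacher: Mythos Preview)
Your approach is essentially the paper's: assume regularity fails at some $x$ with witnesses $y,z\in T$, reduce to $x\not\sim y$, $x\sim z$, then locate a fourth point in $T$ so that the four elements induce $O_{obs2}$. Two technical gaps remain, though.

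First, the reduction by ``replacing $P$ by $\Inv(P)$'' is not legitimate here: well-foundedness is not preserved under inversion ($\omega$ is well-founded, $\Inv(\omega)$ is not), so you cannot invoke a level function on $\Inv(P)$. The case $x>y$ must be argued directly. Second, and relatedly, your fourth point $c_{L-1}$ presumes that $L$ is a successor ordinal; in a general well-founded order $L=\Level(y)$ may well be a limit ordinal, and then ``level $L-1$'' is undefined. Your inference $x>c_{L-1}\Rightarrow\Level(x)\geq L$ also relies on this.

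The paper sidesteps both issues at once by taking the fourth point to be any $x'\in T$ with $\Level_P(x')=\Level_P(x)$; such an $x'$ exists precisely because the full trunk contains a maximum chain meeting every level of $P$. Then $x\sim x'$ is automatic (same level, and $x\neq x'$ since $x\notin T\ni x'$), while (trunk:i') inside $T$ gives $\OrderFunction(P)(x',y)=\OrderFunction(P)(x',z)$, both strict and agreeing with $\OrderFunction(P)(x,y)$ since $x$ and $x'$ share a level and are both comparable to $y$. This produces the $O_{obs2}$ obstruction uniformly, whether $x<y$ or $x>y$, and for arbitrary ordinal $L$. Your argument is easily repaired by making this same choice of fourth point.
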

\begin{demo}
Let \begin{math}P\end{math} be a well-founded order with property (itov),
and \begin{math}T\end{math} be a full trunk of \begin{math}P\end{math}.
Let us now assume for a contradiction that \begin{math}P \setminus T\end{math}
contains an element \begin{math}x\end{math} that is not regular to \begin{math}T\end{math}.
Let \begin{math}y,z \in T, y \neq z\end{math} be such that \begin{math}\Level(y) = \Level(z)\end{math}
and \begin{math}\OrderFunction(P)(x,y) \neq \OrderFunction(P)(x,z)\end{math}.
If \begin{math}x \not\sim y \text{ and } x \not\sim z\end{math},
then by transitivity \begin{math}y < z \text{ or } z < y\end{math}, a contradiction.
Thus, without loss of generality, \begin{math}x \not\sim y \text{ and } x \sim z\end{math},
and \begin{math}\Level(x) \neq \Level(y) = \Level(z)\end{math}.
Let \begin{math}x' \in (\Level(x) \cap T)\end{math} (this is where we use the fact that the trunk is full).
We have \begin{math}\OrderFunction(P)(x,y) = \OrderFunction(P)(x',y)\end{math},
because \begin{math}x \not\sim y \text{ and } x' \not\sim y\end{math},
and the levels of \begin{math}x,x'\end{math} are equals.
\newline Moreover \begin{math}\OrderFunction(P)(x',y) = \OrderFunction(P)(x',z)\end{math},
and \begin{math}x' \not\sim z\end{math}.
Clearly, the induced suborder \begin{math}x,x',y,z\end{math} is isomorphic to the obstruction
\begin{math}O_{obs2}\end{math}.
\end{demo}

The reciprocal is false, as demonstrates \begin{math}O_{obs1}\end{math}.
Equivalently, one can observe that the property that any element is regular to any full trunk
is not hereditary (consider an order \begin{math}P\end{math} with two connected components: a chain of size 3,
and \begin{math}O_{obs2}\end{math}; if you remove an element in the chain,
then full trunks in \begin{math}O_{obs2}\end{math} become full trunks in \begin{math}P\end{math}).
Hence we shall seek another regularity to characterize well-founded (itov) order.

Observe that in the proofs of the two preceding proposition and lemma,
we found \begin{math}O_{obs1}\end{math} and \begin{math}O_{obs2}\end{math} obstructions
with an additional constraint: each level of the obstruction was included
in a level of the bigger order,
 and no two levels of the obstruction were included in the same level of the bigger order.
A little thought about this shows that there are at least 4 kinds of suborders:
\begin{itemize}
\item A suborder \begin{math}P'\end{math} of an order \begin{math}P\end{math}
      is such that \begin{math}\Domain(P') \subseteq \Domain(P)\end{math},
      \begin{math}\forall x,y \in \Domain(P'), x <_{P'} y \Rightarrow x <_{P} y\end{math},
      and \begin{math}\forall x,y \in \Domain(P'), x >_{P'} y \Rightarrow x >_{P} y\end{math}.
      Since any partial order is a suborder of a total order and we cannot say that total orders do not have structure,
      it is clear that the general notion of suborder is too weak.
\item An induced suborder \begin{math}P'\end{math} of an order \begin{math}P\end{math}
      is such that \begin{math}\Domain(P') \subseteq \Domain(P)\end{math},
      and \begin{math}\forall x,y \in \Domain(P'), \OrderFunction(P')(x,y) = \OrderFunction(P)(x,y)\end{math}.
      This is what we used so far (in preliminary versions of this article,
      we just wrote suborder instead of induced suborder, because the remark at the end of the previous item was obvious,
      and we did not think that it would be better to separate suborder and induced suborder,
      like it is customary to distinguish subgraph and induced subgraph in graph theory).
\item A \emph{level-induced suborder} \begin{math}P'\end{math} of a well-founded order \begin{math}P\end{math}
      is such that \begin{math}\Domain(P') \subseteq \Domain(P)\end{math},
      \begin{math}\forall x,y \in \Domain(P'), \OrderFunction(P')(x,y) = \OrderFunction(P)(x,y)\end{math},
      and \begin{math}\forall x,y \in \Domain(P'), \Level_{P'}(x) = \Level_{P'}(y) \Leftrightarrow \Level_{P}(x) = \Level_{P}(y)\end{math}.
      This is the appropriate notion of suborder for the two previous proposition and lemma.
      (Note that we could also define two other kinds of level-induced suborder with
      \begin{math}\forall x,y \in \Domain(P'), \Level_{P'}(x) = \Level_{P'}(y) \Rightarrow \Level_{P}(x) = \Level_{P}(y)\end{math}
      \begin{math}\forall x,y \in \Domain(P'), \Level_{P'}(x) = \Level_{P'}(y) \Leftarrow \Level_{P}(x) = \Level_{P}(y)\end{math}.
      There is a simple proof by transfinite induction on the levels of \begin{math}P'\end{math} showing that
      \begin{math}\forall x,y \in \Domain(P'), \Level_{P'}(x) = \Level_{P'}(y) \Rightarrow \Level_{P}(x) = \Level_{P}(y)\end{math}
      implies \begin{math}\forall x,y \in \Domain(P'), \Level_{P'}(x) = \Level_{P'}(y) \Leftarrow \Level_{P}(x) = \Level_{P}(y)\end{math}.
      Moreover, the same proof shows that 
      \begin{math}\forall x,y \in \Domain(P'), \Level_{P'}(x) < \Level_{P'}(y) \Leftrightarrow \Level_{P}(x) < \Level_{P}(y)\end{math}.
      Thus only two kinds of level-induced suborder exists
      \begin{math}\forall x,y \in \Domain(P'), \Level_{P'}(x) = \Level_{P'}(y) (\Leftrightarrow \text{ or } \Rightarrow) \Level_{P}(x) = \Level_{P}(y)\end{math},
      and \begin{math}\forall x,y \in \Domain(P'), \Level_{P'}(x) = \Level_{P'}(y) \Leftarrow \Level_{P}(x) = \Level_{P}(y)\end{math}.  )
\item A \emph{consecutive level-induced suborder} \begin{math}P'\end{math} of a well-founded order \begin{math}P\end{math}
      is such that \begin{math}\Domain(P') \subseteq \Domain(P)\end{math},
      \begin{math}\forall x,y \in \Domain(P'), \OrderFunction(P')(x,y) = \OrderFunction(P)(x,y)\end{math},
      \begin{math}\forall x,y \in \Domain(P'), \Level_{P'}(x) = \Level_{P'}(y) \Leftrightarrow \Level_{P}(x) = \Level_{P}(y)\end{math},
      and \begin{math}\forall x,y \in \Domain(P'), \Level_{P'}(x) + 1 = \Level_{P'}(y) \Leftrightarrow \Level_{P}(x) + 1 = \Level_{P}(y)\end{math}.
\end{itemize}

Let us restate the previous results.
\setcounter{definition}{7}
\begin{proposition}
If a finite order \begin{math}P\end{math} has no level-induced suborder isomorphic to
\begin{math}O_{obs1}\end{math} or \begin{math}O_{obs2}\end{math},
then it has a relatively maximum full trunk \begin{math}\RMFTrunk(P)\end{math}.
\end{proposition}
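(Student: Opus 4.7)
The plan is to recycle, essentially verbatim, the proof of Proposition~\ref{proposition:exists_rmf_trunk} and simply observe that the four-element obstruction it constructs is not merely an induced suborder but in fact a \emph{level-induced} suborder. So the same argument already works under the weaker hypothesis.

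First I would dispose of the degenerate case: if $P$ is an antichain, then $P$ is itself a full trunk, so $\RMFTrunk(P) = P$. Otherwise, let $T$ be a maximal full trunk of $P$ (one exists by finiteness, since any maximum chain is a full trunk and we can enlarge greedily). By Corollary~\ref{corollary:relatively_maximum_full_trunk_as_an_union} it suffices to prove that $T$ already contains every maximum chain of $P$. Suppose, for a contradiction, that there is a maximum chain $C$ of $P$ and an element $x \in C \setminus T$. By maximality of $T$, adding $x$ to $T$ breaks (trunk:i), so there must exist $y \in T$ with $x \sim y$ and $\Level_P(x) \neq \Level_P(y)$. Choose a maximum chain $C' \subseteq T$ through $y$, and define $x'$ to be the unique element of $\Level_P(x) \cap C'$ and $y'$ to be the unique element of $\Level_P(y) \cap C$.

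Next I would argue, exactly as in the proof of Proposition~\ref{proposition:exists_rmf_trunk}, that $x, x', y, y'$ are pairwise distinct and satisfy $x \sim x'$, $y \sim y'$, $x \sim y$, $x \not\sim y'$, $x' \not\sim y$. Therefore the induced suborder on $\{x, x', y, y'\}$ is isomorphic to $O_{obs1}$ in case $x' \sim y'$ and to $O_{obs2}$ in case $x' \not\sim y'$ (under either assumption on the direction of comparability, up to the involution $\Inv$).

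The only addition compared with Proposition~\ref{proposition:exists_rmf_trunk} is the verification that this obstruction is level-induced. This is the step that deserves attention, but it is straightforward: both $O_{obs1}$ and $O_{obs2}$ have height $2$, and in each of them the two levels are $\{a, c\}$ and $\{b, d\}$. Under our isomorphism these correspond to $\{x, x'\}$ and $\{y, y'\}$, which are precisely the intersections of $\{x, x', y, y'\}$ with the two distinct levels $\Level_P(x)$ and $\Level_P(y)$ of $P$. Hence the level partition induced on the four-element suborder by $P$ coincides with the intrinsic level partition of the obstruction, so the obstruction is level-induced, contradicting the hypothesis. This forces $T$ to contain every maximum chain, and $T = \RMFTrunk(P)$.

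The only real obstacle is bookkeeping: making sure the four elements are genuinely distinct (which uses that $x \notin T$ while $x' \in C' \subseteq T$, and that $y \neq y'$ because $x \sim y$ whereas $x$ is comparable with every element of $C$), and making sure the level partition of $\{x, x', y, y'\}$ as a standalone order matches its level partition inside $P$. Both verifications are immediate from the choices of $x', y'$, so no new idea is required beyond the original proof.
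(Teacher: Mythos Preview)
Your proposal is correct and follows exactly the approach of the paper: the paper simply observes that in the proof of Proposition~\ref{proposition:exists_rmf_trunk} the four-element obstruction $\{x,x',y,y'\}$ was constructed so that $\Level_P(x)=\Level_P(x')$ and $\Level_P(y)=\Level_P(y')$ with $\Level_P(x)\neq\Level_P(y)$, hence is already level-induced. You have spelled out this level-matching verification more carefully than the paper does, but the argument is identical.
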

\setcounter{definition}{10}
\begin{lemma}
If a well-founded order has no level-induced suborder isomorphic to \begin{math}O_{obs2}\end{math},
then any element is regular to any full trunk.
\end{lemma}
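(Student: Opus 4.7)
The plan is to mimic the proof of the earlier version of this lemma (the one under hypothesis (itov)) and observe that only the $O_{obs2}$ obstruction it produces was ever used; moreover that obstruction is automatically level-induced. So the weakening from property (itov) to ``no level-induced suborder isomorphic to $O_{obs2}$'' costs nothing.

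Concretely, I would assume for a contradiction that some $x \in P \setminus T$ is not regular to the full trunk $T$, giving $y, z \in T$ with $\Level(y) = \Level(z)$ and $\OrderFunction(P)(x,y) \neq \OrderFunction(P)(x,z)$. If both $x \not\sim y$ and $x \not\sim z$, transitivity together with $\Level(y) = \Level(z)$ forces $y$ and $z$ comparable, which is impossible. Swapping $y$ and $z$ if necessary, one may assume $x \not\sim y$ and $x \sim z$, hence $\Level(x) \neq \Level(y) = \Level(z)$. Fullness of $T$ (property (trunk:ii')) allows me to pick $x' \in \Level(x) \cap T$. Because $x, x'$ share a level and $x \not\sim y$, one has $\OrderFunction(P)(x,y) = \OrderFunction(P)(x',y)$; and since $x' \in T$ is regular to $T$, also $\OrderFunction(P)(x',y) = \OrderFunction(P)(x',z)$. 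In particular $x' \not\sim z$, and a direct check of the six pairwise comparabilities shows that the suborder induced on $\{x, x', y, z\}$ is isomorphic to $O_{obs2}$ (the comparability $x \sim z$ rules out $O_{obs1}$).

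The one new step, and the point where I expect to be the most careful, is to verify that this copy of $O_{obs2}$ is a \emph{level-induced} suborder of $P$. In $O_{obs2}$ the two minimal elements occupy one level and the two maximal elements the other, so the level-equivalence classes of the suborder are exactly $\{x, x'\}$ and $\{y, z\}$. By construction both elements of the first class lie at $P$-level $\Level(x)$ and both elements of the second class at $P$-level $\Level(y)$, with $\Level(x) \neq \Level(y)$. Hence two points of the suborder share a level in the suborder iff they share a level in $P$, which is precisely the level-induced condition. This contradicts the hypothesis and completes the proof. The main obstacle is really just this bookkeeping: making sure the two levels of the constructed obstruction pull back cleanly to two distinct levels of $P$, which is guaranteed by the pigeonhole choice of $x' \in \Level(x) \cap T$ together with $\Level(x) \neq \Level(y)$.
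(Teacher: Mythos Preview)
Your proposal is correct and takes essentially the same approach as the paper: the paper proves the original lemma under hypothesis (itov), then explicitly remarks that the obstruction produced in that proof is in fact level-induced (each level of the obstruction sits inside a single level of $P$, and distinct obstruction-levels sit in distinct $P$-levels), and simply restates the lemma under the weaker hypothesis. Your write-up spells out the level-induced verification that the paper leaves as a one-line observation, but the argument is identical.
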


We continue with another notion of regularity for well-founded orders.

\begin{definition}[up-regular in a well-founded order]
Let \begin{math}P\end{math} be a well-founded order,
an element \begin{math}x \in P\end{math} is said to be \emph{up-regular in} \begin{math}P\end{math},
 relatively to a subset \begin{math}S\end{math} of \begin{math}P\end{math},
when \begin{math}\forall y,z \in S\end{math} such that \begin{math}\Level(x) < \Level(y) = \Level(z)\end{math}:
\begin{displaymath}\OrderFunction(P)(x,y) = \OrderFunction(P)(x,z).\end{displaymath}
If \begin{math}S = P\end{math}, we just say \emph{up-regular in} \begin{math}P\end{math}.
We say that a well-founded order \begin{math}P\end{math} is \emph{up-regular} if any element of \begin{math}P\end{math} is \emph{up-regular in} \begin{math}P\end{math}.
\end{definition}

\begin{lemma}
A well-founded order is up-regular if and only if
it excludes level-induced suborders isomorphic to
\begin{math}O_{obs1}\end{math} or \begin{math}O_{obs2}\end{math}.
\end{lemma}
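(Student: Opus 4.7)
The plan is to prove both directions of the equivalence using the same four-element obstructions. For the reverse direction, suppose \begin{math}P\end{math} contains a level-induced copy of \begin{math}O_{obs1}\end{math} or \begin{math}O_{obs2}\end{math} with elements labeled \begin{math}a,b,c,d\end{math} as in the definitions. In both obstructions we have \begin{math}a < b\end{math} and \begin{math}a \sim d\end{math}, while \begin{math}b\end{math} and \begin{math}d\end{math} share a level strictly above the level of \begin{math}a\end{math} inside the induced suborder. Because the embedding is level-induced, these level equalities and strict level inequalities transfer to \begin{math}P\end{math}, so \begin{math}a\end{math} witnesses the failure of up-regularity via the pair \begin{math}(b,d)\end{math}.

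For the forward direction I proceed by contrapositive. Assume \begin{math}P\end{math} is not up-regular: there exist \begin{math}x,y,z \in P\end{math} with \begin{math}\Level(x) < \Level(y) = \Level(z)\end{math} and \begin{math}\OrderFunction(P)(x,y) \neq \OrderFunction(P)(x,z)\end{math}. Since the strict level inequality forbids \begin{math}x = y\end{math} or \begin{math}x > y\end{math} (and similarly for \begin{math}z\end{math}), both order values lie in \begin{math}\{<, \sim\}\end{math}, so without loss of generality \begin{math}x < y\end{math} and \begin{math}x \sim z\end{math}. The key preliminary fact I need is that in any well-founded order, for every element \begin{math}w\end{math} and every ordinal \begin{math}\alpha < \Level(w)\end{math} there exists \begin{math}v < w\end{math} with \begin{math}\Level(v) = \alpha\end{math}; this follows by transfinite induction on \begin{math}\alpha\end{math}, taking in the successor and limit cases a minimal element of \begin{math}\{u \leq w : \Level(u) \geq \alpha\}\end{math} and checking that its level cannot strictly exceed \begin{math}\alpha\end{math} by minimality. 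Applied with \begin{math}w = z\end{math} and \begin{math}\alpha = \Level(x)\end{math} this produces some \begin{math}c < z\end{math} with \begin{math}\Level(c) = \Level(x)\end{math}.

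I then check the four-element induced suborder on \begin{math}\{x,c,y,z\}\end{math}. Since \begin{math}c < z\end{math} while \begin{math}x \sim z\end{math}, we have \begin{math}c \neq x\end{math}; two distinct elements sharing a level are always incomparable in a well-founded order, so \begin{math}x \sim c\end{math}, and likewise \begin{math}y \sim z\end{math}. Only the relation between \begin{math}c\end{math} and \begin{math}y\end{math} remains open: if \begin{math}c < y\end{math}, the mapping \begin{math}a \mapsto x,\ b \mapsto y,\ c \mapsto c,\ d \mapsto z\end{math} exhibits a copy of \begin{math}O_{obs2}\end{math}; if instead \begin{math}c \sim y\end{math}, the same mapping exhibits a copy of \begin{math}O_{obs1}\end{math}. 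In both cases the copy is level-induced because \begin{math}x,c\end{math} share one level of \begin{math}P\end{math} and \begin{math}y,z\end{math} share a strictly higher one, matching the two-level structure of the obstructions. The only real technical point is the preliminary transfinite claim about attaining every lower level inside \begin{math}\{v \leq z\}\end{math}; beyond that, the case analysis is routine.
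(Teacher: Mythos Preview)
Your proof is correct and follows essentially the same approach as the paper's own proof. Both argue the forward direction by contrapositive, producing an element at $\Level(x)$ lying below $z$ (the paper phrases this as choosing a chain through $z$ meeting every lower level, while you isolate the single needed element directly), and both dispose of the reverse direction by observing that a bottom element of either level-induced obstruction witnesses the failure of up-regularity with respect to the two top elements.
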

\begin{demo}
Let \begin{math}P\end{math} be a well-founded order.
We first prove that if it excludes level-induced suborders isomorphic to
\begin{math}O_{obs1}\end{math} or \begin{math}O_{obs2}\end{math},
then it is up-regular.
By contrapositive, assume that \begin{math}x \in \Domain(P)\end{math} is not up-regular.
Without loss of generality, let \begin{math}y,z \in \Domain(P)\end{math} be such that \begin{math}\Level(x) < \Level(y) = \Level(z)\end{math},
and \begin{math}x < y, x \sim z\end{math}.
There exists a chain \begin{math}C\end{math} containing \begin{math}z\end{math} and intersecting each level below \begin{math}\Level(z)\end{math}.
Since \begin{math}x \sim z\end{math}, \begin{math}x \not\in C\end{math}.
Let \begin{math}\{x'\} = (C \cap \Level(x))\end{math}.
Clearly, \begin{math}x,y,z,x'\end{math} are all distinct, \begin{math}x < y, x' < z, x \sim x', y \sim z, x \sim z\end{math}.
Thus we have one of the two obstructions, whatever the choice between \begin{math}x' < y\end{math} or \begin{math}x' \sim y\end{math}.

We now prove that if \begin{math}P\end{math} is up-regular,
then it excludes level-induced suborders isomorphic to
\begin{math}O_{obs1}\end{math} or \begin{math}O_{obs2}\end{math}.
Again by contrapositive, assume that there is a level-induced suborder isomorphic to
\begin{math}O_{obs1}\end{math} or \begin{math}O_{obs2}\end{math},
it is trivial to see that at least one element in the bottom level of this level-induced
suborder is not up-regular to the top level of this level-induced
suborder, hence not-regular to the corresponding level of \begin{math}P\end{math}.
\end{demo}

\begin{corollary}
If a well-founded order has property (itov), then it is up-regular.
\end{corollary}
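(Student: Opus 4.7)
The plan is very short: this corollary is essentially an immediate consequence of the preceding lemma, combined with the definition of property (itov) and the hierarchy of ``suborder'' notions just discussed.

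First I would recall that by (v) of the equivalence lemma characterizing (itov), a (well-founded) order \begin{math}P\end{math} with property (itov) contains no induced suborder isomorphic to \begin{math}O_{obs1}\end{math} or \begin{math}O_{obs2}\end{math}. Next I would invoke the trivial observation from the classification of suborder notions: every level-induced suborder is in particular an induced suborder. Hence \begin{math}P\end{math} also contains no level-induced suborder isomorphic to \begin{math}O_{obs1}\end{math} or \begin{math}O_{obs2}\end{math}. Finally, applying the previous lemma (characterizing up-regularity by the absence of such level-induced obstructions), we conclude that \begin{math}P\end{math} is up-regular.

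There is essentially no obstacle: the work has been done in the preceding lemma, and the only thing to check is the (entirely routine) implication ``induced \begin{math}\Rightarrow\end{math} level-induced in the contrapositive direction''. I would just make this one-line containment explicit so that the reader sees why ruling out an induced copy also rules out a level-induced copy. No additional structural argument, no transfinite induction, and no new regularity notion is required.
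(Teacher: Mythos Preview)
Your proposal is correct and matches the paper's approach: the paper states this corollary without proof, treating it as immediate from the preceding lemma together with the fact that every level-induced suborder is in particular an induced suborder. Your one-line justification is exactly the intended argument.
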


The reciprocal is false. Indeed the following order is up-regular:
\begin{math}(\{b_1, b_1', b_2, t_1, t_1', t_2\},\newline
      \{b_1 < t_1, b_1 < t_1', b_1 < t_2, b_1' < t_1, b_1' < t_1', b_1' < t_2, t_1' < t_2, b_2 < t_2\})\end{math}.
The first three elements are on level 0, the last element is on level 2, and the two others are on level 1.
\begin{math}\{b_1, b_2, t_1,t_2\}\end{math} yields the desired obstruction.
A close look at this example suggests the following lemma.

\begin{lemma}
If a well-founded order is up-regular, then it does not have an induced suborder isomorphic to \begin{math}O_{obs1}\end{math}.
\end{lemma}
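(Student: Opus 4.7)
The plan is to argue by contradiction: suppose the well-founded order $P$ is up-regular yet contains four elements $a, b, c, d$ forming an induced copy of $O_{obs1}$, with $a < b$, $c < d$, and all four remaining pairs incomparable. From $a < b$ and $c < d$ we immediately get $\Level(a) < \Level(b)$ and $\Level(c) < \Level(d)$, and the whole argument will be driven by a case split on the relative position of $\Level(b)$ and $\Level(d)$.

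First I would dispose of the easy case $\Level(b) = \Level(d)$. Here $b$ and $d$ lie at the same level, strictly above $\Level(a)$, yet $a < b$ while $a \sim d$; this is exactly the defect of up-regularity at $a$, so this case closes immediately. (Up to swapping the two chains, one can also conclude from $c$'s perspective.)

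The main work is in the two remaining cases. When $\Level(b) < \Level(d)$, I would use the fact that in a well-founded order, every level strictly below an element is realized by some strict predecessor of that element, to produce an element $e < d$ with $\Level(e) = \Level(b)$. One then checks that $e \neq b$: otherwise $b < d$, contradicting $b \sim d$. Now apply up-regularity of $a$ to the pair $b, e$, which lies at the common level $\Level(b) > \Level(a)$: since $a < b$, we conclude $a < e$, and then $a < e < d$ gives $a < d$ by transitivity, contradicting $a \sim d$. The case $\Level(b) > \Level(d)$ is entirely symmetric, obtained by exchanging the roles of $(a,b)$ with $(c,d)$: take $e < b$ at level $\Level(d)$, apply up-regularity of $c$, and derive $c < b$, contradicting $b \sim c$.

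The main obstacle is a small technical one rather than a conceptual one: one must be sure that the auxiliary element $e$ at the intermediate level truly exists. This is the only non-trivial fact used about well-founded orders, and it follows by transfinite induction from the standard definition of $\Level$ (either $\Level(x) = \alpha + 1$ forces a predecessor of $x$ at level $\alpha$, or $\Level(x)$ is a limit, in which case arbitrarily high levels below $\Level(x)$ are realized under $x$, and one descends from there). Once this auxiliary lemma is invoked cleanly, the three-way case split on $\Level(b)$ versus $\Level(d)$ does all the remaining work.
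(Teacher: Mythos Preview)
Your proof is correct and follows essentially the same approach as the paper's. Both argue by contradiction, first rule out the case where the two ``top'' elements $b$ and $d$ sit at the same level (an immediate up-regularity violation), and then, assuming without loss of generality $\Level(b) < \Level(d)$, produce an auxiliary element at level $\Level(b)$ below $d$ (your $e$, the paper's $t_1'$ taken from a chain descending from $d$) to force $a < d$ via up-regularity and transitivity.
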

\begin{demo}
Let \begin{math}P\end{math} be a well-founded order that is up-regular.
Assume for a contradiction that it contains an obstruction \begin{math}O_{obs1}\end{math}.
Since the bottom elements \begin{math}b_1,b_2\end{math} are not regular
to the top elements \begin{math}t_1, t_2\end{math} in \begin{math}O_{obs1}\end{math},
both top elements are in distinct levels.
Assume without loss of generality that \begin{math}b_1 < t_1, b_2 < t_2, \Level(t_1) < \Level(t_2), b_1 \sim b_2, t_1 \sim t_2\end{math}.
We must have \begin{math}b_1 \sim t_2\end{math} in \begin{math}O_{obs1}\end{math} but we must also have \begin{math}b_1 < t_2\end{math}
since by up-regularity \begin{math}b_1 < t_1'\end{math},
where \begin{math}t_1'\end{math} belongs to a maximal chain between level 0 and \begin{math}\Level(t_2)\end{math} containing \begin{math}t_2\end{math}.
\end{demo}

The reciprocal is false, as demonstrates \begin{math}O_{obs2}\end{math}.
Together with the example before this lemma, it proves that up-regular is not an hereditary property.
There are induced suborders of up-regular orders that are not up-regular.

We observe that the previous lemma is true, although there is an order with an induced suborder isomorphic
to \begin{math}O_{obs1}\end{math}, but no level-induced suborder isomorphic to \begin{math}O_{obs1}\end{math}:
\begin{math}(\{b_1, b_2, b_3, m_2, t_1, t_2, u_3\},\newline
      \{b_1 < t_1, b_2 < m_2, b_2 < t_1, b_2 < t_2, b_2 < u_3,
        m_2 < t_1, m_2 < t_2, m_2 < u_3,
        t_2 < u_3, b_3 < u_3
       \})\end{math}
(letters give level b,m,t,u, indices give column 1,2,3).

\begin{openproblem}
Characterize finite orders that are induced suborders of any well-founded order if and only if
they are (consecutive) level-induced suborders of this well-founded order.
Examples: chains, antichains of size 1 and 2.
Counter-examples: antichains of size at least 3.
\end{openproblem}

Clearly, if we do not require that \begin{math}\Level(x) < \Level(y) = \Level(z)\end{math}, and define
\emph{regular} with \begin{math}\Level(x) \neq \Level(y) = \Level(z)\end{math},
or \emph{down-regular} with \begin{math}\Level(x) > \Level(y) = \Level(z)\end{math},
 the (down-) regular well-founded orders are the well-founded trunks.
Thus, (down-)regular is hereditary whilst up-regular is not.

Up-regular orders have another nice characterization.
\begin{theorem}
A well-founded order \begin{math}P\end{math} is up-regular
if and only if for any trunk \begin{math}T\end{math},
if there is a highest level \begin{math}L\end{math} of \begin{math}P\end{math}
that \begin{math}T\end{math} intersects,
then \begin{math}T \cup L\end{math} is a trunk.
In particular, in an up-regular-order \begin{math}P\end{math},
any maximal trunk \begin{math}T\end{math} contains the highest level \begin{math}L\end{math} of \begin{math}P\end{math}
that it intersects, if such a level is defined.
\end{theorem}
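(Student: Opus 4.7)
The plan is to prove the two directions separately and then derive the ``in particular'' clause from the forward direction. For the forward implication, assume $P$ is up-regular, let $T$ be a trunk, and let $L$ be the highest $P$-level that $T$ meets; write $\ell = \Level_P(L)$. I want to verify (trunk:i) for $T \cup L$, so I take $x,y,z \in T\cup L$ with $x\sim y$ and $y\sim z$ and assume for contradiction that $x\not\sim z$. After the symmetric WLOG $x<z$, so $\Level_P(x)<\Level_P(z)$, the constraints on membership force $z\in L$ (at level $\ell$) and $x\in T$ at some level strictly below $\ell$. Splitting on where $y$ lies: if $y\in L$, or if $y\in T\cap L$, a single application of up-regularity to $x$ against the two level-$\ell$ elements $y,z$ compares $x\sim y$ with $x<z$ and gives the contradiction directly.

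The only substantive sub-case is $y\in T$ with $\Level_P(y)<\ell$, and this is where I would use an auxiliary witness $z'\in T\cap L$, which exists because $T$ meets $L$. First apply up-regularity to $y$ against the level-$\ell$ pair $z,z'$: from $y\sim z$ I get $y\sim z'$. Now $x,y,z'$ all live inside the trunk $T$ with $x\sim y$ and $y\sim z'$, so (trunk:i) inside $T$ yields $x\sim z'$. A second application of up-regularity, this time to $x$ against $z,z'$ (both at level $\ell$, $x$ strictly below), transports $x\sim z'$ to $x\sim z$, contradicting $x<z$. This double use of up-regularity, bridged by trunk-transitivity inside $T$, is the heart of the argument and is the step I expect to be the main obstacle: one has to realise that $T\cap L$ is a useful pivot even though the offending elements $y,z$ are spread between $T\setminus L$ and $L\setminus T$.

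For the reverse direction I argue the contrapositive. If $P$ fails to be up-regular, there are $x,y,z\in P$ with $\Level_P(x)<\Level_P(y)=\Level_P(z)=\ell$ and $\OrderFunction_P(x,y)\neq\OrderFunction_P(x,z)$; because $x$ is strictly lower, the only possible values are $<$ and $\sim$, so after swapping I may assume $x<y$ and $x\sim z$. Take $T=\{x,y\}$, which is a chain and hence trivially a trunk, and let $L$ be the level $\ell$ in $P$; then $\ell$ is the highest level that $T$ meets. Inside $T\cup L$ we have $x\sim z$ and $z\sim y$ (the latter since $z\neq y$ sit in the same level), while $x<y$, so (trunk:i) fails for $T\cup L$, as required. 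Finally, the ``in particular'' clause is immediate: if $T$ is maximal and $L$ exists, then $T\cup L$ is a trunk extending $T$, so maximality forces $T\cup L=T$, i.e.\ $L\subseteq T$.
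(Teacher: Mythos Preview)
Your reverse direction and the ``in particular'' deduction are correct and match the paper. The gap is in the forward direction: after reducing to $x<z$, you assert that ``the constraints on membership force $z\in L$'', but this is not justified. From $x<z$ you correctly get $\Level_P(x)<\Level_P(z)\leq\ell$ and hence $x\in T\setminus L$; however nothing prevents $z\in T\setminus L$ as well, provided $y\in L\setminus T$ (so that not all three lie in the trunk $T$). Concretely, $x,z\in T$ at levels strictly below $\ell$ with $x<z$, together with $y\in L\setminus T$ satisfying $y\sim x$ and $y\sim z$, is a configuration you have not excluded, and it is precisely the paper's first case.

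The repair uses the same pivot idea you already deployed in your substantive sub-case. Take $y'\in T\cap L$; apply up-regularity to $x$ (at level $<\ell$) against the level-$\ell$ pair $y,y'$ to turn $x\sim y$ into $x\sim y'$, and likewise apply up-regularity to $z$ against $y,y'$ to turn $y\sim z$ into $y'\sim z$. Now $x\sim y'$, $y'\sim z$, $x<z$ with all three in $T$, contradicting (trunk:i) for $T$. So your technique is the right one; you simply need one more branch in the split. The paper organises the forward direction as a three-way case analysis on which of $y,z$ (possibly both) lies in $L$, which makes the exhaustiveness of the case split transparent.
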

\begin{proof}
For the ``if'' part, we prove the contrapositive;
assume that \begin{math}P\end{math} is not up-regular:
\begin{math}x < y\end{math} and \begin{math}x \sim z\end{math} 
with \begin{math}\Level(y) = \Level(z)\end{math},
then \begin{math}T = \{x,y\}\end{math} is a trunk,
and clearly \begin{math}T \cup \Level(y)\end{math} is not a trunk
(in \begin{math}T \cup \Level(y)\end{math}, both \begin{math}x\end{math} and \begin{math}z\end{math}
would belong to the bottom level and \begin{math}y\end{math} belong to the top level
but \begin{math}y\end{math} would not be regular to the bottom level then).

Now we prove the ``only-if'' part.
Assume that \begin{math}P\end{math} is up-regular.
Let \begin{math}T\end{math} be a trunk,
\begin{math}L\end{math} be the highest level of \begin{math}P\end{math}
that it intersects,
and assume for a contradiction that \begin{math}T \cup L\end{math}
is not a trunk.
Let \begin{math}x,y,z \in (T \cup L)\end{math} be such that
\begin{math}x \sim y, y \sim z, x < z\end{math}.
Either one or two elements among \begin{math}x,y,z\end{math} belong to \begin{math}L\end{math}
(both \begin{math}T\end{math} and \begin{math}L\end{math} are trunks),
moreover \begin{math}x\end{math} cannot belong to \begin{math}L\end{math}
since it is less than \begin{math}z \in (T \cup L)\end{math}.
\begin{itemize}
\item If \begin{math}y \in L\end{math}, and \begin{math}x, z \in (T \setminus L)\end{math},
      then \begin{math}x \sim y, y \sim z, x < z\end{math} implies
      by up-regularity that \begin{math}x \sim y', y' \sim z\end{math}
      for any element \begin{math}y' \in (T \cap L)\end{math}
      contradicting the fact that \begin{math}T\end{math} is a trunk.
\item If \begin{math}z \in L\end{math}, and \begin{math}x, y \in (T \setminus L)\end{math},
      then \begin{math}x \sim y, y \sim z, x < z\end{math} implies
      by up-regularity that \begin{math}y \sim z', x < z'\end{math}
      for any element \begin{math}z' \in (T \cap L)\end{math}
      contradicting the fact that \begin{math}T\end{math} is a trunk.
\item If \begin{math}y,z \in L\end{math}, and \begin{math}x \in (T \setminus L)\end{math},
      then \begin{math}x \sim y, y \sim z, x < z\end{math} contradicts up-regularity
      of \begin{math}x\end{math} with respect to level \begin{math}L\end{math}.
\end{itemize}
\end{proof}

It is tempting to hope that more structural results on maximal trunks holds in (itov) or up-regular orders.
(Like ``Either \begin{math}T\end{math} has height 1 (all its elements are incomparable),
or \begin{math}T = \{x \in P \text{ such that } x < y \in L\} \cup L\end{math}.'')
The following example limits such structural results, even for (itov) orders:
Consider the order with 4 elements on level 0 (\begin{math}x_{0,0}\end{math} to \begin{math}x_{0,3}\end{math}),
3 elements on level 1 (\begin{math}x_{1,0}\end{math} to \begin{math}x_{1,2}\end{math}),
2 elements on level 2 (\begin{math}x_{2,0}\end{math} to \begin{math}x_{2,1}\end{math})
1 element on level 3 (\begin{math}x_{3,0}\end{math}),
such that \begin{math}x_{i,j} < x_{k,l}\end{math}
if and only if \begin{math}i < k\end{math} and \begin{math}j > 0\end{math}.
The following subsets are maximal trunks:
\begin{math}\{x_{0,0}, x_{1,0}, x_{2,0}, x_{2,1}\}\end{math},
and \begin{math}\{x_{0,1}, x_{0,2}, x_{0,3}, x_{1,0}, x_{2,0}, x_{2,1}\}\end{math}.

\begin{corollary}
For a well-founded order \begin{math}P\end{math}, the following properties are equivalent:
\begin{enumerate}[(i)]
\item \begin{math}P\end{math} is up-regular,
\item \begin{math}P\end{math} excludes level-induced suborders isomorphic to
\begin{math}O_{obs1}\end{math} and excludes level-induced suborders isomorphic to \begin{math}O_{obs2}\end{math},
\item \begin{math}P\end{math} excludes induced suborders isomorphic to
\begin{math}O_{obs1}\end{math} and excludes level-induced suborders isomorphic to \begin{math}O_{obs2}\end{math},
\item for any trunk \begin{math}T\end{math},
if there is a highest level \begin{math}L\end{math} of \begin{math}P\end{math}
that \begin{math}T\end{math} intersects,
then \begin{math}T \cup L\end{math} is a trunk.
\end{enumerate}
\end{corollary}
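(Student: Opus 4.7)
The plan is to assemble the equivalence from results already proven in the three statements immediately preceding the corollary, so almost no new work is required. Two of the equivalences are already literally theorems of the paper: (i) $\Leftrightarrow$ (ii) is the lemma that characterizes up-regularity by the exclusion of level-induced copies of $O_{obs1}$ and $O_{obs2}$, and (i) $\Leftrightarrow$ (iv) is the theorem stating that up-regularity is equivalent to the ``adding the top level to a trunk keeps it a trunk'' property. Hence the whole task reduces to splicing (iii) into the cycle, which I would do by establishing (iii) $\Rightarrow$ (ii) and (i) $\Rightarrow$ (iii).

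The implication (iii) $\Rightarrow$ (ii) is essentially free. Every level-induced suborder is in particular an induced suborder, so excluding induced copies of $O_{obs1}$ automatically excludes level-induced copies of $O_{obs1}$; the level-induced $O_{obs2}$ exclusion is identical in the two conditions. Thus (iii) is at least as strong as (ii).

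For (i) $\Rightarrow$ (iii), I would invoke two earlier results in tandem. The lemma stating that an up-regular well-founded order contains no induced copy of $O_{obs1}$ yields the first half of (iii) directly. The easy direction of the characterization lemma, (i) $\Rightarrow$ (ii), yields the second half: no level-induced copy of $O_{obs2}$. Conjoining the two conclusions gives (iii).

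The only subtlety worth flagging is the point already made in the paragraph preceding the corollary: in general excluding level-induced $O_{obs1}$ does not imply excluding induced $O_{obs1}$ (the explicit $7$-element example exhibits this gap). So condition (iii) looks strictly stronger than (ii) abstractly, and the real content of (ii) $\Leftrightarrow$ (iii) is that under the hypothesis of up-regularity this gap collapses. That collapse is exactly what the previous lemma on $O_{obs1}$-freeness of up-regular orders supplies; once it is in hand the corollary is a purely formal chaining of implications, with no genuine obstacle to overcome.
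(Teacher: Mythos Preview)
Your proposal is correct and matches the paper's intended approach: the corollary is stated without proof precisely because it is meant to be assembled from the three preceding results exactly as you describe, with (i)~$\Leftrightarrow$~(ii) and (i)~$\Leftrightarrow$~(iv) already established and (iii) spliced in via the trivial (iii)~$\Rightarrow$~(ii) and the lemma on $O_{obs1}$-freeness for (i)~$\Rightarrow$~(iii). Your remark about the $7$-element example showing why (ii)~$\Rightarrow$~(iii) genuinely requires the up-regularity detour is also exactly the point the paper is making in the paragraph before the corollary.
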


The two following results are corollaries of the previous results stated in terms of level-induced suborders.
\begin{proposition}
\label{proposition:exists_rmf_trunk2}
If a finite order \begin{math}P\end{math} is up-regular,
then it has a relatively maximum full trunk \begin{math}\RMFTrunk(P)\end{math}.
\end{proposition}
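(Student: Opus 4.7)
The plan is to observe that this proposition is an immediate corollary of results already established just before its statement, so the proof amounts to chaining two references rather than producing genuinely new combinatorial work. First I would invoke the characterization lemma stating that a well-founded order is up-regular if and only if it excludes level-induced suborders isomorphic to $O_{obs1}$ or $O_{obs2}$. Since a finite order is in particular well-founded, the hypothesis that $P$ is up-regular yields that $P$ excludes such level-induced obstructions.

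Next, I would apply the restated proposition (the one reading: ``If a finite order $P$ has no level-induced suborder isomorphic to $O_{obs1}$ or $O_{obs2}$, then it has a relatively maximum full trunk''), which gives directly the existence of $\RMFTrunk(P)$. Formally I would write something like: ``By the characterization lemma, the hypothesis implies that $P$ has no level-induced suborder isomorphic to $O_{obs1}$ or $O_{obs2}$; by Proposition \ref{proposition:exists_rmf_trunk}, $P$ therefore admits a relatively maximum full trunk $\RMFTrunk(P)$.''

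There is essentially no obstacle here, since the whole point of having introduced the notion of level-induced suborder, reproved the earlier proposition in that sharper language, and characterized up-regularity via the absence of level-induced obstructions, was precisely to make this corollary automatic. The only care required is to note that the hypothesis ``$P$ is finite'' (which implies well-founded) is needed both to apply the characterization lemma (stated for well-founded orders) and to apply the restated proposition (stated for finite orders). Pointing out explicitly that the original proof via finding a level-induced obstruction inside a non-full maximal trunk already produced the level-induced $O_{obs1}$ or $O_{obs2}$, rather than just an induced one, is what makes the sharper corollary valid; no re-running of that argument is needed.
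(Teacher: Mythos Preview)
Your proposal is correct and matches the paper's approach exactly: the paper presents this proposition explicitly as a corollary of the restated level-induced version of Proposition~\ref{proposition:exists_rmf_trunk} together with the characterization of up-regularity via exclusion of level-induced $O_{obs1}$ and $O_{obs2}$, and gives no further argument. Your write-up is in fact more detailed than the paper's single introductory sentence.
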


\begin{lemma}
If a well-founded order is up-regular, then any element is regular to any full trunk.
\end{lemma}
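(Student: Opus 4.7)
The plan is to fix a well-founded up-regular order $P$, a full trunk $T$ of $P$, an element $x \in P$, and two elements $y, z \in T$ with $\Level_T(y) = \Level_T(z)$, and to verify the three bi-implications in the definition of ``regular to $T$''. By the full-trunk characterization (trunk:i'), $\Level_T(y) = \Level_T(z)$ is equivalent to $\Level_P(y) = \Level_P(z)$, so $y$ and $z$ share a common $P$-level $\ell$. I then split into three cases according to the comparison of $\Level(x)$ with $\ell$.

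If $\Level(x) < \ell$, up-regularity of $x$ applied with the witnesses $y, z$ directly yields $\OrderFunction(P)(x,y) = \OrderFunction(P)(x,z)$, and we are done. If $\Level(x) = \ell$, I observe that in any well-founded order $a < b$ forces $\Level(a) < \Level(b)$, so elements at the same level are pairwise incomparable or equal; hence $x$ is incomparable with or equal to both $y$ and $z$, which is exactly the ``identify'' form of the regularity condition.

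The main (and only non-immediate) case is $\Level(x) > \ell$. Here I use the fullness of $T$: by (trunk:ii'), $T$ contains some element $x'$ with $\Level(x') = \Level(x)$. Two elements at the same level are incomparable or equal, so $x \sim x'$ or $x = x'$. Inside $T$, the trunk property (trunk:i') gives $x' > y$ and $x' > z$ (their levels are strictly ordered). Now I reverse the direction of the up-regularity appeal: I apply up-regularity to $y$, using the pair $(x', x)$ as witnesses. Since $\Level(y) < \Level(x) = \Level(x')$ and $y < x'$, up-regularity of $y$ forces $y < x$. Symmetrically, up-regularity of $z$ forces $z < x$. Hence $x > y$ and $x > z$, and the three bi-implications in the definition of regularity are satisfied.

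The hard part is exactly this last case: up-regularity controls order relations above a given element, but here we must compare $x$ with elements strictly below it. The trick is that fullness of $T$ supplies a companion $x' \in T$ at level $\Level(x)$, and then up-regularity can be invoked with $y$ and $z$ (rather than $x$) as its ``base'' elements, reflecting the comparison $x' > y, z$ down to $x > y, z$. Once this reflection is in place, the remaining cases are routine.
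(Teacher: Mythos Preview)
Your proof is correct. The paper takes a different route: it derives this lemma as a corollary of two earlier results, namely that up-regularity is equivalent to excluding level-induced copies of $O_{obs1}$ and $O_{obs2}$, and that excluding level-induced $O_{obs2}$ already forces every element to be regular to every full trunk. The proof of that earlier lemma is by contradiction: assuming $x$ is not regular to $T$, one finds $y,z\in T$ at the same $P$-level with $x\not\sim y$, $x\sim z$, picks the companion $x'\in T\cap\Level(x)$ via fullness, and checks that $\{x,x',y,z\}$ induces $O_{obs2}$.

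Your argument is more direct: you split on the comparison between $\Level(x)$ and $\ell=\Level_P(y)=\Level_P(z)$ and invoke up-regularity straight from its definition, with the clever twist in the case $\Level(x)>\ell$ of applying up-regularity to $y$ and $z$ (using $x$ and $x'$ as the same-level witnesses above them) rather than to $x$. Both approaches hinge on the same structural ingredient---the companion $x'\in T$ at $\Level(x)$ supplied by (trunk:ii')---but you use it to transport the relation $x'>y$ to $x>y$, while the paper uses it to exhibit a forbidden configuration. Your route is self-contained and avoids the obstruction machinery; the paper's route has the benefit of pinpointing exactly which forbidden pattern (level-induced $O_{obs2}$) is responsible, which slots into its wider classification of order classes.
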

%\begin{demo}
%Let \begin{math}P\end{math} be a well-founded order that is up-regular,
%and \begin{math}T\end{math} be a full trunk of \begin{math}P\end{math}.
%Let us now assume for a contradiction that \begin{math}P \setminus T\end{math}
%contains an element \begin{math}x\end{math} that is not (down-)regular to \begin{math}T\end{math}.
%Let \begin{math}y,z \in T, y \neq z\end{math} be such that \begin{math}\Level(x) > \Level(y) = \Level(z)\end{math}
%and \begin{math}\OrderFunction(P)(x,y) \neq \OrderFunction(P)(x,z)\end{math}.
%Without loss of generality, \begin{math}x > y \text{ and } x \sim z\end{math}.
%Let \begin{math}x' \in (\Level(x) \cap T)\end{math} (this is where we use the fact that the trunk is full).
%Clearly, we have \begin{math}x \sim x', x' > y \text{ and } x' > z\end{math}.
%Hence the induced suborder \begin{math}x,x',y,z\end{math} is isomorphic to the obstruction
%\begin{math}O_{obs2}\end{math}.
%\end{demo}

We found the following classes of orders:
\begin{itemize}
\item general case: (trunks \begin{math}\subset\end{math} (itov) orders \begin{math}\subset\end{math}
      orders without obstruction \begin{math}O_{obs1}\end{math}),
      orders where any element is regular to any full trunk denoted \rft,
      and orders with a relatively maximum full trunk denoted \rmft;
      it is immediate to prove that trunks \begin{math}\subset\end{math} \rft,
      and trunks \begin{math}\subset\end{math} \rmft;
      we gave counter-examples to show that none of the other inclusion holds,
      except for (itov) orders \begin{math}\not\subseteq\end{math} \rft;
      we shall give the missing counter-example now:
        consider \zz~ together with a copy of
        \begin{math}\nn c = \{0c, 1c, 2c, \dots\}\end{math}
        and an additional element \{-0.5\}
        such that \begin{math}ic < j \Leftrightarrow i < j\end{math}
        except for \begin{math}-0.5 \sim 0c\end{math},
        clearly, it has (itov) property, excluding \{-0.5\} yields a full trunk,
        and -0.5 is not regular to it.
\item well-founded orders: (trunks \begin{math}\subset\end{math} (itov) orders \begin{math}\subset\end{math}
      up-regular orders \begin{math}\subset\end{math} orders without obstruction \begin{math}O_{obs1}\end{math}),
      trunks \begin{math}\subset\end{math} \rmft, up-regular orders \begin{math}\subset\end{math} \rft;
      all the (non-)inclusion holds also for well-orders since we did not use any infinite antichain in our counter-examples.
\item finite orders: (trunks \begin{math}\subset\end{math} (itov) orders \begin{math}\subset\end{math}
      up-regular orders \begin{math}\subset\end{math} orders without obstruction \begin{math}O_{obs1}\end{math}),
      up-regular orders \begin{math}\subset\end{math} \rmft, up-regular orders \begin{math}\subset\end{math} \rft.
\end{itemize}

The following figures are faithful, except maybe for the relations between intersections
of some of the considered classes of orders and some other classes.
\begin{figure}[h!]
  \centering
  \begin{tikzpicture}[scale=1.5]
    %\draw (0,0) node {origin};
    %\draw (1,1) node {1,1};
    %\draw (0,1) node {0,1};
    %\draw (1,0) node {1,0};

    %\draw (0cm,.5cm) ellipse (.25cm and .5cm);
    %est equivalent a
    %\draw (0,.5) ellipse (.25 and .5);

    %\draw (1,1) ellipse (.25 and .5);
    %\draw[shift={(1,1)},rotate=45] (0,0) ellipse (.25 and .5);
    %\draw[rotate=90] (1,1) ellipse (.25 and .5);

    \draw (0,.5) ellipse (.3 and .5);
    \draw (0,.5) node {trunks};

    \draw (0,.8) ellipse (.4 and .8);
    \draw (0,1.2) node {(itov)};

    \draw (0,1.4) ellipse (.5 and 1.4);
    \draw (0,2.5) node {\begin{math}O_{obs1}\end{math} excluded};

    \draw[shift={(0.5,1.235)},rotate=-30] ellipse (.6 and 1.4);
    \draw (0.9,1.7) node {\rft};

    \draw[shift={(-0.5,1.235)},rotate=30] ellipse (.6 and 1.4);
    \draw (-0.9,1.7) node {\rmft};
  \end{tikzpicture}
  \mycaption{Inclusion of some classes of orders}
\end{figure}

\begin{figure}[h!]
  \centering
  \begin{tikzpicture}[scale=1.5]
    \draw (0,.5) ellipse (.3 and .5);
    \draw (0,.5) node {trunks};

    \draw (0,.8) ellipse (.4 and .8);
    \draw (0,1.2) node {(itov)};

    \draw (0,1.4) ellipse (.5 and 1.4);
    \draw (0,2.5) node {up-regular};

    \draw (0,2.4) ellipse (.7 and 2.4);
    \draw (0,4) node {\begin{math}O_{obs1}\end{math} excluded};

    \draw[shift={(0.8,2)},rotate=-30] ellipse (1.4 and 2.4);
    \draw (1.2,1.7) node {\rft};

    \draw[shift={(-0.95,2)},rotate=30] ellipse (.6 and 2.4);
    \draw (-1.2,1.7) node {\rmft};
  \end{tikzpicture}
  \mycaption{Inclusion of some classes of well-founded orders}
\end{figure}

\begin{figure}[h!]
  \centering
  \begin{tikzpicture}[scale=1.5]
    \draw (0,.5) ellipse (.3 and .5);
    \draw (0,.5) node {trunks};

    \draw (0,.8) ellipse (.4 and .8);
    \draw (0,1.2) node {(itov)};

    \draw (0,1.4) ellipse (.5 and 1.4);
    \draw (0,2.5) node {up-regular};

    \draw (0,2.4) ellipse (.7 and 2.4);
    \draw (0,4) node {\begin{math}O_{obs1}\end{math} excluded};

    \draw[shift={(0.8,2)},rotate=-30] ellipse (1.4 and 2.4);
    \draw (1.2,1.7) node {\rft};

    \draw[shift={(-0.8,2)},rotate=30] ellipse (1.4 and 2.4);
    \draw (-1.2,1.7) node {\rmft};
  \end{tikzpicture}
  \mycaption{Inclusion of some classes of finite orders}
\end{figure}

\newpage

\begin{theorem}[decomposition of finite (itov) orders]
\label{theorem:finite_itov_decomposition}
A finite order \begin{math}P\end{math} has (itov) property if and only if
\begin{itemize}
  \item it has a relatively maximum full trunk \begin{math}\RMFTrunk(P)\end{math},
  \item \begin{math}P \setminus \RMFTrunk(P)\end{math} has (itov) property,
  \item each element of \begin{math}P \setminus \RMFTrunk(P)\end{math} is regular to \begin{math}\RMFTrunk(P)\end{math},
  \item and there is no obstruction \begin{math}O_{obs1}\end{math} or \begin{math}O_{obs2}\end{math},
        intersecting both \begin{math}\RMFTrunk(P)\end{math} and \begin{math}P \setminus \RMFTrunk(P)\end{math},
        with at least two elements in \begin{math}P \setminus \RMFTrunk(P)\end{math}.
\end{itemize}
\end{theorem}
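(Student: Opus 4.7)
The plan is to prove both directions of the equivalence; the forward implication is essentially a compilation of earlier results, while the backward implication proceeds by a case analysis on how a hypothetical forbidden induced suborder intersects $\RMFTrunk(P)$.

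For the forward direction, assuming $P$ has (itov) property: condition (1) is Proposition~\ref{proposition:exists_rmf_trunk}; condition (2) follows because (itov), being defined by excluded induced suborders, is hereditary, so the induced suborder $P \setminus \RMFTrunk(P)$ is again (itov); condition (3) is the lemma that in a well-founded (itov) order every element is regular to every full trunk, applied to $\RMFTrunk(P)$; condition (4) is immediate since $P$ contains no induced copy of $O_{obs1}$ or $O_{obs2}$ at all.

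For the backward direction, assume (1)--(4) hold and suppose for a contradiction that $P$ contains an induced suborder $X = \{a,b,c,d\}$ isomorphic to $O_{obs1}$ or $O_{obs2}$. I split on $k := |X \setminus \RMFTrunk(P)|$. If $k = 4$ then $X \subseteq P \setminus \RMFTrunk(P)$, contradicting (2). If $k \in \{2,3\}$ then $X$ meets both $\RMFTrunk(P)$ and its complement with at least two elements outside the trunk, contradicting (4). If $k = 0$ then $X \subseteq \RMFTrunk(P)$, but a short check shows that every $3$-subset of $O_{obs1}$ is isomorphic to the trunk obstruction $O_{obst}$, and that the $3$-subsets $\{a,b,d\}$ and $\{a,c,d\}$ of $O_{obs2}$ are also isomorphic to $O_{obst}$, so neither obstruction can sit inside a trunk.

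The remaining and most delicate case is $k = 1$. For $O_{obs1}$ it is resolved at once by the observation just made, since any three elements left in the trunk already form a copy of $O_{obst}$. For $O_{obs2}$, the outside element must be $a$ or $d$ (the other two choices leave an $O_{obst}$ in the trunk); in either sub-case the three in-trunk elements contain a pair at the same level (namely $\{b,d\}$ if $a$ is outside, and $\{a,c\}$ if $d$ is outside) that the outside element compares differently with, contradicting regularity condition (3). The main obstacle is precisely this matching step: one has to align the incomparability pattern of $X$ with the level structure of the trunk, using that incomparability inside a trunk coincides with equality of level, and then invoke (3) on the correctly identified pair; once the pair is pinned down, the contradiction is immediate.
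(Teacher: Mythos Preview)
Your proof is correct and follows essentially the same route as the paper: the forward direction by citing the earlier proposition and lemma together with hereditarity, and the backward direction by locating a forbidden $4$-element induced suborder and casing on how many of its elements lie outside $\RMFTrunk(P)$. The only stylistic difference is in the $k=1$ analysis for $O_{obs2}$: you first eliminate $x\in\{b,c\}$ via the $O_{obst}$-in-trunk argument and then invoke regularity for $x\in\{a,d\}$, whereas the paper argues uniformly that the two top (resp.\ bottom) elements of the obstruction fall in a single trunk level and that at least one element on the opposite level of the obstruction fails to be regular to that trunk level---contradicting either condition~(3) if that element is the outsider, or the trunk structure itself if it lies in $\RMFTrunk(P)$.
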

\begin{demo}
The forward implication is a consequence of the previous results.

Clearly the backward implication is true if \begin{math}P\end{math} is an antichain.
Thus we can suppose that \begin{math}P\end{math} and \begin{math}\RMFTrunk(P)\end{math}
contain at least one comparability.

Assume for a contradiction that \begin{math}P\end{math} is an order without property (itov),
but it has a relatively maximum full trunk \begin{math}\RMFTrunk(P)\end{math},
\begin{math}P \setminus \RMFTrunk(P)\end{math} has property (itov),
and each element of \begin{math}P \setminus \RMFTrunk(P)\end{math} is regular to \begin{math}\RMFTrunk(P)\end{math}.
Clearly, \begin{math}P\end{math} contains an obstruction \begin{math}O_{obs}\end{math}
that is neither contained in \begin{math}\RMFTrunk(P)\end{math} nor contained in \begin{math}P \setminus \RMFTrunk(P)\end{math}.

Assume that \begin{math}O_{obs} \cap \RMFTrunk(P)\end{math} has size 3.
Let \begin{math}\{x\} = (O_{obs} \setminus \RMFTrunk(P))\end{math}.
We must have that both top elements (if \begin{math}x\end{math} is a bottom element)
or both bottom elements (if \begin{math}x\end{math} is a top element) of
\begin{math}O_{obs}\end{math} are in the same level of \begin{math}\RMFTrunk(P)\end{math},
since they are incomparable.
But then \begin{math}x\end{math} and the third element in \begin{math}O_{obs} \cap \RMFTrunk(P)\end{math}
must be regular to the two top/bottom elements,
since any element in \begin{math}P\end{math} is supposed to be regular to \begin{math}\RMFTrunk(P)\end{math}.
It is easy to see that in both \begin{math}O_{obs1}\end{math} and \begin{math}O_{obs2}\end{math},
one element at least on each level is not regular to the other level, a contradiction.
%
%The case \begin{math}O_{obs} \cap \RMFTrunk(P)\end{math} has size 2 or 1 is in contradiction with the last requirement of the theorem.
\end{demo}

It is tempting to hope to obtain a similar decomposition theorem for up-regular orders.
However up-regular is not hereditary, it is not even ``relatively maximum full trunk hereditary''.
In order to see this fact, we need to adapt the counter-example showing that up-regular is not hereditary.
The following order is up-regular:
\begin{math}(\{b_1, b_1', b_2, t_1, t_1', t_2, m_1', t_2', u_1'\},\newline
      \{b_1 < t_1, b_1 < t_1', b_1 < t_2, b_1 < t_2', b_1 < u_1',\newline
        b_1' < m_1', b_1' < t_1, b_1' < t_1', b_1' < t_2, b_1' < t_2', b_1' < u_1',\newline
        m_1' < t_1, m_1' < t_1', m_1' < t_2, m_1' < t_2', m_1' < u_1',\newline
        t_1' < t_2, t_1' < t_2', t_1' < u_1',\newline
        b_2 < t_2, b_2 < t_2', b_2 < u_1',\newline
        t_2' < u_1'\})\end{math};
It is clear that \begin{math}(b_1', m_1', t_1', t_2', u_1')\end{math} is the only maximum chain of this order,
hence it is its relatively maximum full trunk;
but removing it yields\begin{math}\{b_1, b_2, t_1,t_2\}\end{math} which is not up-regular.

%\begin{falsetheorem}[decomposition of finite up-regular orders]
%\label{falsetheorem:finite_upregular_decomposition}
%A finite order \begin{math}P\end{math} is up-regular if and only if
%\begin{itemize}
%  \item it has a relatively maximum full trunk \begin{math}\RMFTrunk(P)\end{math},
%  \item \begin{math}P \setminus \RMFTrunk(P)\end{math} is up-regular,
%  \item each element of \begin{math}P \setminus \RMFTrunk(P)\end{math} is regular to \begin{math}\RMFTrunk(P)\end{math},
%  \item each element of \begin{math}\RMFTrunk(P)\end{math} is up-regular in \begin{math}P\end{math} to \begin{math}P \setminus \RMFTrunk(P)\end{math}.
%\end{itemize}
%\end{falsetheorem}

Unfortunately, we did not found more regularity result:
\begin{math}P \setminus \RMFTrunk(P)\end{math} doesn't need to be (down-)regular;
elements in \begin{math}\RMFTrunk(P)\end{math} don't need to be (down-)regular
 in \begin{math}P\end{math} relatively to \begin{math}P \setminus \RMFTrunk(P)\end{math};
elements in \begin{math}\RMFTrunk(P)\end{math} don't need to be (down-)regular
 in \begin{math}P\end{math} relatively to \begin{math}\RMFTrunk(P \setminus \RMFTrunk(P))\end{math}.
Finding the appropriate examples is a simple exercise for the reader.

Still, there is something we can improve.
With Theorem~\ref{theorem:finite_itov_decomposition},
we may expect a \begin{math}O(n)\end{math} number of steps of recursive decomposition.
We will now show that it is not the case, and that at most \begin{math}\lceil\lg(n)\rceil\end{math} steps are sufficient.

\begin{lemma}
Consider a finite up-regular order \begin{math}P\end{math}
and its relatively maximum full trunk \begin{math}\RMFTrunk(P)\end{math}.
\begin{math}\Height(P \setminus \RMFTrunk(P)) \leq \frac{1}{2} \times \Height(P)\end{math}.
\end{lemma}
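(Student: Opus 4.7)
The plan is to choose a maximum chain $x_1 < x_2 < \cdots < x_m$ of $P \setminus \RMFTrunk(P)$ (it is automatically a chain in $P$) with levels $\ell_i = \Level(x_i)$ in $P$ satisfying $\ell_1 < \ell_2 < \cdots < \ell_m$, and prove both $\ell_{i+1} \geq \ell_i + 2$ for every $i < m$ and $\ell_m \leq h - 2$, where $h = \Height(P)$. Combining these gives $2(m-1) \leq \ell_m - \ell_1 \leq h - 2$, hence $m \leq h/2$, which is exactly the claim.

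First, to each $x_i$ I will attach a \emph{blocking level} $b_i \in (\ell_i, h-1]$ at which every element of $P$ is incomparable with $x_i$. Since $x_i$ can always be extended downward to a chain reaching level $0$ (by the very definition of level in a well-founded order), the fact that $x_i \notin \RMFTrunk(P)$, i.e.\ that $x_i$ lies on no maximum chain of $P$, must come from an upward obstruction. Up-regularity forces each level $\ell' > \ell_i$ to be uniform with respect to $x_i$: either all its $P$-elements lie above $x_i$, or all are incomparable with $x_i$. If every $\ell' \in (\ell_i, h-1]$ were of the first type, then picking at each such level the element of $\RMFTrunk(P)$ (which exists at every level, $\RMFTrunk(P)$ being a full trunk) and using the trunk structure to chain these together, concatenated with the downward extension of $x_i$, would produce a maximum chain through $x_i$, contradicting $x_i \notin \RMFTrunk(P)$. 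Hence such a $b_i$ exists, and in particular $\ell_i \leq h - 2$.

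Second, I will prove $\ell_{i+1} > b_i$ for every $i < m$, which together with $b_i \geq \ell_i + 1$ yields $\ell_{i+1} \geq \ell_i + 2$. Since $x_{i+1} > x_i$ at level $\ell_{i+1}$, up-regularity applied to $x_i$ forces every $P$-element at level $\ell_{i+1}$ to be above $x_i$; pick $y \in \RMFTrunk(P)$ at level $\ell_{i+1}$ (so $y > x_i$) and $z \in \RMFTrunk(P)$ at level $b_i$ (so $z \sim x_i$). If $\ell_{i+1} = b_i$, then $y$ and $z$ share a level but have incompatible relations with $x_i$, directly contradicting up-regularity; if $\ell_{i+1} < b_i$, the trunk structure of $\RMFTrunk(P)$ gives $y < z$, so $x_i < y < z$ and transitivity yields $x_i < z$, contradicting $z \sim x_i$. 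The main subtlety is precisely this step: it is the combination of up-regularity of $P$ with the trunk structure of $\RMFTrunk(P)$ that partitions the levels above $\ell_i$ into a contiguous incomparable block sitting strictly below a contiguous above block, and that is what produces the gap $\ell_{i+1} - \ell_i \geq 2$ and ultimately the halving $m \leq h/2$.
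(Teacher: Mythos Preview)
Your proof is correct and follows essentially the same mechanism as the paper's: both hinge on the observation that if an element $x$ of $P\setminus\RMFTrunk(P)$ has something above it at the very next level of $P$, then up-regularity connects $x$ to an element of $\RMFTrunk(P)$ at that next level, and the trunk structure carries this all the way to the top, placing $x$ on a maximum chain --- a contradiction. The paper packages this as a pigeonhole contradiction (if $m>h/2$ then either the top level or two consecutive levels are hit), whereas you package it directly via your ``blocking levels'' to obtain $\ell_{i+1}-\ell_i\geq 2$ and $\ell_m\leq h-2$; these are contrapositives of one another and the underlying argument is identical.
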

\begin{demo}
Consider \begin{math}C_{rmft}\end{math} a maximum chain of \begin{math}P\end{math},
the cardinal \begin{math}|C_{rmft}|\end{math} of \begin{math}C_{rmft}\end{math}
equals \begin{math}\Height(P)\end{math}.
Consider \begin{math}C_{out}\end{math} a maximum chain of \begin{math}P \setminus \RMFTrunk(P)\end{math},
\begin{math}|C_{out}| = \Height(P \setminus \RMFTrunk(P))\end{math}.
Assume for a contradiction that \begin{math}|C_{out}| > \frac{1}{2} \times |C_{rmft}|\end{math}.
Clearly,
\begin{itemize}
\item either \begin{math}C_{out}\end{math} intersects the highest level of \begin{math}P\end{math},
      but then the element in this intersection belongs to a maximum chain,
      and thus should be in \begin{math}\RMFTrunk(P)\end{math},
      which is impossible since \begin{math}\RMFTrunk(P)\end{math} and \begin{math}C_{out}\end{math} are disjoint;
\item or there are two consecutive levels of \begin{math}P\end{math} containing elements of \begin{math}C_{out}\end{math},
      and these two levels are not the highest level of \begin{math}P\end{math}.
      Let \begin{math}x < y \in C_{out}\end{math} be the corresponding elements.
      Let \begin{math}\{x'\} = (C_{rmft}\cap \Level(x))\end{math} and \begin{math}\{y'\} = (C_{rmft}\cap \Level(y))\end{math}.
      Clearly, \begin{math}x,y,x',y'\end{math} are all distinct, \begin{math}x < y, x' < y', x \sim x', y \sim y'\end{math}.
      Since, \begin{math}P\end{math} is up-regular, we must also have \begin{math}x < y', x' < y\end{math}.
      But then it is trivial to see that \begin{math}x\end{math} belongs to a maximum chain of \begin{math}P\end{math}, the desired contradiction.
\end{itemize}
\end{demo}

This lemma is optimal as the following family \begin{math}(P^{mrh}_i)_{i \in \nn}\end{math}
of (itov) orders shows (mrh stands for maximum recursive height):
\begin{itemize}
\item \begin{math}P^{mrh}_0 = (\{rmft_{0,1},rmft_{0,2},rmft_{0,3},rmft_{0,4},x_{0,1},x_{0,3}\},\newline
      \{rmft_{0,i} < rmft_{0,j} \text{ when } i < j, x_{0,1} < x_{0,3}, x_{0,1} < rmft_{0,3},
      x_{0,1} < rmft_{0,4}, rmft_{0,1} < x_{0,3}, rmft_{0,2} < x_{0,3}\})\end{math},
\item \begin{math}P^{mrh}_1 = P^{mrh}_0 \cup (\{rmft_{1,3},rmft_{1,4},x_{1,3}\},\newline
      \{rmft_{0,p} < rmft_{1,q}, rmft_{1,3} < rmft_{1,4},
      x_{0,p} < x_{1,3}, x_{0,3} < rmft_{1,3},
      rmft_{0,4} < x_{1,3}\})\end{math}, and the transitive closure,
      hence we have \begin{math}\{rmft_{0,p} < x_{1,3}, x_{0,p} < rmft_{1,q}\})\end{math},
\item and so on: \begin{math}P^{mrh}_{i+1} = P^{mrh}_{i} \cup (\{rmft_{i+1,3},rmft_{i+1,4},x_{i+1,3}\},\newline
      \{rmft_{i+1,3} < rmft_{i+1,4}, rmft_{j,p} < rmft_{i+1,q} \text{ when } j < i+1,
      x_{j,p} < x_{i+1,3} \text{ when } j < i+1, x_{i,3} < rmft_{i+1,3},
      rmft_{i,4} < x_{i+1,3}\})\end{math}, and the transitive closure,
      hence we have \begin{math}\{rmft_{j,p} < x_{i+1,3} \text{ when } j < i+1, x_{j,p} < rmft_{i+1,3} \text{ when } j < i+1,
      x_{j,p} < rmft_{i+1,4} \text{ when } j < i\})\end{math}.
\end{itemize}
The rmft elements form the relatively maximum full trunk of these orders, it is a chain lengthened at each step.
The x elements form the lengthiest chain that does not intersect the relatively maximum full trunk.
We expect the reader to draw the first three \begin{math}P^{mrh}_i\end{math}.
For \begin{math}P^{mrh}_0\end{math}, the second subscript indicates the level of each element.

After this structural study, it seems reasonable to choose the class of orders that will be at the same time larger and with better algorithmic results.
It is clear that all results hereafter (class membership testing, order isomorphism, couting the number of linear extensions)
may be generalized to orders for which each connected component (\begin{math}P\end{math} is seen as a directed graph)
belongs to the class studied.
Indeed, one can compute in \begin{math}O(n^2)\end{math} the connected components of \begin{math}P\end{math};
order isomorphism can be solved by pairing the isomorphic connected components;
counting the number of linear extensions \begin{math}|\LinearExtensions(P)|\end{math} of an order \begin{math}P\end{math},
with \begin{math}k\end{math} connected components \begin{math}CC_i\end{math} with \begin{math}n_i, 1 \leq i \leq k,\end{math} elements,
can be done with the following formula
\begin{displaymath}|\LinearExtensions(P)| = (\prod_{i=1}^{i=k} |\LinearExtensions(CC_i)|) \times
                   (\prod_{i=2}^{i=k} \Fusion(\sum_{j=1}^{j=i-1} n_j, n_i)),\end{displaymath}
where \begin{math}\Fusion(p,q)\end{math} counts the number of ways to obtain a linear extension
 of an order made of a chain of size \begin{math}p\end{math} and a chain of size \begin{math}q\end{math}.
\begin{math}\Fusion(p,q) = \Fusion(q,p)\end{math} can easily be computed recursively:
\begin{itemize}
\item \begin{math}\Fusion(p,0) = 1\end{math},
\item \begin{math}\Fusion(p,1) = p+1\end{math},
\item \begin{math}\Fusion(p,q) = \sum_{i=0}^{i=p} (\Fusion(p-i,q-2) \times \Fusion(i,1)) = \sum_{i=0}^{i=p} (\Fusion(p-i,q-2) \times (i+1))\end{math}.
\end{itemize}

The simplest superclass of (itov) orders that we encoutered are the up-regular orders.
It is easy to see that testing membership in this class can be done in \begin{math}O(n^3)\end{math}.
Indeed it is the complexity of computing the level decomposition of an order.
Given a level decomposition, testing if an element \begin{math}x\end{math} is up-regular can be done in \begin{math}O(n^2)\end{math},
since we only need to compute an array of at most \begin{math}n\end{math} integers:
in the ith cell, there is a counter for the number of elements of the ith level that are more than \begin{math}x\end{math};
in \begin{math}O(n^2)\end{math} we compute this array and then in \begin{math}O(n \times \log(n))\end{math},
we can check that the ith cell contains either 0 or the number of elements of the ith level, for i more than the level of \begin{math}x\end{math}.
Hence, testing that the order is up-regular can be done in time \begin{math}O(n^3)\end{math}.

Testing order isomorphism between two up-regular orders is also easy.
Attach to each element \begin{math}x\end{math} a label \begin{math}\Label(x)\end{math} equal to the smallest integer
such that \begin{math}x\end{math} is less than elements of the level corresponding to this integer.
If no element is more than \begin{math}x\end{math}, we can give the label \begin{math}\Height(P)\end{math}
corresponding to a virtual level above all elements.
Clearly in an up-regular order this integer is sufficient since all elements in levels above
must be more than \begin{math}x\end{math} by transitivity.
The size of \begin{math}\Label(x)\end{math} is \begin{math}O(\log(n))\end{math},
and it can be computed in time \begin{math}O(n)\end{math}.
Thus computing the labels can be done in time \begin{math}O(n^2)\end{math}, given a level decomposition.
Two up-regular orders are isomorphic if the number of elements in each level of their level decomposition with the same label are the same.
Clearly, for two elements in the same level with the same label, there is an automorphism that exchanges these two elements.
Ordering the labels in each level, can be done in time
\begin{math}O(n \times \log(n)^2 \times \log(\log(n)))\end{math},
then comparing the ordered lists can be done in time \begin{math}O(n \times \log(n))\end{math}.
Thus testing isomorphism has complexity \begin{math}O(n^3)\end{math} given by the cost of level decomposition.

We will try to compute the number of linear extensions of (itov) orders,
we start with a more simple class of (itov) orders but more complex than trunks.
A \emph{cedar} is an (itov) order \begin{math}P\end{math} with a
relatively maximum full trunk \begin{math}\RMFTrunk(P)\end{math},
such that \begin{math}P \setminus \RMFTrunk(P)\end{math} has no comparability relationship (hence is a trunk),
and no element of \begin{math}P \setminus \RMFTrunk(P)\end{math} is less than an element of \begin{math}\RMFTrunk(P)\end{math}.
For example, take a chain of height 7 (7 elements) for \begin{math}\RMFTrunk(P)\end{math},
add 5 elements on level 3 (levels start at 0, add three elements to the right of \begin{math}\RMFTrunk(P)\end{math},
and add two to the left), add 4 elements on level 4, add one element on level 5,
draw the arcs without drawing those obtained by transitivity,
and draw a smooth curve around each level that has more than one element.
It should look like a cedar.

Clearly computing the number of linear extensions of \begin{math}\RMFTrunk(P)\end{math}
and of \begin{math}P \setminus \RMFTrunk(P)\end{math} is not enough to merge these linear extensions.
To each element of \begin{math}P \setminus \RMFTrunk(P)\end{math} we can associate a label corresponding to its level
(or equivalently to the highest level of the trunk that is less than it).
Then we can define a profile of any linear extension of \begin{math}P \setminus \RMFTrunk(P)\end{math}
as the ordered sequence of labels corresponding to elements of the linear extension from bottom to top.
There can be at most \begin{math}\Height(\RMFTrunk(P)) - 1\end{math} distinct labels,
and thus there can be an exponential number like \begin{math}(\Height(\RMFTrunk(P)) - 1)!\end{math}
of distinct profiles for linear extensions of \begin{math}P \setminus \RMFTrunk(P)\end{math}.
The approach of splitting an order between its relatively maximum full trunk and the rest of it does not work.

There is an approach that works for cedars.
Indeed consider the elements of a cedar by increasing level, i.e.
consider a linear extension of the cedar where all elements of any level
are more than elements of inferior levels and less than elements of superior levels.
Hereafter, this linear extension will be denoted by \begin{math}le(P)\end{math}.
Given a level decomposition of a cedar, computing some \begin{math}le(P)\end{math} can be done in linear time.
We can compute the number of linear extensions of the suborder of the cedar 
induced by the first \begin{math}i\end{math} elements according to \begin{math}le\end{math}.
Let us denote \begin{math}P_i\end{math} this suborder.
Let us denote \begin{math}\LinearExtensions(P, i, l_1, j_1, l_2, j_2)\end{math}
the set of linear extensions of \begin{math}P_i\end{math}
such that for each linear extension \begin{math}lea\end{math} in \begin{math}\LinearExtensions(P, i, l_1, j_1, l_2, j_2)\end{math}:
\begin{itemize}
 \item \begin{math}j_1 = \max(\{\Level_{lea}(x) ; x \in (\Level^{-1}_P(l) \cap P_i \cap \RMFTrunk(P)), 0 \leq l \leq l_1\} \cup \{-1\})\end{math},
 \item \begin{math}j_2 = \max(\{\Level_{lea}(x) ; x \in (\Level^{-1}_P(l) \cap P_i \cap \RMFTrunk(P)), 0 \leq l \leq l_2\})\end{math}.
\end{itemize}
Since \begin{math}\LinearExtensions(P, i, l_1, j_1, l_2, j_2) = \emptyset\end{math},
unless \begin{math}1 \leq i \leq n, -1 \leq l_1, l_2, j_1, j_2 < i\end{math},
we only have to consider a polynomial number of such sets.
Since we consider a cedar, we will only use the case \begin{math}l_2 = l_1 + 1\end{math}.
Clearly \begin{math}\LinearExtensions(P, 1, -1, -1, 0, 0) = \LinearExtensions(P, 1)\end{math}.
(\begin{math}|\LinearExtensions(P, 1, -1, 0, 0, 0)| = 0\end{math}.)
Assume that we have computed \begin{math}|\LinearExtensions(P, i, \Level(x_i) - 1, j_1, \Level(x_i), j_2)|\end{math},
for all \begin{math}-1 \leq j_1 < i, 0 \leq j_2 < i\end{math} with \begin{math}j_1 \leq j_2\end{math},
all other values for \begin{math}j_1, j_2\end{math} may be considered to be 0.
We have four cases to consider:
\begin{itemize}
\item \begin{math}x_{i+1}\end{math} does not belong to \begin{math}\RMFTrunk(P)\end{math},
  and \begin{math}\Level(x_{i+1}) = \Level(x_i)\end{math}.
  It is clear that for any linear extension in \begin{math}\LinearExtensions(P, i, \Level(x_i) - 1, j_1, \Level(x_i), j_2)\end{math},
  we obtain a linear extension in:
  \begin{itemize}
    \item \begin{math}\LinearExtensions(P, i + 1, \Level(x_{i+1}) - 1, j_1, \Level(x_{i+1}), j_2)\end{math},
          if we add \begin{math}x_{i+1}\end{math} higher than position \begin{math}j_2\end{math},
          there is \begin{math}i - j_2\end{math} such choices,
    \item \begin{math}\LinearExtensions(P, i + 1, \Level(x_{i+1}) - 1, j_1, \Level(x_{i+1}), j_2 + 1)\end{math},
          if we add \begin{math}x_{i+1}\end{math} higher than position \begin{math}j_1\end{math} and lower than position \begin{math}j_2\end{math},
          there is \begin{math}j_2 - j_1\end{math} such choices.
  \end{itemize}
  Note that we cannot add \begin{math}x_{i+1}\end{math} lower than position \begin{math}j_1\end{math}.
  Thus \begin{math}|\LinearExtensions(P, i + 1, \Level(x_{i+1}) - 1, j_1, \Level(x_{i+1}), j_2)| = \newline
  (i - j_2) \times |\LinearExtensions(P, i, \Level(x_i) - 1, j_1, \Level(x_i), j_2)|
  + (j_2 - j_1 - 1) \times |\LinearExtensions(P, i, \Level(x_i) - 1, j_1, \Level(x_i), j_2 - 1)|
  \end{math} 
  for all \begin{math}-1 \leq j_1 < i + 1, 0 \leq j_2 < i + 1\end{math} with \begin{math}j_1 \leq j_2\end{math}.
\item \begin{math}x_{i+1}\end{math} does not belong to \begin{math}\RMFTrunk(P)\end{math},
  and \begin{math}\Level(x_{i+1}) = \Level(x_i) + 1\end{math}.
  It is clear that for any linear extension in \begin{math}\LinearExtensions(P, i, \Level(x_i) - 1, j_1, \Level(x_i), j_2)\end{math},
  we obtain a linear extension in:
  \begin{itemize}
    \item \begin{math}\LinearExtensions(P, i + 1, \Level(x_{i+1}) - 1, j_2, \Level(x_{i+1}), j_2)\end{math},
          if we add \begin{math}x_{i+1}\end{math} higher than position \begin{math}j_2\end{math},
          there is \begin{math}i - j_2\end{math} such choices.
  \end{itemize}
  Note that we cannot add \begin{math}x_{i+1}\end{math} lower than position \begin{math}j_2\end{math}.
  It is clear that 
  \begin{math}|\LinearExtensions(P, i + 1, \Level(x_{i+1}) - 1, j_1, \Level(x_{i+1}), j_2)| = 0\end{math} 
  if \begin{math}j_1 \neq j_2\end{math}.
  Similarly \begin{math}|\LinearExtensions(P, i + 1, \Level(x_{i+1}) - 1, j_2, \Level(x_{i+1}), j_2)| = \newline
  \sum_{j_1 = -1}^{j_1 = j_2} ((i - j_2) \times |\LinearExtensions(P, i, \Level(x_i) - 1, j_1, \Level(x_i), j_2)|)
  \end{math}
  for all \begin{math}0 \leq j_2 < i + 1\end{math}.
\item \begin{math}x_{i+1}\end{math} belongs to \begin{math}\RMFTrunk(P)\end{math},
  and \begin{math}\Level(x_{i+1}) = \Level(x_i)\end{math}.
  It is clear that for any linear extension in \begin{math}\LinearExtensions(P, i, \Level(x_i) - 1, j_1, \Level(x_i), j_2)\end{math},
  we obtain a linear extension in:
  \begin{itemize}
    \item \begin{math}\LinearExtensions(P, i + 1, \Level(x_{i+1}) - 1, j_1, \Level(x_{i+1}), j_3)\end{math},
          with \begin{math}j_3 > j_2 + 1\end{math},
          if we add \begin{math}x_{i+1}\end{math} at position \begin{math}j_3\end{math},
          there is only one such choice for \begin{math}j_3\end{math} given (assuming \begin{math}j_2 < i - 1\end{math}),
    \item \begin{math}\LinearExtensions(P, i + 1, \Level(x_{i+1}) - 1, j_1, \Level(x_{i+1}), j_2 + 1)\end{math},
          if we add \begin{math}x_{i+1}\end{math} higher than position \begin{math}j_1\end{math} and lower than position \begin{math}j_2\end{math},
          or at position \begin{math}j_2 + 1\end{math},
          there is \begin{math}j_2 - j_1 + 1\end{math} such choices.
  \end{itemize}
  Note that we cannot add \begin{math}x_{i+1}\end{math} lower than position \begin{math}j_1\end{math}.
  Thus \begin{math}|\LinearExtensions(P, i + 1, \Level(x_{i+1}) - 1, j_1, \Level(x_{i+1}), j_2)| = \newline
  (\sum_{j_2' = 0}^{j_2' = j_2 - 2} |\LinearExtensions(P, i, \Level(x_i) - 1, j_1, \Level(x_i), j_2')|)
  + (j_2 - j_1) \times |\LinearExtensions(P, i, \Level(x_i) - 1, j_1, \Level(x_i), j_2 - 1)|
  \end{math} 
  for all \begin{math}-1 \leq j_1 < i + 1, 0 \leq j_2 < i + 1\end{math} with \begin{math}j_1 \leq j_2\end{math}.
\item \begin{math}x_{i+1}\end{math} belongs to \begin{math}\RMFTrunk(P)\end{math},
  and \begin{math}\Level(x_{i+1}) = \Level(x_i) + 1\end{math}.
  It is clear that for any linear extension in \begin{math}\LinearExtensions(P, i, \Level(x_i) - 1, j_1, \Level(x_i), j_2)\end{math},
  we obtain a linear extension in:
  \begin{itemize}
    \item \begin{math}\LinearExtensions(P, i + 1, \Level(x_{i+1}) - 1, j_2, \Level(x_{i+1}), j_3)\end{math},
          with \begin{math}j_3 \geq j_2 + 1\end{math},
          if we add \begin{math}x_{i+1}\end{math} at position \begin{math}j_3\end{math},
          there is only one such choice for \begin{math}j_3\end{math} given.
  \end{itemize}
  Note that we cannot add \begin{math}x_{i+1}\end{math} lower than position \begin{math}j_2\end{math}.
  It is clear that 
  \begin{math}|\LinearExtensions(P, i + 1, \Level(x_{i+1}) - 1, j_1, \Level(x_{i+1}), j_2)| = \newline
  \sum_{j_1' = -1}^{j_1' = j_1} (|\LinearExtensions(P, i, \Level(x_i) - 1, j_1', \Level(x_i), j_1)|)
  \end{math}
  for all \begin{math}-1 \leq j_1 < i + 1, 0 \leq j_2 < i + 1\end{math} with \begin{math}j_1 < j_2\end{math}
  (it is 0 if \begin{math}j_1 = j_2\end{math}).
\end{itemize}
Clearly, 
\begin{math}
|\LinearExtensions(P_i)| = \sum_{j_1 = -1}^{j_1 = i-1} \sum_{j_2 = j_1}^{j_2 = i-1} (|\LinearExtensions(P, i, \Level(x_i) - 1, j_1, \Level(x_i), j_2)|)
\end{math}, for all \begin{math}i\end{math}.
Thus \begin{math}|\LinearExtensions(P)| = |\LinearExtensions(P_n)|\end{math}
can be computed in polynomial time, for cedars.
(There is at most a linear number of additions and two multiplications in order to compute some 
\begin{math}|\LinearExtensions(P, i + 1, \Level(x_{i+1}) - 1, j_1, \Level(x_{i+1}), j_2)|\end{math}.
There is at most \begin{math}O(n^3)\end{math} such numbers to compute,
so the algorithm runs in \begin{math}O(n^5)\end{math}, which is bad for such a simple class of orders.)
We did not succeed to generalize this approach for all (itov) orders, because it would require to keep track
of positions for a linear number of levels, and thus would yield an exponential time algorithm.

However, there is a very simple way to compute the number of linear extensions of (itov) orders.
It really shows that nice structural results, such as those above, may be completely unefficient.
After we presented version 4 of this article at the ENS Lyon, St\'ephan Thomass\'e made the following remark:
"When you exclude induced suborders isomorphic to \begin{math}O_{obs2}\end{math},
since you are considering orders, you exclude \begin{math}P_4\end{math}.
(itov) orders look like cographs where the disjoint union has all but one subgraphs that are isolated vertices.
Thus counting the number of linear extensions should be trivial."

Indeed, finite orders excluding induced suborders isomorphic to \begin{math}O_{obs2}\end{math}
are the transitive series parallel digraphs of \cite{Lawler1976}
(see also the articles by \cite{DBLP:conf/stoc/ValdesTL79} and \cite{DBLP:journals/dam/CorneilLB81}).
They can be recognized in linear time (see the articles by \cite{DBLP:conf/stoc/ValdesTL79} and \cite{DBLP:journals/dam/CrespelleP06}, for example),
and even obtain the corresponding modular decomposition with the same time complexity
(here linear time means \begin{math}O(n+m)\end{math}, linear in the sum of the number of elements and comparability arcs, not \begin{math}O(n)\end{math}).
Given the modular decomposition using disjoint sum and ``order composition'' nodes,
computing the number of linear extensions is trivial since it is using \begin{math}\Fusion\end{math} defined above
in the first case, and a multiplication in the second case.
It is easy to see that disjoint sum where at least two suborders are not reduced to isolated vertices yields an obstruction \begin{math}O_{obs1}\end{math},
and it is also easy to see that such an obstruction cannot be constructed by order composition
or by disjoint sum where at most one suborder has comparability relationship (has arcs).
Thus, it truly characterizes finite (itov) orders.
Similarly, it is easy to see that modular decomposition where all disjoint sums are applied only to suborders without comparability relationship
(without arcs) characterizes finite trunks.
Instead of using \begin{math}\Fusion\end{math}, we can use a simple multiplication by the cardinal plus one
of the previous suborders in the disjoint sum, when the suborder added is an isolated vertex.
Thus, counting the number of linear extensions requires at most a linear number of additions and multiplications,
its complexity is \begin{math}O((n+m) + n \times n \times \log(n) + n \times \MultiplyCost(n \times \log(n)))
  = O(m + n \times \MultiplyCost(n \times \log(n)))
  = O(n \times \MultiplyCost(n \times \log(n)))\end{math}, since \begin{math}m \in O(n^2)\end{math}.
We note that the classes of series parallel interval/(itov) orders and weak orders/trunks are not listed in the recent article by \cite{DBLP:journals/corr/abs-1907-00801}
on subclasses of directed co-graphs.

\begin{openproblem}
Find the exact complexity of counting the linear extensions of up-regular orders.
\end{openproblem}

\section{(Strict) questionable representations of width 2 for partial orders}
\label{section:questionable_representations_width2_for_partial_orders}

If we do not ask that the questionable representation of width 2 is total,
then the prefix condition between two incomparable elements is no longer valid.
However, it is easy to see that neither \begin{math}O^{0,1}\end{math},
nor \begin{math}O^{a,b} = (\{a,b\}, \{a \sim b\})\end{math} can decompose \begin{math}O_{obs2}\end{math}.
In this section, we prove that excluding induced suborders isomorphic to \begin{math}O_{obs2}\end{math}
is the necessary and sufficient condition for the existence of a questionable representation of width 2.
The result is easy and partially already known, but we give the proof to show that it does hold also for infinite orders.

\begin{lemma}
If an order excludes \begin{math}O_{obs2}\end{math},
then either there is a bipartition of its elements such that no element of the first part is comparable to some element of the second part,
or there is a bipartition of its elements such that all elements of the first part are less than all elements of the second part.
\end{lemma}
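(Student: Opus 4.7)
The plan is to analyze the comparability graph $G$ of $O$ (with an edge $\{x,y\}$ when $x$ and $y$ are comparable) and its complement $\bar{G}$, the incomparability graph, and to show that at least one of them is disconnected. A disconnected $G$ immediately yields the first (``parallel'') bipartition: split its components into two nonempty groups. A disconnected $\bar{G}$ will yield the second (``series'') bipartition with one part entirely less than the other.

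First I would handle the case where $\bar{G}$ is disconnected. Let $C_1, C_2$ be distinct $\bar{G}$-components; by construction every $u \in C_1$ and $w \in C_2$ are comparable. Using only transitivity of $<$, I would show that if $u \sim v$ inside $C_1$ and $w \in C_2$ then $u < w \Leftrightarrow v < w$: indeed $u < w$ with $v > w$ would force $v > u$ contradicting $u \sim v$, while $v \sim w$ would place $v$ and $w$ in the same $\bar{G}$-component. Propagating along $\bar{G}$-paths inside $C_1$ and then $C_2$ shows that between any two components one is entirely less than the other. This defines a strict total order on the set of $\bar{G}$-components, and any nontrivial cut of that order gives the desired bipartition $A < B$.

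The remaining case is when $\bar{G}$ is connected; here the plan is to use $O_{obs2}$-freeness to force $G$ to be disconnected. The key link with graph theory is that the comparability graph of $O_{obs2}$ is the 4-vertex path $P_4$, and conversely any induced $P_4$ in the comparability graph of a poset yields an induced $O_{obs2}$: among the transitive orientations of the edges of $P_4$, only the one matching the pattern $a < b$, $c < b$, $c < d$ avoids forcing additional comparability edges via transitivity. Hence $O$ is $O_{obs2}$-free iff $G$ is $P_4$-free, and the lemma reduces to Seinsche's characterization of cographs: a $P_4$-free graph on at least two vertices cannot have both $G$ and $\bar{G}$ connected. Combined with $\bar{G}$ being connected in our case, this forces $G$ to be disconnected, giving the parallel bipartition.

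The main obstacle is the direct extraction of an induced $P_4$ in the case where both $G$ and $\bar{G}$ are connected, especially since $O$ may be infinite (ruling out induction on $|O|$). The natural strategy: pick a $G$-edge $uv$ and a shortest $\bar{G}$-path $u = w_0, w_1, \ldots, w_k = v$ with $k \geq 2$. If $k \geq 3$, shortness forces $w_i \sim_G w_j$ whenever $|i-j| \geq 2$, and the set $\{w_0, w_1, w_2, w_3\}$ induces exactly $P_4$ in $G$. The hard subcase is $k = 2$ for every choice of $G$-edge, i.e.\ every $G$-edge has a common $\bar{G}$-neighbor, and symmetrically every $\bar{G}$-edge has a common $G$-neighbor. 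Here one needs a careful case analysis on the pairwise relationships among $u$, $v$, a common $\bar{G}$-neighbor $w$ of $uv$, and a common $G$-neighbor $w'$ of, say, the $\bar{G}$-edge $uw$: in two of the four resulting cases the set $\{u, v, w, w'\}$ already induces $P_4$, and the remaining two cases yield homogeneity constraints that propagate to a contradiction with simultaneous $G$- and $\bar{G}$-connectedness. Crucially, $P_4$ is a finite configuration, so the argument works uniformly for finite and infinite $O$.
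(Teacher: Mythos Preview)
Your reduction is genuinely different from the paper's and is the natural conceptual one: excluding $O_{obs2}$ is exactly $P_4$-freeness of the comparability graph $G$, so the lemma becomes the cograph splitting property (a $P_4$-free graph on $\geq 2$ vertices has $G$ or $\bar G$ disconnected). Your $\bar G$-disconnected $\Rightarrow$ series-bipartition step is correct, including the propagation along incomparability paths and the total order on $\bar G$-components; this part does not even use $O_{obs2}$-freeness. The paper, by contrast, never passes to graphs: it fixes an element $x$, partitions the rest into $Ix,\,Lx,\,Mx$, and repeatedly refines ($Ix\_LMx$, $Mx\_MIx$, $Core(Mx\_MIx)$, \ldots), at each branch either producing one of the two bipartitions or an explicit copy of $O_{obs2}$; a final page combines the $Mx$-side and $Lx$-side analyses to cover the genuinely infinite situation in which every element has something below, above, and incomparable to it. Your route is shorter and exposes the link to cographs; the paper's is self-contained for arbitrary cardinality.

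The real gap in your proposal is the proof of the cograph splitting property in the infinite case. Your shortest-path argument dispatches $k\geq 3$ (and, by self-complementarity of $P_4$, the symmetric statement), so you may assume both $G$ and $\bar G$ have diameter~$2$. But then your four-vertex analysis on $\{u,v,w,w'\}$ does not terminate: two of the four cases give an induced $P_4$, yet the other two give a $C_4$ or a paw, and ``homogeneity constraints that propagate to a contradiction'' is a hope, not an argument. Nothing you wrote rules out chasing an infinite sequence of auxiliary vertices without ever pinning down four that induce $P_4$. The result \emph{is} true for infinite graphs, but the standard proofs use induction on $|V|$ or modular decomposition, neither of which is available to you as stated. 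You need either a reference that explicitly covers arbitrary cardinality, or a genuinely local extraction of a $P_4$ from the ``diameter~$2$ in both $G$ and $\bar G$'' hypothesis---and that extraction is precisely the nontrivial work your sketch omits (and is, in effect, what the paper's long case analysis is doing in poset language).
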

\begin{proof}
(We recommend drawing the sets as we define them along the proof.)
By contrapositive, assume that there is no bipartition of its elements such that no element of the first part is comparable to some element of the second part,
and there is no bipartition of its elements such that all elements of the first part are less than all elements of the second part.
For any element \begin{math}x\end{math}, if all other elements are incomparable with
 (resp. less than, resp. more than) \begin{math}x\end{math},
 then \begin{math}x\end{math} and its complement yields
one of the sought bipartitions, a contradiction.
\begin{itemize}
\item If all elements are comparable with \begin{math}x\end{math}, then all elements that are less than \begin{math}x\end{math}
are less than all elements that are more than \begin{math}x\end{math}, by transitivity.
Thus we have one of the sought bipartitions (whatever part we put \begin{math}x\end{math} in), a contradiction.
\item If all elements are either incomparable or more than \begin{math}x\end{math},
let \begin{math}Ix\end{math} and \begin{math}Mx\end{math} be the corresponding sets.
By transitivity, there is no element of \begin{math}Mx\end{math} less than some element of \begin{math}Ix\end{math}.
If there is no element of \begin{math}Mx\end{math} more than some element of \begin{math}Ix\end{math},
then \begin{math}Ix\end{math} and the rest of the order yields one of the sought bipartition, a contradiction.
(We mark this possibility as ``end 1'' for later use.)

Hence, let us split \begin{math}Ix = Ix\_LMx \sqcup Ix\_IMx\end{math} between the elements that are less than some element of \begin{math}Mx\end{math}
and the elements that are incomparable with all elements of \begin{math}Mx\end{math}.
If \begin{math}Ix\_IMx\end{math} is not empty, then
  \begin{itemize}
  \item either all its elements are incomparable with elements in \begin{math}Ix\_LMx\end{math},
  and \begin{math}Ix\_IMx\end{math} yields a bipartition (we mark this possibility as ``end 2'' for later use),
  \item or by transitivity there is an arc from \begin{math}z \in Ix\_LMx\end{math} to \begin{math}t \in Ix\_IMx\end{math},
  and then we have an induced suborder isomorphic to \begin{math}O_{obs2}\end{math} with \begin{math}x,z,t\end{math}, and some element \begin{math}y \in Mx\end{math}
  such that \begin{math}y > z\end{math}.
  \end{itemize}
Thus \begin{math}Ix = Ix\_LMx\end{math}.

Hence, let us split \begin{math}Mx = Mx\_MIx \sqcup Mx\_IIx\end{math} between the elements that are more than some element of \begin{math}Ix\end{math}
and the elements that are incomparable with all elements of \begin{math}Ix\end{math}.
(We already noted that \begin{math}Mx\_MIx\end{math} is not empty.)
If \begin{math}Mx\_IIx\end{math} is not empty, then
  \begin{itemize}
  \item either at least one of its elements \begin{math}y\end{math}
  is incomparable with at least one element \begin{math}z\end{math} in \begin{math}Mx\_MIx\end{math},
  there is some element \begin{math}t \in Ix = Ix\_LMx, t < z\end{math},
  and we have an induced suborder isomorphic to \begin{math}O_{obs2}\end{math},
  \item or by transitivity all elements of \begin{math}Mx\_IIx\end{math} are less than all elements of \begin{math}Mx\_MIx\end{math}.
  \end{itemize}
Thus, we continue the proof assuming that \begin{math}Mx\_IIx\end{math} is empty,
or all elements of \begin{math}Mx\_IIx\end{math} are less than all elements of \begin{math}Mx\_MIx\end{math}.
If all elements of \begin{math}Ix = Ix\_LMx\end{math} are less than all elements of \begin{math}Mx\_MIx\end{math},
then \begin{math}Mx\_MIx\end{math} yields a bipartition.
(We mark this possibility as ``end 3'' for later use.)
If two elements \begin{math}z, z' \in Ix = Ix\_LMx, z < z'\end{math}, then the elements \begin{math}y \in Mx\_MIx\end{math}
that are more than \begin{math}z'\end{math} are more than \begin{math}z\end{math} by transitivity.
The elements \begin{math}y \in Mx\_MIx\end{math}
that are more than \begin{math}z\end{math} are also more than \begin{math}z'\end{math},
because otherwise the suborder induced by \begin{math}x,y,z,z'\end{math} is isomorphic to \begin{math}O_{obs2}\end{math}.
Hence, if there is one connected component in \begin{math}Ix = Ix\_LMx\end{math},
then all elements of \begin{math}Ix = Ix\_LMx\end{math} are less than all elements of \begin{math}Mx\_MIx\end{math},
and \begin{math}Mx\_MIx\end{math} yields a bipartition.
(We mark again this possibility as ``end 3'' for later use.)
Let \begin{math}z, z' \in Ix = Ix\_LMx\end{math} be in two connected components,
and assume that there are some elements \begin{math}y, y' \in Mx\_MIx\end{math},
such that the connected component of \begin{math}z\end{math} is less than \begin{math}y\end{math},
incomparable with \begin{math}y'\end{math},
and the connected component of \begin{math}z'\end{math} is less than \begin{math}y'\end{math},
incomparable with \begin{math}y\end{math}.
If \begin{math}y,y'\end{math} are comparable,
then the suborder induced by \begin{math}z,y,y',z'\end{math} is isomorphic to \begin{math}O_{obs2}\end{math}.
If \begin{math}y,y'\end{math} are incomparable,
then the suborder induced by  \begin{math}y,x,y',z'\end{math} is isomorphic to \begin{math}O_{obs2}\end{math}.
Hence, the subsets of \begin{math}Mx\_MIx\end{math} more than some connected component of \begin{math}Ix = Ix\_LMx\end{math}
are ordered by inclusion, their intersection \begin{math}Core(Mx\_MIx)\end{math} is non-empty.
Let us assume that \begin{math}y \in Core(Mx\_MIx), y' \in Mx\_MIx \setminus Core(Mx\_MIx)\end{math}.
If \begin{math}y,y'\end{math} are incomparable,
then the suborder induced by \begin{math}y',x,y,z\end{math} is isomorphic to \begin{math}O_{obs2}\end{math}
for some correctly chosen \begin{math}z\end{math} in a connected component of \begin{math}Ix = Ix\_LMx\end{math},
that is not less than \begin{math}y'\end{math}.
Moreover by transitivity, if \begin{math}y < y'\end{math}, \begin{math}y' \in Core(Mx\_MIx)\end{math}, a contradiction.
Thus, \begin{math}Core(Mx\_MIx)\end{math} yields the sought bipartition, at last.
(We mark this possibility as ``end 4'' again for later use.)

\item If all elements are either incomparable or less than \begin{math}x\end{math},
let \begin{math}Ix\end{math} and \begin{math}Lx\end{math} be the corresponding sets.
This case is symetric to the previous case.
(Consider the inverse order and observe that \begin{math}O_{obs2} \equiv \Inv(O_{obs2})\end{math},
and that sought bipartitions are preserved by taking the inverse order.)
\end{itemize}

Hence, for any element \begin{math}x\end{math}, we have elements \begin{math}i_x, l_x, g_x, x \sim i_x, l_x < x, x < g_x\end{math}.
(Thus, this is the end of the proof if the order is finite or well-founded.)
We can use the two symetric previous cases and observe that when we found an induced suborder isomorphic to \begin{math}O_{obs2}\end{math}
in the suborder induced by  \begin{math}\{x\} \sqcup Ix \sqcup Mx\end{math}, it was also an induced suborder of the bigger considered orders.
Thus, we have 4 cases to consider for \begin{math}\{x\} \sqcup Ix \sqcup Mx\end{math} (resp. \begin{math}\{x\} \sqcup Ix \sqcup Lx\end{math}):
\begin{itemize}
\item[(end 1)] \begin{math}Ix\end{math} is incomparable with \begin{math}\{x\} \sqcup Mx\end{math} (resp. \begin{math}\{x\} \sqcup Lx\end{math}),
\item[(end 2)] \begin{math}Ix\_IMx\end{math} (resp. \begin{math}Ix\_ILx\end{math}) is incomparable with
               \begin{math}\{x\} \sqcup Mx \sqcup Ix\_LMx\end{math}
              (resp. \begin{math}\{x\} \sqcup Lx \sqcup Ix\_MLx\end{math}),
\item[(end 3)] \begin{math}Mx\_MIx\end{math} (resp. \begin{math}Lx\_LIx\end{math}) is more (resp. less)
               than \begin{math}\{x\} \sqcup Ix \sqcup (Mx \setminus Mx\_MIx)\end{math}
                 (resp. \begin{math}\{x\} \sqcup Ix \sqcup (Lx \setminus Lx\_LIx)\end{math}),
\item[(end 4)] \begin{math}Core(Mx\_MIx)\end{math} (resp. \begin{math}Core(Lx\_LIx)\end{math})
                 is more (resp. less) than \begin{math}\{x\} \sqcup Ix \sqcup (Mx \setminus Core(Mx\_MIx))\end{math}
                 (resp. \begin{math}\{x\} \sqcup Ix \sqcup (Lx \setminus Core(Lx\_LIx))\end{math}).
\end{itemize}
(end 3) and (end 4) may be merged by considering that in (end 3) \begin{math}Core(Mx\_MIx) = Mx\_MIx\end{math}
(resp. \begin{math}Core(Lx\_LIx) = Lx\_LIx\end{math}).
We note (end i') when the case applies to \begin{math}\{x\} \sqcup Ix \sqcup Lx\end{math} instead of
\begin{math}\{x\} \sqcup Ix \sqcup Mx\end{math}.
It is easy to see that if (end 3,4) (resp. (end 3,4')) holds, then by transitivity,
\begin{math}Core(Mx\_MIx)\end{math} is more than \begin{math}\{x\} \sqcup Ix \sqcup (Mx \setminus Core(Mx\_MIx))\end{math}
implies that \begin{math}Core(Mx\_MIx)\end{math} is more than \begin{math}Lx\end{math},
and \begin{math}Core(Mx\_MIx)\end{math} (resp. \begin{math}Core(Lx\_LIx)\end{math}) yields the sought bipartition.
It remains the following combinations:
\begin{itemize}
\item (end 1) and (end 1'), \begin{math}Ix\end{math} yields a bipartition,
\item ((end 1) and (end 2')) or ((end 2) and (end 1')), \begin{math}Ix\_ILx\end{math} (resp. \begin{math}Ix\_IMx\end{math}) yields a bipartition,
\item (end 2) and (end 2'), if  \begin{math}Ix\_IMx \subseteq Ix\_ILx\end{math} (resp. \begin{math}Ix\_ILx \subseteq Ix\_IMx\end{math}),
                            then clearly \begin{math}Ix\_IMx\end{math} (resp. \begin{math}Ix\_ILx\end{math}) yields a bipartition,
                            otherwise let \begin{math}y \in Ix\_IMx \setminus Ix\_ILx\end{math}, \begin{math}z \in Ix\_ILx \setminus Ix\_IMx\end{math},
                            \begin{math}y' \in Lx, y' < y\end{math}, \begin{math}z' \in Mx, z' > z\end{math},
                            the suborder induced by \begin{math}y,y',z',z\end{math} is isomorphic to \begin{math}O_{obs2}\end{math}.
\end{itemize}
This ends the proof.
\end{proof}

\begin{theorem}
\label{theorem:order_without_obs2_to_qr}
Any partial order \begin{math}O\end{math} of cardinal \begin{math}\aleph\end{math}, without induced suborder isomorphic to \begin{math}O_{obs2}\end{math},
 has a questionable representation of width 2
 and length at most \begin{math}\alpha\end{math},
where \begin{math}\alpha\end{math} is the ordinal of cardinal \begin{math}\aleph\end{math},
if \begin{math}\aleph\end{math} is finite,
and \begin{math}\alpha = \omega_{\beta + 1}\end{math},
if \begin{math}\aleph = \aleph_{\beta}\end{math} is infinite.
\end{theorem}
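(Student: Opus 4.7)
The plan is to prove the theorem by recursively applying the preceding lemma to build a (possibly transfinite) binary decomposition tree $T$ of $O$, then to serialize the internal nodes of $T$ into an ordinal-indexed sequence $\mathcal{O}_j$. Each internal node $S \subseteq \Domain(O)$ with $|S| \geq 2$ is split by the lemma, which applies to the induced suborder on $S$ since the exclusion of $O_{obs2}$ is hereditary, into $S = S_1 \sqcup S_2$ labelled either ``comparable'' (all of $S_1$ below all of $S_2$) or ``incomparable''; we make $S_1$ and $S_2$ the children of $S$ in $T$ and iterate (transfinitely if necessary) until every leaf is a singleton, producing a bijection between leaves of $T$ and $\Domain(O)$.

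We then enumerate the internal nodes of $T$ by a level-indexed well-ordering $(v_k)_{k < j}$ that respects the ancestor relation: every ancestor precedes every descendant. We set $\mathcal{O}_j[k] := O^{0,1}$ if $v_k$ is a comparable split and $\mathcal{O}_j[k] := O^{a,b}$ otherwise. For $x \in \Domain(O)$ with corresponding leaf $L(x) \in T$, we define $w(x)$ by taking $w(x)[k]$ to record which child of $v_k$ contains $L(x)$ when $v_k$ is an ancestor of $L(x)$, and taking $w(x)[k]$ to be the first element of $\mathcal{O}_j[k]$ otherwise (padding); the length of $w(x)$ is chosen just beyond the largest position of an ancestor of $L(x)$.

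Correctness reduces to the observation that for distinct $x, y$ the first question between $w(x)$ and $w(y)$ occurs exactly at the position $\ell$ of the lowest common ancestor $v_\ell$ of $L(x)$ and $L(y)$. At any earlier position $k < \ell$, the level-by-level ordering forces $v_k$ to have depth at most that of $v_\ell$; since $v_\ell$ lies on the root-to-$L(x)$ and root-to-$L(y)$ paths, the ancestor of $L(x)$ (respectively of $L(y)$) at depth at most $d(v_\ell)$ is an ancestor of $v_\ell$. Hence $v_k$ is either a common ancestor of both leaves, in which case both words record the same child of $v_k$, or $v_k$ is an ancestor of neither leaf, in which case both words pad; either way they agree at position $k$. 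At position $\ell$ the two words record the two distinct children of $v_\ell$, producing the question, and the choice of $\mathcal{O}_j[\ell]$ makes that question encode precisely the $O$-relation between $x$ and $y$ prescribed by $v_\ell$'s split type.

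The main obstacle will be the length bound $j \leq \alpha$. For finite $\aleph = n$ the tree has $n-1$ internal nodes and $j \leq n - 1 \leq \alpha = n$. For infinite $\aleph = \aleph_\beta$, the tree may have transfinite depth (for instance, decomposing $\omega_1$ as a total order by repeatedly peeling off the minimum yields a right-spine of ordinal depth $\omega_1$), so the level-by-level enumeration must be set up carefully: each level has at most $\aleph_\beta$ nodes, there are at most $\omega_\beta$-many levels, and each level can be well-ordered by an ordinal strictly less than $\omega_{\beta+1}$. Since $\omega_{\beta+1}$ is regular, a sum of fewer than $\omega_{\beta+1}$ such ordinals remains strictly below $\omega_{\beta+1}$, giving $j < \omega_{\beta+1} = \alpha$. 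The subtle technical point is ensuring that the recursive construction of $T$ is well-defined at limit depths and that the resulting level structure admits the needed well-ordering.
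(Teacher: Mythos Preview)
Your approach shares the paper's core strategy---recursively bipartition $O$ via the preceding lemma to build a (transfinite) decomposition tree, then extract a questionable representation---but the paper executes the second step differently and more simply. Rather than serializing all internal nodes by BFS with padding, the paper indexes words by \emph{depth}: at every rank it uses the single width-$3$ order $O^{0,1,a}=(\{0,1,a\},\{0<1,\ a\sim 0,\ a\sim 1\})$, and the depth-$i$ digit of $w(x)$ records which side of the depth-$i$ split $x$ fell into, using the pair $\{0,1\}$ for an ordered split and $\{0,a\}$ for an incomparable one. Two elements lying in distinct depth-$i$ blocks already have a question at an earlier rank, so the coexistence of both split types at the same rank is harmless. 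Width $3$ is then reduced to $2$ by the substitution $0\mapsto(0,b)$, $1\mapsto(1,b)$, $a\mapsto(0,a)$ into consecutive $O^{0,1},O^{a,b}$ slots, which preserves first questions and (for infinite $\aleph$) leaves the length bound unchanged. Your padding scheme achieves width $2$ directly at the cost of extra bookkeeping; both routes are correct.

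One slip in your length analysis: the claim ``there are at most $\omega_\beta$-many levels'' is false---decomposing the countable order $\omega\cdot 2$ by repeatedly peeling off the minimum already gives depth $\omega\cdot 2>\omega_0$. What \emph{is} true (and what the paper uses) is that every leaf has depth of cardinality at most $\aleph_\beta$, since each split along its root path sheds at least one element into the sibling branch; hence the tree depth is an ordinal strictly below $\omega_{\beta+1}$. With that corrected bound in place, your regularity argument for $j<\omega_{\beta+1}$ goes through verbatim.
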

\begin{demo}
We first consider a questionable representation of width 3 using only the partial order \begin{math}O^{0,1,a} = (\{0,1,a\}, \{0 < 1, a \sim 0, a \sim 1\})\end{math}.
Using the previous lemma, we can see that we can recursively split any induced suborder of \begin{math}O\end{math}
in two incomparable or ordered induced suborders.
Thus, we obtain a questionable representation of width 3 and unknown length.
Clearly, we can do so that each induced suborder ``of rank \begin{math}i\end{math}'' is split exactly
in two non empty parts by the element-items of rank \begin{math}i\end{math} in the words associated to its elements.
(\begin{math}O\end{math} is the order of rank 0.
Each induced suborder ``of rank \begin{math}i\end{math}'' uses exactly two element-items among \begin{math}\{0,1,a\}\end{math}.
We can choose \begin{math}\{0,1\}\end{math} for order composition and \begin{math}\{0,a\}\end{math} for disjoint union.
)

Clearly, the number of elements in \begin{math}O\end{math} cannot be less than the cardinal of the ordinal length of a word associated to an element.
Thus, the length of the questionable representation is at most \begin{math}\alpha = \omega_{\beta + 1}\end{math}
(remember the strict inequality in the definition of the ordinal length of a questionable representation).
This infinite length is not changed by replacing each order-item \begin{math}O^{0,1,a}\end{math}
by two consecutive order-items \begin{math}O^{0,1}\end{math} and \begin{math}O^{a,b}\end{math}.
(The element-items \begin{math}0,1,a\end{math} are replaced by \begin{math}(0,b),(1,b),(0,a)\end{math} respectively.)
\end{demo}

\begin{openproblem}
Can the length of infinite questionable representations obtained in Theorem~\ref{theorem:order_without_obs2_to_qr} be improved?
\end{openproblem}

\section{Questionable representations for partial orders}
\label{section:questionable_representations_for_partial_orders}

We first remark that there is no need to distinguish between strict and nonstrict, nontotal questionable representations of partial orders.
Indeed let us assume that a partial order \begin{math}O\end{math} admits a nonstrict questionable representation
using the finite partial order \begin{math}O'\end{math}.
Let us name \begin{math}u,v \in O'\end{math} two elements that are incomparable.
For any element \begin{math}x \in O\end{math} associated to the word \begin{math}w_x\end{math},
such that
\begin{itemize}
\item some element \begin{math}y \in O\end{math} associated to the word \begin{math}w_y\end{math} is incomparable with it,
\item \begin{math}w_x\end{math} and \begin{math}w_y\end{math} do not have a question,
\item \begin{math}\Length(w_x) \leq \Length(w_y)\end{math},
\end{itemize}
we can lengthen all the words of length at least \begin{math}\Length(w_x)\end{math} by one:
\begin{itemize}
\item \begin{math}w'_x[i] = w_x[i]\end{math}, \begin{math}i \in \Length(w_x)\end{math},
\begin{math}w'_x[\Length(w_x)] = u\end{math},
\item \begin{math}w'_z[i] = w_z[i]\end{math}, \begin{math}i \in \Length(w_x)\end{math},
\begin{math}w'_z[\Length(w_x)] = v\end{math},
\begin{math}w'_z[i+1] = w_z[i]\end{math}, \begin{math}i \in (\Length(w_z) \setminus \Length(w_x))\end{math},
for all \begin{math}z \in (O \setminus \{x\})\end{math} such that \begin{math}\Length(w_x) \leq \Length(w_z)\end{math}
(\begin{math}y\end{math} is such a \begin{math}z\end{math}).
\end{itemize}
Hence, by transfinite induction, we can remove all incomparable relationship that is not explicitely in the questionable representation,
and we end up with a strict questionable representation.

We also remark that we may consider that the partial order \begin{math}O\end{math} is connected.
Indeed, let us consider the set \begin{math}CC\end{math} of the connected components of \begin{math}O\end{math},
we may start all the words of the questionable representation by prefixes of the same length
where all digits are \begin{math}u,v\end{math} two elements that are incomparable.
Giving the same prefix to all elements of the same connected component,
and such that for any two components, at least one digit of their respective prefix differs.
In particular, if the number of connected components is finite,
the length of the prefixes may be a binary logarithm of this number of component.

Let us consider the \emph{questionable-width} of an order as the minimum cardinal such that there exists
a questionable representation of this order using only orders of this cardinal.
We first note that no bound exists on the questionable-width of all orders.
Indeed let us define a ``zigzag'' of ordinal length \begin{math}\alpha\end{math},
denoted \begin{math}Zigzag_\alpha\end{math},
as an order of height 2, made of two sequences of elements
\begin{math}S = (s_i)_{i \in \alpha}, S' = (s'_i)_{i \in \alpha}\end{math} of length \begin{math}\alpha\end{math},
such that the comparability relationships are
\begin{itemize}
\item \begin{math}s_i < s'_i, i \in \alpha\end{math},
\item \begin{math}s_i < s'_{i+1}, i,i+1 \in \alpha\end{math},
\item \begin{math}s_j < s'_i, j < i \in \alpha\end{math}, and \begin{math}i\end{math} is a limit ordinal.
\end{itemize}

\begin{lemma}
Let \begin{math}\aleph\end{math} be the cardinal of \begin{math}Zigzag_\alpha\end{math},
then no questionable representation of \begin{math}Zigzag_\alpha\end{math} has width less than \begin{math}\aleph\end{math}.
\end{lemma}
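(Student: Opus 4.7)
The plan is to argue by contradiction: assume $Zigzag_\alpha$ admits a questionable representation $(w(x))_x$ over a sequence $(O_r)_r$ of width $\kappa<\aleph$, and derive a contradiction by collapsing the representation to a single rank of $\mathcal{O}$ that must contain $\aleph$ distinct digits. For each comparable pair $(s_i,s'_j)$ of the zigzag (that is, $j=i$, $j=i+1$, or $j$ a limit ordinal strictly greater than $i$), denote by $\rho(i,j)$ the rank of the question of $w(s_i)$ and $w(s'_j)$, so that $w(s_i)[\rho(i,j)]<w(s'_j)[\rho(i,j)]$ in $O_{\rho(i,j)}$.

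The first and main step is a rank-synchronization lemma: all $\rho(i,j)$ coincide with a single rank $\rho$. To see that the two pairs sharing $s_i$, namely $(s_i,s'_i)$ and $(s_i,s'_{i+1})$, have equal $\rho$'s, suppose for instance $\rho(i,i)<\rho(i,i+1)$; then at rank $\rho(i,i)$ the word $w(s'_{i+1})$ still agrees with $w(s_i)$ (its own question is strictly later), while $w(s'_i)[\rho(i,i)]>w(s_i)[\rho(i,i)]$. Since $w(s'_i)$ and $w(s'_{i+1})$ both coincide with $w(s_i)$ on every rank below $\rho(i,i)$, their first difference lies at $\rho(i,i)$ with ordered digits, giving $w(s'_{i+1})<w(s'_i)$ and contradicting $s'_i\sim s'_{i+1}$. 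The same chase applies to any two comparable pairs sharing either an $s_i$ or an $s'_j$ — including the limit edges $s_j<s'_\lambda$, via the antichains $s_j\sim s_{j'}$ and $s'_j\sim s'_{j'}$ — so, by chaining equalities along the connected comparability graph of the zigzag, every $\rho(i,j)$ equals a common rank $\rho$.

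Setting $a_i:=w(s_i)[\rho]$ and $b_j:=w(s'_j)[\rho]$, every word of $Zigzag_\alpha$ now shares the same prefix of length $\rho$, and the ordered relations $s_i<s'_j$ translate directly into $a_i<b_j$ in $O_\rho$. If $a_i=a_{i'}$ for some $i<i'$, then $w(s_{i'})$ and $w(s'_i)$ would agree on $[0,\rho)$ and carry the ordered digit pair $a_{i'}=a_i<b_i$ at rank $\rho$, forcing $w(s_{i'})<w(s'_i)$ and contradicting $s_{i'}\sim s'_i$; hence the $a_i$'s are pairwise distinct, and by the antichain rule they are pairwise incomparable in $O_\rho$. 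A symmetric argument using $w(s_{j'})$ against $w(s'_j)$ shows the $b_j$'s are pairwise distinct and pairwise incomparable in $O_\rho$. Finally $a_i=b_j$ is impossible, since it would yield $b_j<b_i$ when $i\ne j$ (contradicting $b_i\sim b_j$) or $a_i<a_i$ when $i=j$. Hence $O_\rho$ contains the $2|\alpha|=\aleph$ pairwise distinct digits $\{a_i\}_{i<\alpha}\cup\{b_j\}_{j<\alpha}$, contradicting $\kappa<\aleph$.

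The main obstacle is the rank-synchronization lemma: it must cover all comparable pairs simultaneously, including the ones generated by the limit-ordinal edges $s_j<s'_\lambda$, and the chain argument must never ``jump'' to a different rank. Once synchronization is established, the pairwise distinctness and incomparability of the $2|\alpha|$ digits is essentially a direct unpacking of the zigzag's comparability pattern.
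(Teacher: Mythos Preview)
Your proof is correct and takes a genuinely different route from the paper's.

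The paper argues at rank~$0$: it fixes the set $X$ of all elements sharing some common value of the first digit, and through a case analysis (depending on whether $X$ initially contains two bottom elements, two top elements, or one of each) shows that $X$ must absorb a pair $s_i, s'_i$; it then spreads $X$ forward along the successor edges and across limit ordinals to a final segment, and backward to an initial segment, concluding $X = \Domain(Zigzag_\alpha)$. Thus any two equal first digits force \emph{all} first digits to coincide, and iterating over ranks gives the contradiction.

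Your approach instead locates the single rank $\rho$ where every question lives, by chaining equalities $\rho(i,j) = \rho(i,k)$ along shared elements (using the antichains $S$ and $S'$ to forbid ordered digits between $w(s'_j)$ and $w(s'_k)$, respectively $w(s_i)$ and $w(s_{i'})$). The delicate point is that the graph of comparable pairs must be connected by \emph{finite} paths --- which the limit edges $s_j < s'_\lambda$ indeed provide, since $(s_0,s'_0) \to (s_0,s'_\lambda) \to (s_\lambda,s'_\lambda)$ reaches any limit index in two steps, and any successor index $\lambda+n$ in $2n$ further steps. Once synchronization is established (and with it the common prefix of length $\rho$), the pairwise distinctness of the $2|\alpha|$ digits at rank $\rho$ is, as you say, a direct unpacking.

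The two arguments are dual: the paper shows that equal digits spread to the whole domain, you show that the unique rank of disagreement must already carry $\aleph$ distinct values. Your version is more explicit about where the obstruction sits and yields the slightly sharper count $|O_\rho|\ge 2|\alpha|$, at the cost of the bookkeeping for the synchronization lemma; the paper's absorption argument is more uniform (the same spreading step applies at every rank) but leaves the final transfinite iteration over ranks implicit.
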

\begin{demo}
Consider a questionable representation \begin{math}w\end{math} of \begin{math}Zigzag_\alpha\end{math}.
Assume for a contradiction that \begin{math}w(x)[0] \in O, \forall x \in \Domain(Zigzag_\alpha)\end{math}
and that \begin{math}O\end{math} has cardinal strictly less than \begin{math}\aleph\end{math}.
Then, there is at least two elements \begin{math}x,y \in \Domain(Zigzag_\alpha)\end{math}
such that \begin{math}w(x)[0] = w(y)[0]\end{math}.
Let \begin{math}X = \{z \in \Domain(Zigzag_\alpha) | w(z)[0] = w(x)[0]\}\end{math}.

\begin{enumerate}[1)]
\item If \begin{math}X\end{math} contains both \begin{math}s_i, s'_j, \text{ for some } i,j \in \alpha\end{math},
      then \begin{math}s'_i > s_i \wedge s'_i \sim s'_j\end{math}, thus \begin{math}s'_i \in X\end{math}.
\item If \begin{math}X\end{math} contains both \begin{math}s_i, s_j, \text{ for some } i,j \in \alpha\end{math},
      \begin{itemize}
      \item if \begin{math}i\end{math} is a successor ordinal,
        when \begin{math}j \neq i - 1\end{math}, \begin{math}s'_i > s_i \wedge s'_i \sim s_j\end{math}, thus \begin{math}s'_i \in X\end{math};
        when \begin{math}j = i - 1\end{math}, \begin{math}s'_j > s_j \wedge s'_j \sim s_i\end{math}, thus \begin{math}s'_j \in X\end{math};
      \item if \begin{math}i\end{math} is a limit ordinal,
        when \begin{math}j > i\end{math}, \begin{math}s'_i > s_i \wedge s'_i \sim s_j\end{math}, thus \begin{math}s'_i \in X\end{math};
        when \begin{math}j < i\end{math}, \begin{math}s'_j > s_j \wedge s'_j \sim s_i\end{math}, thus \begin{math}s'_j \in X\end{math}.
      \end{itemize}
\item If \begin{math}X\end{math} contains both \begin{math}s'_i, s'_j, \text{ for some } i,j \in \alpha\end{math},
      \begin{itemize}
      \item if \begin{math}j\end{math} is a successor ordinal,
        when \begin{math}j \neq i + 1\end{math}, \begin{math}s_i < s'_i \wedge s_i \sim s'_j\end{math}, thus \begin{math}s_i \in X\end{math};
        when \begin{math}j = i + 1\end{math}, \begin{math}s_j < s'_j \wedge s_j \sim s'_i\end{math}, thus \begin{math}s_j \in X\end{math};
      \item if \begin{math}j\end{math} is a limit ordinal,
        when \begin{math}j < i\end{math}, \begin{math}s_i < s'_i \wedge s_i \sim s'_j\end{math}, thus \begin{math}s_i \in X\end{math};
        when \begin{math}j > i\end{math}, \begin{math}s_j < s'_j \wedge s_j \sim s'_i\end{math}, thus \begin{math}s_j \in X\end{math};
      \end{itemize}
\end{enumerate}
In all cases, we obtain a pair \begin{math}s_i, s'_i \in X, \text{ for some } i \in \alpha\end{math}.

But then \begin{math}s'_{i+1} > s_i \wedge s'_{i+1} \sim s'_i\end{math},
thus \begin{math}s'_{i+1} \in X\end{math},
and \begin{math}s_{i+1} < s'_{i+1} \wedge s_{i+1} \sim s_i\end{math},
thus \begin{math}s_{i+1} \in X\end{math}.
If there is a limit ordinal \begin{math}j \in \alpha, j > i\end{math},
then \begin{math}s'_j > s_i \wedge s'_j \sim s'_i\end{math},
thus \begin{math}s'_j \in X\end{math},
and \begin{math}s_j < s'_j \wedge s_j \sim s_i\end{math},
thus \begin{math}s_j \in X\end{math}.
Hence, \begin{math}X\end{math} is the union of all \begin{math}\{s_i, s'_i\}\end{math},
for \begin{math}i\end{math} in a final segment of \begin{math}\alpha\end{math}.

But then, if \begin{math}i\end{math} is a successor ordinal,
\begin{math}s_{i-1} < s'_i \wedge s_{i-1} \sim s_i\end{math},
thus \begin{math}s_{i-1} \in X\end{math},
and \begin{math}s'_{i-1} > s_{i-1} \wedge s'_{i-1} \sim s'_i\end{math},
thus \begin{math}s'_{i-1} \in X\end{math}.
If \begin{math}i\end{math} is a limit ordinal,
then \begin{math}\forall j < i\end{math} \begin{math}s_j < s'_i \wedge s_j \sim s_i\end{math},
thus \begin{math}s_j \in X\end{math},
and \begin{math}s'_j > s_j \wedge s'_j \sim s'_i\end{math},
thus \begin{math}s'_j \in X\end{math}.
Hence, \begin{math}X\end{math} is the union of all \begin{math}\{s_i, s'_i\}\end{math},
for \begin{math}i\end{math} in an initial segment of \begin{math}\alpha\end{math}.

Thus \begin{math}X = \Domain(Zigzag_\alpha)\end{math}.
The desired contradiction.
\end{demo}

In light of this lemma, it may appear that questionable representation and questionable-width
are rather weak compared to tree-decomposition/tree-width and clique-decomposition/clique-width.
However things are not that simple, there are orders with
questionable-width 2 and arbitrary high tree-width.
%\begin{itemize}
%\item questionable-width 2 and arbitrary high tree-width,
%\item questionable-width 2 and arbitrary high clique-width.
%\end{itemize}
First we note that tree-width or clique-width may be used to mesure finite orders in two fashions:
using the directed graph of the comparability relation,
or using the directed graph of the cover relation.
(An element \begin{math}x\end{math} covers an element \begin{math}y\end{math}
if and only if \begin{math}x < y\end{math} and there is no element \begin{math}z\end{math}
with \begin{math}x < z < y\end{math}.
The directed graph of the cover relation is known as Hasse diagram
and is usually drawn with edges instead of arcs assuming that the orientation is from bottom to top.)

The comparability relation is the transitive closure of the cover relation
but the cover relation is not suitable for infinite orders.

If we use the comparability relation, the tree-width of finite total orders is not bounded whilst the questionable-width is 2.
If we use the cover relation, consider orders on two levels such that any element in the bottom level is less than elements in the top level:
the tree-width of such complete bipartite graphs is not bounded whilst the questionable-width is 2 since they are (itov) orders.
Hence, we cannot say that tree-width is worse or better than questionable-width.
They are different.

For clique-width, the comparability graph has bounded clique-width if and only if the cover graph has bounded clique-width
(we recommend reading the book by \cite{DBLP:books/daglib/0030804}).

Note that trunks, cedars, (itov) orders, orders without induced suborder isomorphic to \begin{math}O_{obs2}\end{math},
have clique-width 2, as directed co-graphs, but unbounded tree-width.
See \cite{DBLP:conf/esa/EibenGKO16}, \cite{DBLP:conf/ijcai/KangasHNK16},
and \cite{DBLP:conf/iwpec/KangasKS18} for the complexity of counting linear extensions of orders of bounded tree-width.

%Preuve 2
Consider the following orders: a ``trunk with woodpeckers'' of rank \begin{math}k\end{math},
denoted by \begin{math}TW_{k}\end{math},
is an order with a trunk/chain made of \begin{math}k\end{math} levels/elements,
together with ``woodpeckers'';
a woodpecker \begin{math}x\end{math} is an element that is regular to the trunk/chain,
and its ``legs'' are the arcs between the highest level of the trunk with elements less than \begin{math}x\end{math},
and the arcs between the lowest level of the trunk with elements more than \begin{math}x\end{math};
these two levels are denoted \begin{math}level_i(x)\end{math} and \begin{math}level_s(x)\end{math}.
We take such orders such that \begin{math}(level_i(x), level_s(x)) \neq (level_i(y), level_s(y))\end{math},
whenever \begin{math}x,y\end{math} are two distinct woodpeckers.

Moreover, two woodpeckers \begin{math}wp, wp'\end{math} have no cover relationship.
Note that such orders are not (itov):
if the trunk is a chain of height 6 (\begin{math}t_0, t_1, t_2, t_3, t_4, t_5\end{math}),
with the 4 woodpeckers \begin{math}wp_0, wp_1, wp_2, wp_3\end{math},
and \begin{math}(level_i(wp_0), level_s(wp_0)) = (t_0, t_2), (level_i(wp_1), level_s(wp_1)) = (t_1, t_3),
(level_i(wp_2), level_s(wp_2)) = (t_2, t_4), (level_i(wp_3), level_s(wp_3)) = (t_3, t_5)\end{math}),
then the suborder induced by the woodpeckers is isomorphic to \begin{math}O_{obs2}\end{math},
and the order is not (itov).
We have \begin{math}level_i(x) + 2 \leq level_s(x)\end{math}
(if \begin{math}level_i(x) + 2 = level_s(x)\end{math}, then the woodpecker belongs to the relatively maximum full trunk,
but we still say it is a woodpecker).
Thus, there is at most \begin{math}\frac{(k-1)(k-2)}{2}\end{math} woodpeckers.
We take the maximum number of woodpeckers,
and the minimum number of elements in the trunk.
Thus the trunk is a chain of \begin{math}k\end{math} elements,
and there is \begin{math}\frac{k^2-3k}{2} + 1\end{math} woodpeckers:
\begin{math}|\Domain(TW_k)| = \frac{k^2-k}{2} + 1\end{math}.

%Preuve 1
%%and we take \begin{math}\lceil(k-3)/2\rceil\end{math} elements in each level of the trunk.
%%Thus there is \begin{math}\frac{k^2-3k}{2} + 1\end{math} woodpeckers,
%%and between \begin{math}\frac{k^2-3k}{2}\end{math} and \begin{math}\frac{k^2-3k}{2} + \frac{k}{2}\end{math} elements in the trunk.
%%If \begin{math}k \geq 5\end{math} is odd, there is exactly one woodpecker more than elements in the trunk.

We observe that \begin{math}TW_{k}\end{math} is not up-regular,
but it does not contain an induced suborder isomorphic to \begin{math}O_{obs1}\end{math}.
Indeed, at least two elements of such obstruction should be outside of the trunk/chain,
otherwise there would be a chain of height 3 in \begin{math}O_{obs1}\end{math}.
\begin{itemize}
\item if the trunk contains two elements, then this is a chain of height 2,
hence the other chain of height 2 in \begin{math}O_{obs1}\end{math} is made of woodpeckers;
if the trunk contains one element, then at least one chain of height 2 in \begin{math}O_{obs1}\end{math} is made of woodpeckers;
but two woodpeckers have no cover relationship and thus, they are ordered if and only if
the upper leg of one is at most the lower leg of the other;
since any trunk element is above this upper leg or below this lower leg,
any trunk element is comparable with at least one element of the chain of 2 woodpeckers, a contradiction.
\item if all elements are woodpeckers, then we have one upper leg below some lower leg for the first chain,
and another upper leg below another lower leg for the second chain.
But then, assume without loss of generality that the first upper leg is below the second upper leg,
then the first upper leg is below the second lower leg, and we have at least three comparability;
again, we obtain a contradiction.
\end{itemize}

\begin{lemma}
For \begin{math}k \geq 8\end{math}, the Hasse diagram of \begin{math}TW_k\end{math} has clique-width at least \begin{math}\lceil\frac{k}{13}\rceil\end{math}.
\end{lemma}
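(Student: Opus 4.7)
\begin{demo}
(Proof plan.)

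The plan is to exhibit, inside the Hasse diagram of $TW_k$, a clean induced copy of the $1$-subdivision of a complete graph and then invoke a balanced-separator lower bound on the clique-width of such subdivisions.

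First I would select the even-indexed chain vertices $t_0, t_2, \ldots, t_{2(m-1)}$, where $m = \lceil k/2 \rceil$, together with, for each pair $a < b$ in $\{0, \ldots, m-1\}$, the woodpecker $w_{2a, 2b}$ whose legs land on $t_{2a}$ and $t_{2b}$ (it exists since $2b - 2a \geq 2$). Any two selected chain vertices are at chain-distance at least $2$, so no Hasse edge connects them directly; two woodpeckers never cover one another, so they are pairwise non-adjacent; and a selected chain vertex $t_{2a}$ is Hasse-adjacent to a selected woodpecker $w_{2a', 2b'}$ if and only if $2a \in \{2a', 2b'\}$. The induced subgraph is therefore exactly $S(K_m)$, the $1$-subdivision of $K_m$. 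Since clique-width is non-increasing under induced subgraphs, $\mathrm{cwd}(\mathrm{Hasse}(TW_k)) \geq \mathrm{cwd}(S(K_m))$.

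It remains to lower-bound $\mathrm{cwd}(S(K_m))$ by $\lceil m/c \rceil$ for a constant $c \leq 6.5$; combined with $m = \lceil k/2 \rceil$ this yields the claim. I would use the standard balanced-separator property of clique-width: any $c$-expression for a graph $G$ produces a subset $X \subseteq V(G)$ with $|V|/3 \leq |X| \leq 2|V|/3$ such that the bipartite adjacency matrix $M[X, V \setminus X]$ has at most $c$ distinct rows (vertices sharing a label after the sub-expression that builds $X$ must have identical future neighborhoods in $V \setminus X$). Applying this to $S(K_m)$, write $V_1, V_2$ for the sets of original and subdivision vertices, and set $a = |X \cap V_1|$. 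The pigeonhole on where each subdivision vertex $u_{v,v'}$ lies forces a dichotomy: either every $v \in X \cap V_1$ has some $u_{v, v'}$ on the opposite side --- giving $a$ distinct rows from $X \cap V_1$ into $(V \setminus X) \cap V_2$ --- or every $v' \in (V \setminus X) \cap V_1$ has some $u_{v, v'}$ on the $X$-side, giving $m - a$ distinct rows from $X \cap V_2$ into $(V \setminus X) \cap V_1$. When $a$ is close to $0$ or $m$, the balance constraint $|X| \leq 2|V|/3$ becomes binding: the ``opposite-side'' interior subdivisions cannot all be packed into one part, and the ones forced across span a large parity subspace in $M[X, V \setminus X]$, contributing $\Omega(m)$ distinct rows on their own.

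The main obstacle is quantitative: squeezing the constant down to $13$ rather than the $26$ or $52$ a careless analysis produces. The qualitative skeleton (induced $S(K_m)$ plus balanced-separator lemma) is standard, but obtaining the tight factor requires optimizing the three-case split on $a$ and possibly working with the full Hasse diagram of $TW_k$ rather than passing through the induced $S(K_m)$, to save a factor of two by exploiting the chain edges. The hypothesis $k \geq 8$ rules out the small-$m$ degeneracies in which $S(K_m)$ is too thin for the parity argument to bite.
\end{demo}
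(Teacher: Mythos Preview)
Your approach is genuinely different from the paper's, and the reduction step is correct: the even-indexed trunk vertices together with the woodpeckers whose two legs land on a pair of them do induce a copy of $S(K_m)$, $m=\lceil k/2\rceil$, in the Hasse diagram of $TW_k$ (consecutive even trunk vertices are non-adjacent, woodpeckers never cover one another, and the woodpecker $w_{2a,2b}$ is Hasse-adjacent exactly to $t_{2a}$ and $t_{2b}$). So any lower bound on $\mathrm{cwd}(S(K_m))$ transfers.

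The paper does not pass through $S(K_m)$. It runs the balanced-separator argument directly on $TW_k$: at a $1/3$--$2/3$ node of an optimal clique-expression it lets $S_T$ be the side holding at least half of the trunk and $S_W$ the other side, shows that at least $k^2/24+k/12-2/3$ woodpeckers of $S_W$ have a leg in $S_T$, and then splits on which of $S_T,S_W$ lies below the node. In one case many woodpeckers must carry pairwise distinct labels, in the other many trunk vertices must; the constant $13$ drops out of an explicit optimisation of the resulting inequality in a parameter $q$.

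Your route is conceptually cleaner but, as you yourself flag, the direct separator argument on $S(K_m)$ is only a sketch. Two concrete repairs you would need: in your case~(A) two original vertices $v,v'\in X$ \emph{can} share a nonempty row in $Y$ (precisely when both rows equal $\{u_{v,v'}\}$), so you get $\lceil a/2\rceil$ distinct rows rather than $a$; and in the small-$a$ regime you must actually invoke the balance constraint, since case~(A) holds vacuously at $a=0$. A shortcut that bypasses all of this: $S(K_m)$ contains no $K_{3,3}$ subgraph (subdivision vertices have degree~$2$) and has tree-width $m-1$ (edge subdivision preserves tree-width for $m\ge 3$), so the Gurski--Wanke bound $\mathrm{tw}\le 3(t-1)\,\mathrm{cwd}-1$ with $t=3$ yields $\mathrm{cwd}(S(K_m))\ge m/6$, hence $\mathrm{cwd}(\mathrm{Hasse}(TW_k))\ge \lceil k/2\rceil/6\ge k/12$ --- marginally better than the paper's $\lceil k/13\rceil$, with no further case analysis.
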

\begin{proof}
Consider an optimal clique-decomposition \begin{math}cd\end{math} of \begin{math}TW_k\end{math}.
There is an internal node \begin{math}n_b\end{math} of \begin{math}cd\end{math} distinct from the root,
such that the leaves below define at least one third and at most two thirds of the elements of \begin{math}TW_k\end{math}.
Hence the same is true of the leaves that are not below this node,
we have the same inequalities for the cardinal of the two sides of this bipartition.
Let us denote %\begin{math}A \subseteq \Domain(TW_k)\end{math} these elements,
\begin{math}T \subseteq \Domain(TW_k)\end{math} the elements of the trunk/chain,
and \begin{math}W \subseteq \Domain(TW_k)\end{math} the woodpeckers.
Let us consider \begin{math}S_T\end{math} one side  of the bipartition with at least one half of \begin{math}T\end{math},
i.e \begin{math}|S_T \cap T| \geq \frac{k}{2}\end{math}.
The other side \begin{math}S_W\end{math} of the bipartition contains at least
\begin{math}\frac{1}{3} \times |\Domain(TW_k)| - \frac{k}{2} = \frac{k^2-k}{6} + \frac{1}{3} - \frac{k}{2}
                                                             = \frac{k^2-4k}{6} + \frac{1}{3}\end{math}
elements of \begin{math}W\end{math}.
%There is at most \begin{math}\lfloor\frac{k}{2}\rfloor\end{math} of the \begin{math}k\end{math} levels of \begin{math}T\end{math}
%that are contained in \begin{math}S_W\end{math}.
The number of elements in \begin{math}S_W \cap W\end{math} which have cover relationship
only with elements in \begin{math}S_W \cap T\end{math} cannot be more than
\begin{math}\frac{(\frac{k}{2} - 1)(\frac{k}{2} - 2)}{2} = \frac{k^2}{8} - \frac{3k}{4} + 1\end{math}.
Thus, there is at least \begin{math}
  \frac{k^2-4k}{6} + \frac{1}{3} - \frac{k^2}{8} + \frac{3k}{4} - 1
  = \frac{k^2}{24} + \frac{k}{12} - \frac{2}{3}
\end{math}
woodpeckers in \begin{math}S_W\end{math} that covers or are covered by trunk elements in \begin{math}S_T\end{math}.

\begin{itemize}
\item If \begin{math}S_W\end{math} is the side of the bipartition below \begin{math}n_b\end{math}.
Since \begin{math}(level_i(x), level_s(x)) \neq (level_i(y), level_s(y))\end{math},
whenever \begin{math}x,y\end{math} are two distinct woodpeckers,
it is clear that if \begin{math}(level_i(x), level_s(x)) \in (S_T \times S_T)\end{math},
for some woodpecker \begin{math}x \in S_W\end{math}, then the label of this woodpecker
at node \begin{math}n_b\end{math} must be distinct of the labels of all other woodpeckers
in \begin{math}S_W\end{math} at node \begin{math}n_b\end{math}.
Thus, we minimize the number of labels by assuming that any woodpecker 
\begin{math}x \in S_W\end{math} has only one cover relationship with \begin{math}S_T\end{math}.
At most \begin{math}\frac{k}{2}\end{math} woodpeckers may have the same cover relationship,
since the other cover relationships with \begin{math}S_W \cap T\end{math} must be distinct.
Hence, there is at least \begin{math}
  \frac{2}{k} \times (\frac{k^2}{24} + \frac{k}{12} - \frac{2}{3}) = \frac{k}{12} + \frac{1}{6} - \frac{4}{3k} 
%                                                                 > \frac{k}{12} + \frac{1}{6} - \frac{4}{9}
%                                                                 = \frac{k}{12} + \frac{3}{18} - \frac{8}{18}
%                                                                 > \frac{k}{12} + \frac{1}{6} - \frac{4}{12}
%                                                                 = \frac{k}{12} + \frac{2}{12} - \frac{4}{12}
%                                                                 > \frac{k}{12} + \frac{1}{6} - \frac{4}{15}
%                                                                 = \frac{k}{12} + \frac{5}{30} - \frac{8}{30}
                                                                 \geq \frac{k}{12} + \frac{1}{6} - \frac{4}{24}
                                                                 = \frac{k}{12}
\end{math} woodpeckers that must have distinct labels, since \begin{math}k \geq 8\end{math}.

\item If \begin{math}S_T\end{math} is the side of the bipartition below \begin{math}n_b\end{math}.
Since \begin{math}(level_i(x), level_s(x)) \neq (level_i(y), level_s(y))\end{math},
whenever \begin{math}x,y\end{math} are two distinct woodpeckers,
it is clear that elements of \begin{math}S_T \cap T\end{math}
must have distinct labels at node \begin{math}n_b\end{math} unless no woodpecker
in \begin{math}S_W\end{math} has a cover relationship with them,
or if only one woodpecker has a cover relationship with two trunk elements of same label.

Clearly, if two elements in \begin{math}S_W \cap W\end{math} have exclusive cover relationship
with four distinct elements in \begin{math}S_T \cap T\end{math} (2 labels for the four elements),
then we do not increase the number of labels needed by assuming
instead that they both have cover relationship only with the same element in \begin{math}S_T \cap T\end{math},
and that the three remaining elements in \begin{math}S_T \cap T\end{math} have no more
cover relationship with \begin{math}S_W \cap W\end{math} (1 label for the three elements).

Clearly, if one element \begin{math}x\end{math} in \begin{math}S_W \cap W\end{math} has exclusive cover relationship
with two distinct elements in \begin{math}S_T \cap T\end{math},
\begin{itemize}
\item if some element in \begin{math}S_T \cap T\end{math} has no cover relationship
with \begin{math}S_W \cap W\end{math},
then we do not increase the number of labels needed by assuming
instead that \begin{math}x\end{math} has cover relationship only 
with one element in \begin{math}S_T \cap T\end{math},
and that the remaining element in \begin{math}S_T \cap T\end{math} has no more
cover relationship with \begin{math}S_W \cap W\end{math}.

\item if all elements in \begin{math}S_T \cap T\end{math} have cover relationship
with \begin{math}S_W \cap W\end{math}, then since there is at most one woodpecker with exclusive cover relationship,
the number of distinct labels needed is at least \begin{math}\lceil\frac{k}{2}\rceil - 2 + 1\end{math}.
\end{itemize}
Thus, we may assume that no woodpecker in \begin{math}S_W \cap W\end{math}
has exclusive cover relationship with two distinct elements in \begin{math}S_T \cap T\end{math}.

At most \begin{math}\lfloor\frac{k}{2}\rfloor\end{math} woodpeckers may have the same cover relationship,
since the others cover relationships with \begin{math}S_W \cap T\end{math} must be distinct.
Hence, we have that the maximum number of woodpeckers such that the number of labels is two is
\begin{math}\lfloor\frac{k}{2}\rfloor\end{math}
(there is one label for the covering or covered trunk element and one label for all other trunk elements);
the maximum number of woodpeckers such that the number of labels is three is
\begin{math}2 \times \lfloor\frac{k}{2}\rfloor + 1\end{math},
since we can add one extra woodpecker with cover relationship with the two trunk elements
already adjacent to the other woodpeckers, etc.
The maximum number of woodpeckers such that the number of labels is \begin{math}p < \lceil\frac{k}{2}\rceil\end{math}
is \begin{math}(p - 1) \times \lfloor\frac{k}{2}\rfloor + \frac{(p-2)(p-1)}{2}\end{math}.
For \begin{math}p = \lfloor\frac{k}{q}\rfloor\end{math}, we obtain
\begin{math}(\lfloor\frac{k}{q}\rfloor - 1) \times \lfloor\frac{k}{2}\rfloor + \frac{(\lfloor\frac{k}{q}\rfloor-2)(\lfloor\frac{k}{q}\rfloor-1)}{2}
  \leq (\frac{k}{q} - 1) \times \frac{k}{2} + \frac{(\frac{k}{q}-2)(\frac{k}{q}-1)}{2}
  = \frac{k^2}{2q} - \frac{k}{2} + \frac{k^2}{2q^2} - \frac{3k}{2q} + 1
  = \frac{k^2(q+1)}{2q^2} - \frac{k(q+3)}{2q} + 1
\end{math}.
Since there is at least \begin{math}\frac{k^2}{24} + \frac{k}{12} - \frac{2}{3}\end{math} woodpeckers
with cover relationship with \begin{math}S_T \cap T\end{math},
the difference is
\begin{math}
  \frac{k^2}{24} + \frac{k}{12} - \frac{2}{3} - (\frac{k^2(q+1)}{2q^2} - \frac{k(q+3)}{2q} + 1)
  = \frac{k^2(q^2 - 12q - 12)}{24q^2} + \frac{k(q + 6q + 18)}{12q} - \frac{5}{3}
  > \frac{k^2(q^2 - 12q - 12)}{24q^2} + \frac{k(7q + 18)}{12q} - 2
\end{math}.
For \begin{math}q = 13\end{math}, we obtain
\begin{math}\frac{k^2}{24 \times 13 ^2} + \frac{109k}{12 \times 13} - 2\end{math}
which is increasing with \begin{math}k\end{math}, and positive for \begin{math}k \geq 3\end{math}.
\end{itemize}
In both cases, we obtain the sought bound.

\end{proof}

\begin{lemma}
For \begin{math}k \geq 4\end{math}, \begin{math}TW_k\end{math} has questionable-width at least \begin{math}3 \times k - 8\end{math}.
\end{lemma}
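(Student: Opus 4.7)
The plan is to analyze any questionable representation of $TW_k$ of width $W$ at position $0$ of the representation. First, because $t_0<t_1<\dots<t_{k-1}$ is a chain, the first letters $w(t_j)[0]$ must lie along a chain of $O_0$, partitioning the chain into maximal consecutive blocks $B_m=[a_m,b_m]$ with shared letter $\tau_m$, where $\tau_0<\tau_1<\dots<\tau_{r_0}$; this already contributes $r_0+1$ letters to $W$. Next, I would analyze $w(wp)[0]$ for each woodpecker $wp$ with legs $(i,s)$: either $w(wp)[0]=\tau_m$ for some $m$, which forces $a_m\le i+1$ and $s\le b_m+1$ (so the block $B_m$ must contain the entire incomparability range $[i+1,s-1]$ of $wp$), or $w(wp)[0]$ is a new letter $\mu$ whose partial-order profile relative to $\{\tau_m\}$ (greater than an initial segment, incomparable to a contiguous middle segment, less than a final segment --- as forced by transitivity in $O_0$) uniquely determines $(i,s)$. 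Hence $W\ge W_0\ge (r_0+1)+N_0$, where $N_0$ counts the woodpeckers needing a new letter at position $0$.

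Second, I would bound $N_0$ from below in terms of the block sizes $k_m=b_m-a_m+1$ by counting the woodpecker incomparability intervals $[i+1,s-1]\subseteq[1,k-2]$ that are not contained in any single block. Within each block $B_m$, at position $\ge 1$, the induced sub-problem on $B_m$ together with the absorbed woodpeckers is structurally a $TW$-type order extended by up to two ``virtual'' boundary chain elements (playing the roles of $t_{a_m-1}$ and $t_{b_m+1}$), permitting an inductive recursion on block size. The base case $k=4$ follows from the observation that $TW_4$ contains an induced copy of $O_{obs2}$, together with a direct case analysis over the $3$-element partial orders showing that $O_{obs2}$ has questionable-width exactly $4=3\cdot 4-8$.

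The main obstacle will be carrying out the optimization over block partitions so that the resulting recursion yields precisely the linear bound $3k-8$. The two extremes are easy --- the trivial one-block partition pushes everything to later positions, where the same argument applies inductively, and the all-singletons partition gives $W_0=\Theta(k^2)$ at once --- so the real difficulty is in intermediate regimes, balancing the position-$0$ contribution $(r_0+1)+N_0$ against the recursive width demand inside each $B_m$ while correctly handling the boundary woodpeckers that straddle two blocks and appear in a sub-problem but not in its block. I expect the tight partition to be ``one large block with a few singletons at specific positions,'' producing the constant $3$ in the bound.
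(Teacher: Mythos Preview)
Your approach is genuinely different from the paper's, and the optimization step you flag as ``the main obstacle'' is exactly what the paper sidesteps. The paper does not work with $TW_k$ directly at all. Instead it passes to a specific induced suborder $P$ on $3k-8$ elements: a chain $t_1,\dots,t_{k-2}$ together with one ``up-element'' $u_j$ covering each $t_j$ for $1\le j\le k-3$ and one ``down-element'' $d_j$ covered by each $t_j$ for $2\le j\le k-2$ (these are the woodpeckers whose other leg was at $t_0$ or $t_{k-1}$). For this $P$ the paper proves a \emph{rigidity} property: in any questionable representation, if any two elements share the same letter at position $0$, then \emph{all} elements do. This is shown by a short case analysis for two adjacent ``groups'' $(t_j,u_j,d_j)$ (the base case is exactly $O_{obs2}$), then an induction on the number of groups using the overlapping-suborder trick. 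Rigidity immediately gives questionable-width $\ge |P|=3k-8$: no block partition, no recursion, no optimization.

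Your plan, by contrast, leaves the hard part open, and there are technical issues beyond the optimization. First, your claim that a ``new'' letter $\mu$'s profile against $\{\tau_m\}$ uniquely determines $(i,s)$ is not correct in general: the profile only pins down which \emph{blocks} lie below/incomparable/above $\mu$, not the exact indices $i,s$ within a block, so several woodpeckers can share $\mu$. Second, the recursive sub-problem on a block $B_m$ is not itself a $TW$-type order --- the absorbed woodpeckers need not realise every pair $(i,s)$ inside $B_m$, and the ``virtual boundary'' elements you invoke do not actually exist in the induced suborder on $\{x:w(x)[0]=\tau_m\}$, so the induction hypothesis does not apply cleanly. Even if these can be patched, you would still face the balancing argument you anticipate, whereas the paper's rigidity lemma removes the need for it entirely.
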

\begin{proof}
We now consider the following orders:
A chain of height \begin{math}k - 2\end{math} together with \begin{math}k - 3\end{math} down-elements
(each element of the chain except for the lowest has cover relationship with exactly one of these down-elements,
it is more than this down-element),
and \begin{math}k - 3\end{math} up-elements
(each element of the chain except for the highest has cover relationship with exactly one of these up-elements,
it is less than this up-element).
It is clear that this order is an induced suborder of the trunks with woodpeckers
(remove both extremal elements of the trunk and keep only the woodpeckers that had exactly one leg with the two extremal elements).
These orders have unbounded questionable-width.
Consider the first mapping of the questionable representation when \begin{math}k - 2 = 2\end{math}.
It is easy to see that if the two trunk elements have the same image \begin{math}x\end{math},
then the up-element and the down-element must also have the same image \begin{math}x\end{math}.
Similarly, if the up-element and the down-element have the same image \begin{math}x\end{math},
 then both trunk elements have the same image \begin{math}x\end{math}.
If one trunk element and the corresponding up-element (resp. down-element) have the same image \begin{math}x\end{math},
then the other trunk element must have the same image \begin{math}x\end{math}, and we reuse a previous case.
If one trunk element and an incomparable up-element (resp. down-element) have the same image \begin{math}x\end{math},
then the down-element (resp. up-element) must have the same image \begin{math}x\end{math}, and we reuse the previous case.
Thus the questionable-width is 4. (This order is isomorphic to \begin{math}Zigzag_2\end{math} and \begin{math}O_{obs2}\end{math}.)

Before, we can give a proof by induction, we need to strengthen our hypothesis.
Namely, we must prove that for any order made of \begin{math}i\end{math} groups ``trunk+up+down'',
where cover relationship is only between elements of the same group and between trunk elements,
if two elements are mapped to the same element, then all elements are mapped to the same element.
Above, we proved the case (trunk+up, trunk+down).
It is easy to see that if one adds an up and/or a down element,
then if two elements have the same image:
\begin{itemize}
\item if these elements are part of the previous case, then all elements of the previous case have the same image,
and the additional up/down element must also have the same image;
\item if these elements are the new up and the new down element,
 then both trunk elements must have the same image, and thus all elements have the same image;
\item if these elements are, without loss of generality,
 the new up element and some other previous element,
 then it is easy to check the four subcases and see that at least another previous element must have the same image,
 and, again, all elements have the same image.
\end{itemize}
Thus the induction hypothesis is true for \begin{math}i = 2\end{math}.

Now by induction assume that for \begin{math}k - 2 < i, i \geq 3\end{math},
 when two elements have the same image then all elements have the same image.
If \begin{math}k - 2 = i\end{math} and two elements have the same image, then clearly,
there is an induced suborder containing these two elements that is isomorphic to
the case \begin{math}i - 1\end{math}.
(If \begin{math}i \geq 3\end{math}, there is \begin{math}i\end{math} groups ``trunk+up+down'',
 there is at least one group disjoint from the two elements that can be removed.)
But then, since all the elements of the induced suborder isomorphic to
the case \begin{math}i - 1\end{math} must have the same image,
it also implies by considering two such elements in the same group,
that they have the same image than the elements of the group we previously excluded,
since they are all contained in another induced suborder isomorphic to
the case \begin{math}i - 1\end{math}.
(We obtain the bound with \begin{math}4 + 3 \times (i - 2)\end{math}  elements,
in \begin{math}i\end{math} groups (trunk+up, trunk+up+down repeated \begin{math}i  - 2\end{math} times, trunk+down),
since these orders are induced suborders of the orders used in the proof.)
\end{proof}
It shows that there exist orders excluding the induced suborder \begin{math}O_{obs1}\end{math}
with arbitrary high tree/clique/questionable-width.

For edge-weighted graphs, clique-width and tree-width are incomparable.
(See Annexe B in \cite{Lyaudet2007}, for an example/proof that some planar edge-weighted graphs of tree-width 2
have unbounded weighted clique-width.)

However, we can show that when the questionable-width is finite with a questionable representation of finite length,
then the number of labels used in an optimal clique-decomposition is at most the questionable-width.
(Of course, questionable representation of finite width and length implies that the graph is finite
if the questionable representation is strict : any two elements have a question.)
\begin{lemma}
If a finite or countable graph/order has a questionable representation of finite width \begin{math}k\end{math}
and finite length \begin{math}l\end{math},
it has a (highly symetric) clique-decomposition using \begin{math}k\end{math} labels.
If the graph is finite of \begin{math}n\end{math} elements,
the clique-decomposition has depth at most \begin{math}l \times (k(k-1) + \lceil\lg(k-1)\rceil) + \lceil\lg(n)\rceil + (l-1)(k-1)\end{math}.
\end{lemma}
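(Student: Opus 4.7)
The plan is to build the clique-decomposition by induction on the length $l$ of the questionable representation. Identify the underlying order $O'$ of cardinal $k$ with the label set $\{1, \ldots, k\}$. For the given representation, partition the vertices according to the first letter of their word: let \begin{math}G_i = \{x : w(x)[0] = i\}\end{math}. By the first-difference principle, any pair belonging to different groups $G_i, G_j$ has its question at position $0$, so the comparability between such a pair is entirely determined by the pair $(i,j)$ in $O'$. Inside a group $G_i$, every pair either has no question or has its question at some position $\geq 1$, and hence the induced substructure on $G_i$ admits a questionable representation of length $l-1$ over the same alphabet $O'$. This is the recursive hook.

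The construction proceeds as follows. First I would recursively build, for each non-empty $G_i$, a clique-decomposition using the $k$ labels, and arrange that the subcall ends with every vertex of $G_i$ carrying the common label $i$; this is done by at most $k-1$ relabel operations applied at the top of the subcall and contributes the additive $(l-1)(k-1)$ term across all recursion levels. Next, take the disjoint union of the up to $k$ sublabeled pieces by a balanced binary tree of depth $\lceil\lg(k-1)\rceil$ (empty groups are skipped, so at most $k-1$ unions are needed if one subresult already carries the ``large'' part). Finally, insert the between-group relationships by applying, for every ordered pair $(i,j)$ of distinct labels with $i,j$ comparable in $O'$, the edge-adding operation $\eta_{i,j}$ that installs exactly the relation prescribed by $O'$; at most $k(k-1)$ such operations suffice and may be chained. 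Thus each of the $l$ levels of the recursion consumes depth at most $k(k-1)+\lceil\lg(k-1)\rceil$ plus the relabelings already accounted for.

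For the base case, elements reaching the bottom of the word-tree are merged by a balanced binary disjoint union of depth $\lceil\lg n\rceil$; when the representation is strict this collapses to singleton leaves, but in the non-strict case vertices with a common complete word are indistinguishable, and this residual merging accounts for the $\lceil\lg n\rceil$ term. Summing the per-level cost across the $l$ levels with the base-case merge and the accumulated relabelings gives precisely the bound $l(k(k-1)+\lceil\lg(k-1)\rceil)+\lceil\lg n\rceil+(l-1)(k-1)$. The ``highly symmetric'' qualifier is automatic: the same template is applied at every recursive call, and the tree of operations mirrors the prefix-tree of the questionable words.

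The main obstacle is the label bookkeeping under the hard constraint that only $k$ labels are available globally. One must guarantee that immediately before combining the $k$ sub-decompositions, each subresult really does present all its vertices under a single, distinct canonical label (so that the $\eta_{i,j}$ between groups hit only the intended across-group pairs and not any stragglers within a group); this is what forces the $(k-1)$ relabelings per recursive level and forbids using spare labels for scratch work. A secondary subtlety is that $O'$ may be a partial order, so the $\eta$ operations are applied only for comparable pairs $(i,j)$ in $O'$, while incomparable pairs correctly translate to incomparable pairs in $O$ exactly because the first-difference principle leaves them unordered. These two points together are what make the induction go through and yield the stated depth bound.
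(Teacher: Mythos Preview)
Your proposal is correct and follows essentially the same approach as the paper: partition by the first letter, recurse on the tails (length $l-1$), relabel each piece to a single canonical label, take a balanced disjoint union of the at most $k$ pieces, and then apply the at most $k(k-1)$ edge-adding operations dictated by $O'$; the base-case $\lceil\lg n\rceil$ merge handles the residual classes when words coincide. Your accounting of the three contributions to the depth bound matches the paper's, and your explanation of why only $k$ labels suffice (the relabel-before-combine discipline) is, if anything, more explicit than the paper's sketch.
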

\begin{proof}
  Consider such a questionable representation from left to right,
  the corresponding clique-decomposition will have its root on the left.
  For any index from 0 to the length of the questionable representation,
  associate injectively, to each element of the graph of size at most \begin{math}k\end{math} at this index,
  a label between 1 and \begin{math}k\end{math}.
  We look at the questions on index 0, if these questions add edges,
  we can have at most \begin{math}k(k-1)\end{math} operations of adding edges
  (this is one of the points where we need that the width is finitely bounded to avoid an infinite path in the syntactic tree of the term,
  such an infinite path would not allow to see actual elements of the graph after some bounded depth
  as is required by clique-width for countable graphs).
  Below these edge-adding operations is a tree of at most \begin{math}k-1\end{math} disjoint sums
  (of depth \begin{math}\lg(k-1)\end{math} if the tree is balanced),
  splitting the graph in the induced subgraphs,
  corresponding each to the vertices mapped to the same element
  (there is at most \begin{math}k\end{math} such disjoint induced subgraphs).
  Below each disjoint sum, either there is two other disjoint sums,
  or there is at least one path of \begin{math}k-1\end{math} consecutive renaming operations to give
  to all elements of an induced subgraph the same label.
  At each step, the induced subgraphs are further partitioned into smaller induced subgraphs.
  Whenever an induced subgraph is a singleton, we add a vertex adding node, otherwise we repeat the construction.
  If after all steps, there are still induced subgraphs of cardinality more than one,
  for each of them, we had a tree of disjoint sums for internal nodes and vertex creation for leaves.
  Since the length is bounded, we obtain the sought clique-decomposition.
\end{proof}

We could/should have wrote this proof by induction on the length of the questionable representation,
starting from right to left, from the vertex leaves and gluing together until we have the root of the global clique-decomposition.
However the proof we gave emphasizes the fact that we stop/give concrete vertices,
once the partitioning process generates singletons (or the length is bounded).
This cannot be achieved for infinite countable (resp. uncountable) graphs
when the questionable-width is finite (resp. countable).
Dealing with infinite structures using the first difference principle
is conceptually simpler than terms and clique-decompositions.

These results of (in)comparability suggest that we may define a width of binary structures
(structures with relations or functions of arity 2)
that generalizes tree/clique/questionable-width.
Consider a binary signature \begin{math}\mathcal{S}\end{math} of binary relations and functions.
Given a set \begin{math}S\end{math},
an \emph{\begin{math}(\mathcal{S},S,k)\end{math}-mapping-run} is an (ordinal-indexed) sequence of mappings
from \begin{math}S\end{math} to \begin{math}\mathcal{S}\end{math}-structures of cardinality at most \begin{math}k\end{math}.
\begin{definition}
Let \begin{math}X\end{math} be an \begin{math}\mathcal{S}\end{math}-structure.
A \emph{\begin{math}(k,\alpha,\beta)\end{math}-tree questionable decomposition} of \begin{math}X\end{math} is
a triple \begin{math}(T, ll, nl)\end{math}
such that:
\begin{itemize}
\item \begin{math}T\end{math} is a rooted tree.
      There are many definitions that generalize rooted trees in the infinite setting,
      here is the one that fits well.
      An ``infinite rooted tree'' is a well-founded order where the minimal elements/nodes correspond to the leaves,
      such that there is a maximum element/node (corresponding to the root),
      and for any two nodes \begin{math}n,n'\end{math}, the initial sections\footnote{
          Recall that an initial section, also called an ideal or a down-set, is a subset of the domain of an order \begin{math}I \subseteq \Domain(O)\end{math}
          (or the corresponding induced suborder),
          such that \begin{math}\forall x \in I, \forall y \in \Domain(O), y < x \text{ implies } y \in I\end{math}.
          (In a total order, an initial section is called an initial segment.)
          Recall that a final section, also called a filter or an up-set, is a subset of the domain of an order \begin{math}F \subseteq \Domain(O)\end{math}
          (or the corresponding induced suborder),
          such that \begin{math}\forall x \in F, \forall y \in \Domain(O), y > x \text{ implies } y \in F\end{math}.
          (In a total order, a final section is called a final segment.)
          (It should be pretty easy to remember that ideal goes with initial and filter goes with final.)
        }
      generated by \begin{math}n,n'\end{math}
       (sets of elements less than \begin{math}n\end{math}, resp. less than \begin{math}n'\end{math})
      are either disjoint (disjoint subtrees), or one is contained in the other
       (indicating that \begin{math}n\end{math} is an ancestor or a descendant of \begin{math}n'\end{math}).
      Note that for any node/leaf, the set of nodes greater than it forms a chain/path with a maximum element corresponding to the root.
      The ``inner nodes'' are thus the non-minimal elements of this order.
\item leaves are mapped surjectively to elements of \begin{math}X\end{math}
      (at least one leaf for each element) by function \begin{math}ll\end{math},
      if exactly one leaf is mapped to any element,
      then the \begin{math}(k,\alpha,\beta)\end{math}-tree questionable decomposition
      is said to be bijective,
\item thus to each internal node \begin{math}node\end{math} is associated the set of elements of \begin{math}X\end{math}
      corresponding to elements \begin{math}ll(l)\end{math} for any leaf \begin{math}l\end{math} below \begin{math}node\end{math},
      defining \begin{math}ll(node)\end{math},
\item \begin{math}nl\end{math} is a mapping from internal nodes, such that \begin{math}nl(node)\end{math} 
      is a \begin{math}(\mathcal{S},ll(node),k)\end{math}-mapping-run,
\item hence, to each element corresponds a subtree of \begin{math}T\end{math},
      and since the intersection of two trees is a tree, we also have a tree corresponding to a couple of elements \begin{math}(x,y)\end{math}.
      For each path of this tree directed from leaves to the root (but this tree may not contain leaves),
      we can define the \begin{math}(\mathcal{S},\{x,y\},k)\end{math}-mapping-run obtained by concatenating
      the \begin{math}(\mathcal{S},ll(node),k)\end{math}-mapping-runs restricted to \begin{math}\{x,y\}\end{math},
      we impose that this mapping-run is a questionable representation of \begin{math}X\end{math} restricted to 
      \begin{math}\{x,y\}\end{math},
      thus the corresponding words associated to \begin{math}x,y\end{math} have a question respecting the structure \begin{math}X\end{math}.
      If \begin{math}ll\end{math} is not bijective, then it entails that all such questions yield the same ``adjacency type''.
\item \begin{math}\alpha\end{math} is the depth of the tree \begin{math}T\end{math}, \emph{i.e.} the supremum of all ordinals contained in the rooted tree.
      Thus for questionable representations, \begin{math}\alpha = 2\end{math}.
\item \begin{math}\beta\end{math} is the depth of the expanded tree \begin{math}T'\end{math} obtained by replacing each internal node with a chain of nodes
       (one for each mapping in the mapping run corresponding to the original node), \emph{i.e.} the supremum of all ordinals obtained by adding the ordinals
      corresponding to the mapping-runs on a path.

\end{itemize}
\begin{math}k\end{math} is called the width of the decomposition;
\begin{math}\alpha\end{math} is called the structural depth of the decomposition.
\begin{math}\beta\end{math} is called the logical depth of the decomposition.
\end{definition}

For finite structures with \begin{math}n \geq 2\end{math} elements,
 \begin{math}k \leq n, \alpha \leq n\end{math}, and \begin{math}\beta \leq n^2\end{math}.

We say that a decomposition is linear if each internal node has exactly two sons,
and for all internal node at least one son is a leaf.
A ``linear infinite rooted tree'' is such an order with the constraints that
all inner nodes are ancestor/descendant related,
and such that there is a ``lowest inner node'' that generates an initial section made exactly of two leaves and itself,
and all other inner nodes generates an initial section that is the union of the initial sections generated by inner nodes below it,
together with the disjoint union of one more leaf.

\begin{lemma}
Any binary structure with at least two elements has a bijective linear \begin{math}(2,\alpha, \leq \beta)\end{math}-tree questionable decomposition,
where \begin{math}\alpha\end{math} is the first ordinal of same cardinal than \begin{math}X\end{math},
and \begin{math}\beta\end{math} is an ordinal of same cardinal than \begin{math}X^2\end{math}.
\end{lemma}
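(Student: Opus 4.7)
The plan is to fix a bijection $f : \alpha \to \Domain(X)$, with $\alpha$ the first ordinal of cardinality $|X|$, and write $x_i = f(i)$. Build the linear rooted tree $T$ whose leaves are $(l_i)_{i < \alpha}$, mapped bijectively to $x_i$ by $ll$, and whose inner nodes form a chain $(n_i)_{0 < i < \alpha}$ with $ll(n_i) = \{x_k : k \leq i\}$; the lowest inner node is $n_1$ with sons $\{l_0, l_1\}$, and each later $n_i$ sits above all the $(n_j)_{0 < j < i}$ with $l_i$ adjoined as its second son. At each inner node $n_i$ place a mapping-run $nl(n_i)$ of ordinal length $i$: at position $j < i$, the mapping $f_{i,j}$ goes to a two-element $\mathcal{S}$-structure $S_{i,j}$ on $\{a,b\}$ whose adjacency type between $a$ and $b$ exactly matches the $\mathcal{S}$-structure induced on $\{x_j, x_i\}$ in $X$, and is defined by $f_{i,j}(x_k) = a$ for $k \leq j$ and $f_{i,j}(x_k) = b$ for $j < k \leq i$.

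The first thing to verify is that every pair yields a valid questionable representation. For $j < i$, the lowest common ancestor of $l_j$ and $l_i$ is $n_i$, so the concatenation from leaves to root begins with $nl(n_i)$. At positions $j' < j$ of $nl(n_i)$, both $x_j$ and $x_i$ have index strictly greater than $j'$, hence both map to $b$, producing no question. At position $j$ itself, $x_j \mapsto a$ while $x_i \mapsto b$, which is the first difference; and $S_{i,j}$ was chosen precisely so that this question records the correct adjacency type of $\{x_j, x_i\}$ in $X$. Later positions in $nl(n_i)$ and mapping-runs at higher inner nodes are irrelevant, since only the first difference matters.

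The accounting then gives the parameters. The width is $2$ by construction. The structural depth of $T$ equals $\alpha$, since the inner nodes form a chain of order type $\alpha$. The logical depth of the expanded tree is at most the ordinal sum $\sum_{0 < i < \alpha} i$, which has cardinality $|\alpha|^2 = |X^2|$ in the infinite case and is at most $n^2$ when $X$ is finite; so any ordinal $\beta$ of cardinality $|X^2|$, for instance $\alpha \cdot \alpha$, bounds the logical depth, and we obtain a $(2, \alpha, \leq \beta)$-tree questionable decomposition as required.

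The main subtlety to dispatch will be the treatment of limit ordinals $i < \alpha$, where $n_i$ has no immediate inner-node predecessor in the chain and the ``two sons'' wording of the definition of a linear tree must be read as allowing $n_i$ to sit directly above the union of the subtrees of the $n_j$, $j < i$ (playing the role of the ``first son''), with $l_i$ as the ``second son''. The paper's own phrasing that an inner node's initial section is ``the union of the initial sections generated by inner nodes below it, together with the disjoint union of one more leaf'' is compatible with this reading, and with it the construction and the verification above go through uniformly across successor and limit stages, because both $nl(n_i)$ and the adjacency-type argument depend only on the well-ordered indices, not on a concrete cover relation. A minor background point, immediate from the definitions, is that every adjacency type between two elements of an $\mathcal{S}$-structure (with $\mathcal{S}$ binary) is realized on some two-element $\mathcal{S}$-structure, which is what legitimizes the choice of each $S_{i,j}$.
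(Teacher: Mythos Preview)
The paper states this lemma without proof, so there is no argument to compare against; your construction is exactly the natural one the statement invites, and it is correct. Enumerating the elements by an initial ordinal, hanging one new leaf at each inner node of a caterpillar, and using the mapping-run at $n_i$ to record, at position $j$, the adjacency type of $(x_j,x_i)$ via the threshold map $x_k\mapsto a$ for $k\le j$ and $x_k\mapsto b$ otherwise, gives the required first-difference behaviour for every pair, as you verify.

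One small point worth tightening: when $\alpha$ is an infinite initial ordinal it is a limit ordinal, so your chain of inner nodes $(n_i)_{0<i<\alpha}$ has no maximum and hence no root, while the paper's definition of an infinite rooted tree explicitly demands a maximum node. You can repair this by adjoining a root above all the $n_i$ with an empty mapping-run; the structural depth then becomes $\alpha+1$ rather than $\alpha$, but this has the same cardinality as $\alpha$, and the paper's own phrasing (``the first ordinal of same cardinal than $X$'') is arguably already loose on this point. Your discussion of limit stages \emph{inside} $\alpha$ is fine and matches the paper's description of linear infinite rooted trees.
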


Thus we see that tree-questionable-width is too powerful.
Usually, linear tree-width a.k.a path-width and linear clique-width are strictly less powerful than tree-width, resp. clique-width.

In order to limit this but still obtain a width more powerful than tree-width and clique-width for finite structures,
we use well-known balancing results.
If a finite (weighted)-graph/binary-structure has tree-width \begin{math}k\end{math},
it has a tree-decomposition of width \begin{math}3k-1\end{math} and depth \begin{math}3\lg(n)\end{math}
where nodes have at most two sons
(see \cite{DBLP:conf/wg/Bodlaender88}).
If a finite (weighted)-graph/binary-structure has clique-width \begin{math}k\end{math},
it has a clique-decomposition of width \begin{math}k \times 2^k\end{math} and depth \begin{math}3\lg(n)\end{math}
(see \cite{DBLP:journals/dam/CourcelleV03}).

\begin{lemma}
If a finite binary structure has a tree-decomposition of width \begin{math}k\end{math}
and depth \begin{math}d\end{math},
it has a \begin{math}(k+2,d+1,d)\end{math}-tree questionable decomposition.
\end{lemma}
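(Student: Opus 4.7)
The plan is to build the tree $T$ of the questionable decomposition from the rooted tree-decomposition $T_{td}$ (of depth $d$) by keeping the bags as internal nodes and attaching, beneath $T_{td}$, leaves for the vertices of $X$, so that $T$ has structural depth $d+1$. For each vertex $v$, let $T_v = \{b : v \in B(b)\}$ denote the connected subtree of bags containing $v$, and hang one or more leaves $\ell_v$ below bags inside $T_v$. At each internal node $b$, assign a single length-$1$ mapping $f_b$ whose target is the $(k+2)$-element set $B(b) \cup \{*\}$, defined by $f_b(\ell_v) = v$ when $v \in B(b)$ and $f_b(\ell_v) = *$ otherwise; equip $B(b) \cup \{*\}$ with the restriction of the binary structure of $X$ to $B(b)$, with $*$ in no relation with any element. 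Width is then at most $(k+1)+1 = k+2$, structural depth is $d+1$ (the $d$ levels of $T_{td}$ plus the leaf level), and logical depth is $d$ since every internal node contributes exactly one mapping.

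The correctness argument proceeds pair by pair. Given $u, v \in V(X)$ and a pair of their leaves, I look at the concatenated mapping-run along the path from $\mathrm{lca}_T(\ell_u,\ell_v)$ up to the root and pick out the first difference. If $u, v$ are related in $X$ there is a common bag $b^* \in T_u \cap T_v$, and one uses the connectivity of $T_u, T_v$ in $T_{td}$ to show that the lowest common ancestor lies inside $T_u \cap T_v$: both vertices then belong to that bag, so the first question is $(u, v)$ and the $B(b)$-structure reports the correct relation. Non-adjacency is automatic, since either both vertices are mapped to $*$ (producing no question) or they are mapped to two elements of $B(b) \cup \{*\}$ that carry no relation, and either outcome is faithfully reported by the rule that $*$ is never related to anything.

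The hard part is handling pairs of leaves whose common ancestor falls above $T_u \cap T_v$: naively placing a single leaf at $\mathrm{top}(v)$ makes the first question $(u, *)$ when, say, $\mathrm{top}(u)$ is strictly above $\mathrm{top}(v)$, and no choice of structure on $B(\mathrm{top}(u)) \cup \{*\}$ can simultaneously encode "$u$ adjacent to some $*$-vertex" and "$u$ not adjacent to some other $*$-vertex". I expect to resolve this by dropping bijectivity and attaching, for each vertex $v$, leaves below each bag in a suitably chosen subset of $T_v$ (morally, below every bag in $T_v \cap T_u$ for each $u$ sharing a bag with $v$), so that every pair of leaves of $u$ and $v$ has its lowest common ancestor inside $T_u \cap T_v$, where the question is $(u, v)$ and is resolved correctly by the intrinsic $B(b)$-structure. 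Checking that such a placement exists and that every pair of leaves for the same two vertices produces the same adjacency type — which is what the definition demands of non-bijective decompositions — will be the crux of the verification.
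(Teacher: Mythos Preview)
Your construction is exactly the paper's: keep the bags as internal nodes, hang leaves for the vertices underneath, and at each bag $b$ use the single mapping sending $v\mapsto v$ if $v\in B(b)$ and $v\mapsto *$ otherwise, with $*$ carrying the default relation. The paper phrases the leaf step as ``add leaves so that every bag $B(b)$ is contained in $ll(b)$''; concretely, one leaf for $v$ under every bag of $T_v$ achieves this, and that is precisely your ``drop bijectivity'' fix.

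Where you go astray is in the correctness criterion. You phrase the requirement as ``every pair of leaves $\ell_u,\ell_v$ must have its LCA inside $T_u\cap T_v$'', and then call this the crux. That condition is neither what the definition asks nor achievable: if $b_u\in T_u$ lies strictly above $\mathrm{top}(T_u\cap T_v)$ and $b_v\in T_v$ lies below, the LCA of the corresponding leaves is $b_u\notin T_u\cap T_v$. The definition instead concerns the \emph{intersection subtree} $\{\,n : u\in ll(n)\text{ and }v\in ll(n)\,\}$ and the paths from its \emph{minimal} nodes to the root. With a leaf for $v$ under every bag of $T_v$, this intersection subtree is $(T_u\cap T_v)\cup\mathrm{anc}(\mathrm{top}(T_u\cap T_v))$ when $T_u\cap T_v\neq\emptyset$, so every minimal node lies in $T_u\cap T_v$ and the first question is $(u,v)$---correct. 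When $T_u\cap T_v=\emptyset$ the minimal node is either in $T_u$, in $T_v$, or in neither; the question is then $(u,*)$, $(*,v)$, or absent, each of which reports the default relation, again correct. So there is no ``hard part'': once you read the definition as written, the verification is the two-line case split above, not a delicate leaf-placement argument.
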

\begin{proof}
  Consider such a rooted tree decomposition,
  we may easily add to it new leaves associated with elements such that any ``node-bag'' of the tree-decomposition
  is contained in the set of elements corresponding to leaves below it (leaves of the tree-decomposition become internal nodes).
  Then it is trivial to see that the ``adjacency-type'' between two elements is the same in all the bags of the internal nodes,
  and thus to each internal node we associate a mapping run of one mapping corresponding to the substructure of the bag
  with another element added such that this element \begin{math}g\end{math} has the ``default adjacency type'' (no adjacency for graphs)
  with all other elements. All elements of the bag are mapped to themselves, and the other elements in \begin{math}ll(node)\end{math}
  are mapped to \begin{math}g\end{math}.
\end{proof}

\begin{lemma}
If a finite binary structure has a compact clique-decomposition of width \begin{math}k\end{math}
and depth \begin{math}d\end{math},
it has a bijective \begin{math}(2k,d,d-1)\end{math}-tree questionable decomposition.
\end{lemma}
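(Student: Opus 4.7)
The plan is to use the syntactic tree of the clique-decomposition directly as the tree $T$ of the tree-questionable decomposition, and to record at each disjoint-union node the combination ``$k$-label plus side (left/right child)'' as the mapping, using a $2k$-element structure that tabulates the relation/function values imposed on the pair by the renamings and binary-relation insertions performed on the path from this node to the root.

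First I would exploit the compactness of the clique-decomposition to assume that its syntactic tree has only two kinds of nodes: leaves, each carrying one vertex of $X$ with an initial label in $\{1,\dots,k\}$, and binary internal nodes corresponding to the disjoint-union operation, each decorated with a finite sequence of renamings and binary-relation insertions applied immediately after the disjoint union. This tree has depth $d$. I take $T$ to be this tree and $ll$ to be the bijection between leaves and vertices of $X$; the tree-questionable decomposition will thus be bijective and the structural depth $\alpha$ will equal $d$.

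At each internal node $n$, let $L$ and $R$ denote its two children subtrees and, for $v \in ll(n)$, let $\ell_n(v) \in \{1,\dots,k\}$ be the label carried by $v$ immediately before the disjoint union at $n$ (well-defined since renamings act only on labels) and $\sigma_n(v) \in \{L,R\}$ the subtree that contains $v$. Set
\[
\phi_n : ll(n) \longrightarrow \{1,\dots,k\} \times \{L,R\}, \qquad \phi_n(v) = (\ell_n(v), \sigma_n(v)),
\]
a codomain of cardinality $2k$. Equip this codomain with the binary $\mathcal{S}$-structure $Y_n$ in which the values of the relations and functions between $(i,L)$ and $(j,R)$ are those that $X$ eventually assigns to any pair $x \in L, y \in R$ with $\ell_n(x) = i$ and $\ell_n(y) = j$; pairs of distinct same-side elements are given an arbitrary default value. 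The construction is sound because once $x$ and $y$ are merged by the disjoint union at $n$, every subsequent operation depends only on the current labels of $x$ and $y$, and those labels evolve deterministically from $i$ and $j$. I let $nl(n)$ be the mapping run of length one consisting of $\phi_n$ alone, so the logical depth $\beta$ equals the number of internal nodes on any root-to-leaf path, namely $d-1$.

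It remains to verify that, for every pair $x \neq y \in X$, the concatenated mapping run from leaves to root along the path from their lowest common ancestor $m$ upwards is a questionable representation of $X$ restricted to $\{x,y\}$. The crucial point is that at $m$ itself the leaves $ll^{-1}(x)$ and $ll^{-1}(y)$ lie in different children subtrees of $m$, hence $\sigma_m(x) \neq \sigma_m(y)$ and $\phi_m(x) \neq \phi_m(y)$; a question therefore appears at the very bottom of the concatenated run, and by construction of $Y_m$ it delivers precisely the relations and functions that $X$ puts between $x$ and $y$. All later mappings $\phi_n$ for $n$ strictly above $m$ are irrelevant by the first-difference principle, which is exactly why assigning arbitrary defaults to same-side pairs is harmless. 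The main obstacle I expect is the careful bookkeeping needed to justify that the cumulative effect of the renamings and binary-relation insertions above $n$ is indeed a function of $(i,j)$ and $L/R$-membership alone; this is a short induction up the path from $n$ and hides no deep difficulty.
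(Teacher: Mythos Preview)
Your proposal is correct and follows exactly the approach sketched in the paper: take the clique-decomposition tree as $T$, map each vertex to the pair (current label, left/right side) at each disjoint-union node, and observe that the question between any two vertices occurs precisely at their lowest common ancestor where the side coordinate already separates them. Your write-up is simply a more detailed and careful version of the paper's two-sentence proof.
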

\begin{proof}
  This is really trivial, since all adjacencies are added just after disjoint sum of graphs.
  And thus, at each node we only need one mapping in the mapping-run with 
  \begin{math}k\end{math} elements for the values of the ``left son'' elements by the mapping
  and \begin{math}k\end{math} elements for the values of the ``right son'' elements by the mapping.
\end{proof}

If the proofs of the two previous lemmas were unclear to you, again we strongly suggest reading the book by \cite{DBLP:books/daglib/0030804}.

We draw tree-questionable-decompositions with the leaves on the left, the root on the right,
and all mapping-runs from left to right.
With the previous lemma, the clique decomposition had thus its root on the right.
One may be surprised by the fact that the root was on the left for the lemma giving a clique-decomposition from a questionable representation,
where the mapping-run was also from left to right.

We consider that a tree-questionable-decomposition is balanced 
if its structural depth is at most logarithmic in the size of the graph/structure decomposed.
(It makes sense for a class of graphs/binary structures
where we can say that all graphs in this class have a 
tree-questionable-decomposition of width less than \begin{math}k\end{math}, for some fixed \begin{math}k\end{math},
and structural depth less than some fixed function in \begin{math}O(\log(n))\end{math}.)

We know that a class of graphs has
decidable monadic second-order logic:
\begin{itemize}
\item with edge set quantifications only if it
has bounded tree-width (See \cite{DBLP:journals/apal/Seese91});
\item without edge set quantification but with even cardinality predicates only if it
has bounded clique-width (See \cite{DBLP:journals/jct/CourcelleO07}).\footnote{In both cases, it is ``if and only if''
  when considering certain ``regular'' classes of graphs defined with HR or VR grammars.}
\end{itemize}
The same question is still open for first-order logic.

\begin{openproblem}
Do classes of graphs have decidable first-order theory only if
they have bounded balanced tree-questionable-width ?
Do the class of all graphs with bounded balanced \begin{math}(k,f(n),n^2)\end{math}-tree-questionable-width
for some computable function \begin{math}f \in O(\log(n))\end{math} have decidable first-order theory ?
\end{openproblem}

The same open problem can be wrote for monadic second-order logic without edges sets quantification
and (even) cardinality predicates.
However, bounded balanced tree-questionable-width is not the good measure for this problem,
since grids have undecidable monadic second-order logic,
and we shall see soon that grids have bounded bijective balanced tree-questionable-width.

The following lemma proves that bijective balanced tree-questionable-width can be more powerful than tree-width and clique-width.

\begin{lemma}
\begin{math}p \times q\end{math}-grids have a bijective
\begin{math}(3, \leq \lceil\log_{\frac{5}{3}}(p)\rceil + \lceil\log_{\frac{3}{2}}(q)\rceil, \leq 2 \times \lceil\log_{\frac{5}{3}}(p)\rceil + (\lceil\lg(p)\rceil + 1) \times \lceil\log_{\frac{3}{2}}(q)\rceil)\end{math}-tree questionable decomposition.
\end{lemma}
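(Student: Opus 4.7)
The plan is a two-phase recursive decomposition along the two axes of the grid. In Phase 1 we perform $q$-direction splits until every surviving subgrid is a single column; in Phase 2 we perform $p$-direction splits on each such single-column path $P_p$. All internal nodes use width-$3$ auxiliary structures; a convenient choice is $S=\{0,1,s\}$ whose only edge is $0\!-\!1$ and in which $s$ is a silent label not adjacent to anything.

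At a Phase 1 column-split node of a current $p\times q'$ subgrid we cut between columns $q_L$ and $q_L+1$ so that both sides contain at most $\lceil 2q'/3\rceil$ columns, shrinking $q'$ by a factor at least $3/2$ per level. The mapping-run here has length $\lceil\lg p\rceil+1$: the first $\lceil\lg p\rceil$ mappings encode the row index of each cell bit by bit with a $\{0,s\}$ no-edge structure (so that cells in different rows receive a non-adjacent first difference at some row-bit), and the last mapping is a boundary mapping that sends $(r,q_L)\mapsto 0$, $(r,q_L+1)\mapsto 1$ and all other cells to $s$, so that the unique cross-cut edge for each row appears as an adjacent $(0,1)$ first difference. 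At a Phase 2 row-split node on a single column we cut between rows $r$ and $r+1$ with both halves of size at most $\lceil 3p/5\rceil$, using a $2$-mapping run: a boundary mapping (row $r\mapsto 0$, row $r+1\mapsto 1$, others $s$) and a top/bottom side mapping that supplies a non-adjacent first difference to the remaining pairs.

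Correctness follows from the first-difference principle, applied pair by pair: for cells $x,y$ we take their lowest common ancestor in the decomposition tree, restrict each ancestor's mapping-run to $\{x,y\}$, and verify that the concatenated run from the LCA upward is a questionable representation of the induced subgraph on $\{x,y\}$. The argument is inductive on the tree: pairs whose LCA is a strict descendant of the current node have their first difference produced there, and the mappings at higher nodes cannot disturb it since only the first difference counts. The case analysis at a column-split LCA distinguishes (a) different rows, where some row-bit produces a non-adjacent first difference; (b) same row with both cells on the cut columns, where the boundary mapping produces an adjacent $(0,1)$; and (c) same row with exactly one cell on the cut, where the boundary mapping produces $(0,s)$ or $(s,1)$, non-adjacent. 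The row-split case is analogous. Summing the per-level mapping counts then yields the stated structural depth $\lceil\log_{5/3}p\rceil+\lceil\log_{3/2}q\rceil$ and logical depth $2\lceil\log_{5/3}p\rceil+(\lceil\lg p\rceil+1)\lceil\log_{3/2}q\rceil$.

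The main subtlety is the case of a same-row pair $x=(i,j)$, $y=(i,j')$ with $j<q_L$ and $j'>q_L+1$ at a column-split LCA: here the row bits coincide and the boundary mapping sends both cells to $s$, so a naïve version of the run produces no first difference between $x$ and $y$ at this node and, in the worst case, never distinguishes them, which would break injectivity of the representation. Keeping the per-level mapping count at exactly $\lceil\lg p\rceil+1$ requires piggybacking a Left/Right side indicator onto the last row-bit mapping or onto the boundary mapping itself so that such interior cross-cut pairs acquire a non-adjacent first difference without disturbing the correct adjacent behaviour of the genuine cross-cut pair and without introducing spurious adjacencies for different-row boundary pairs. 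This amounts to a finite combinatorial check on the six ordered label pairs of $\{0,1,s\}$, and it is where the economy of the encoding really has to be paid for; everything else in the proof is bookkeeping in the first-difference framework already used throughout the paper.
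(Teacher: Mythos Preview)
Your approach is essentially the same as the paper's: a two-phase recursive decomposition that first halves along the $q$-direction (column splits) and then, once each piece is a single column, halves along the $p$-direction as a path. At a column-split node the paper also uses $\lceil\lg p\rceil$ mappings to isolated image-vertices that pair up ladder rungs according to the bits of their row index, followed by one final mapping to two adjacent image-vertices separating left from right; at a path-split node the paper also uses a two-mapping run (boundary, then side). The structural and logical depth bounds are derived in the same way from the halving ratios.

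You are in fact more explicit than the paper on one point. The paper says only that the first $\lceil\lg p\rceil$ mappings ``define the non-adjacency of all vertices'' and that the last mapping gives ``distinct images to previously paired elements,'' without spelling out what happens to a same-row cross-cut pair that lies strictly interior to both halves (your case $j<q_L$, $j'>q_L+1$). Your observation that the naive run gives such a pair no first difference is correct, and the piggybacking fix you sketch --- using the third label in one of the row-bit mappings to tag, say, right-side non-boundary vertices --- does work and is precisely where the width~$3$ must be spent in those mappings. So your identification of this subtlety, and your indicated resolution, sharpen what the paper leaves implicit; otherwise the two proofs coincide.
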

\begin{proof}
(We do not repeat in this proof that all tree-questionable decompositions are bijective.)
If the grid is a path, then the proof is by induction on the length of the path.
A path of length 2 has a \begin{math}(2, 2, 1)\end{math}-tree questionable decomposition.
A path of length 3 has a \begin{math}(2, 2, 2)\end{math}-tree questionable decomposition (or \begin{math}(3, 2, 1)\end{math}-tree questionable decomposition).
A \begin{math}(3, \leq \lceil\log_{\frac{5}{3}}(p)\rceil, \leq 2 \times \lceil\log_{\frac{5}{3}}(p)\rceil)\end{math}-tree questionable decomposition
for a path of length \begin{math}p \geq 4\end{math} can be obtained by adding a root node
above the root nodes of the two
\begin{math}(3, \leq \lceil\log_{\frac{5}{3}}(\lceil\frac{p}{2}\rceil)\rceil, \leq 2 \times \lceil\log_{\frac{5}{3}}(\lceil\frac{p}{2}\rceil)\rceil)\end{math}-tree questionable decomposition
and \begin{math}(3, \leq \lceil\log_{\frac{5}{3}}(\lfloor\frac{p}{2}\rfloor)\rceil, \leq 2 \times \lceil\log_{\frac{5}{3}}(\lfloor\frac{p}{2}\rfloor)\rceil)\end{math}-tree questionable decomposition of the two ``half'' subpaths.
This root node contains a mapping run of two mappings:
the first mapping associates the two adjacent vertices of the subpaths to two adjacent vertices,
and all other vertices are mapped to an isolated vertex;
the second mapping associates all vertices of the first subpath to the same vertex
 and all vertices of the second subpath to another vertex, image-vertices are non adjacent.
(Alternatively, one can have a mapping run of only one mapping with image set of cardinal 4,
where the two adjacent vertices of the subpaths are mapped to two adjacent vertices,
and all other vertices are mapped to two isolated vertices, one for each subpath.
Thus, one obtains a \begin{math}(4, \leq \lceil\log_{\frac{5}{3}}(p)\rceil, \leq \lceil\log_{\frac{5}{3}}(p)\rceil)\end{math}-tree questionable decomposition.)

Again by induction, one can obtain a
\begin{math}(3, \leq \lceil\log_{\frac{5}{3}}(p)\rceil + \lceil\log_{\frac{3}{2}}(q)\rceil,
                \leq 2 \times \lceil\log_{\frac{5}{3}}(p)\rceil + (\lceil\lg(p)\rceil + 1) \times \lceil\log_{\frac{3}{2}}(q)\rceil)\end{math}-tree questionable decomposition
for a \begin{math}p \times q\end{math}-grid by adding a root node
above the root nodes of the two
\begin{math}(3, \leq \lceil\log_{\frac{5}{3}}(p)\rceil + \lceil\log_{\frac{3}{2}}(\lceil\frac{q}{2}\rceil)\rceil,
                \leq 2 \times \lceil\log_{\frac{5}{3}}(p)\rceil + (\lceil\lg(p)\rceil + 1) \times \lceil\log_{\frac{3}{2}}(\lceil\frac{q}{2}\rceil)\rceil)\end{math}
and
\begin{math}(3, \leq \lceil\log_{\frac{5}{3}}(p)\rceil + \lceil\log_{\frac{3}{2}}(\lfloor\frac{q}{2}\rfloor)\rceil,
                \leq 2 \times  \lceil\log_{\frac{5}{3}}(p)\rceil + (\lceil\lg(p)\rceil + 1) \times \lceil\log_{\frac{3}{2}}(\lfloor\frac{q}{2}\rfloor)\rceil)\end{math}-tree questionable decomposition of the two ``half'' subgrids.
This root node contains a mapping run of \begin{math}\lceil\lg(p)\rceil + 1\end{math} mappings:
in the first \begin{math}\lceil\lg(p)\rceil\end{math} mappings, there are two isolated image-vertices \begin{math}x_0, x_1\end{math},
and each adjacent pair of vertices of the ``ladder'' are mapped to the same vertex;
either \begin{math}x_0\end{math} or \begin{math}x_1\end{math} in the \begin{math}i\end{math}th mapping,
if the \begin{math}i\end{math}th bit of the binary representation of the level of this edge of the ladder is 0 or 1.
Thus these mappings define the non-adjacency of all vertices,
and the last mapping uses only two adjacent image-vertices,
giving distinct images to previously paired elements.
\end{proof}

This is a funny situation where bijective balanced tree-questionable-width is more powerful than clique-width,
but maybe is incomparable with tree-width (for edge-weighted graphs).
In all cases, balanced tree-questionable-width is more powerful than tree/clique/questionable-width.

\begin{openproblem}
Do classes of (edge-weighted) graphs of bounded balanced tree-questionable-width
have \emph{bijective} bounded balanced tree-questionable-width ?
If not, do edge-weighted graphs of bounded tree-width have bijective bounded balanced tree-questionable-width ?
\end{openproblem}

We believe that the triangulated grids (in each square is added a diagonal edge from bottom-left to top-right, for example)
have unbounded bijective balanced tree-questionable-width,
since cutting as we did for the grids would give something close to a ``zigzag'' when joining both parts.
Thus, it is probable that bijective balanced tree-questionable-width is too weak to characterize first-order logic decidability.
Moreover, we believe that these triangulated grids, and also any class of graphs embeddable in some fixed surface,
have bounded non-bijective balanced tree-questionable-width, since it should be possible to cut the graph on the surface
 ``along stripes of width 2'', the number of stripes being bounded by a logarithm of the number of vertices
 plus some constant depending on the surface.
If our intuitions are correct, non-bijective balanced tree-questionable-width would be too powerful for first-order logic decidability,
and non-bijective balanced tree-questionable-width would be strictly stronger than bijective balanced tree-questionable-width.
This is crude reasoning, it may also be that fine tuning of the functions bounding structural and logical depths
of the tree-questionable-decompositions, or maybe additional constraints on the rooted tree, may yield
 the sought width for first-order logic decidability.

We note that other attempts at defining hierarchic decompositions can be done along the same lines.
Indeed, instead of a questionable representation for the bipartite structure obtained when joining two substructures,
one may consider a path/tree/clique-decomposition of the bipartite structure
(or \begin{math}k\end{math}-partite structure if we join more than two substructures at once).
Thus, defining ``second-order recursive path/tree/clique-decomposition''.
(We do not list all variants of base decompositions that can be used instead of questionable representations,
or path/tree/clique-decomposition.)
For these decompositions, the same lemma that any binary structure can be decomposed with a
``linear second-order recursive path/tree/clique-decomposition'' of linear depth holds.
One may generalize, by defining ``third-order recursive path/tree/clique-decomposition'',
where each node of the decomposition is associated to a ``second-order recursive path/tree/clique-decomposition''
of the \begin{math}k\end{math}-partite structure,
and so on.
Most of these ``recursive hierarchic decompositions'', maybe all, will be too powerful to obtain any
useful result. However, maybe first-order logic decidability will correspond to such recursive hierarchic decomposition
for an appropriate base decomposition and appropriate additional constraints.

%\section{Open problems}
%\label{section:open_problems}

%\begin{openproblem}
%Is it true that any (countable) partial order \begin{math}PO\end{math}
%has a (finite) partial questionable representation?
%\end{openproblem}

%\section{Conclusion}

\section*{Acknowledgements}
\label{section:acknowledgements}

We thank God: Father, Son, and Holy Spirit. We thank Maria.
They help us through our difficulties in life.
We thank St\'ephan Thomass\'e for his very useful remark.

\nocite{*}
\bibliographystyle{abbrvnat}
% use the following instead if you encounter problems 
%\bibliographystyle{alpha}
\bibliography{LL2019OrdreRepresentations}

\begin{thebibliography}{29}
\providecommand{\natexlab}[1]{#1}
\providecommand{\url}[1]{\texttt{#1}}
\expandafter\ifx\csname urlstyle\endcsname\relax
  \providecommand{\doi}[1]{doi: #1}\else
  \providecommand{\doi}{doi: \begingroup \urlstyle{rm}\Url}\fi

\bibitem[DBL(1991)]{DBLP:conf/stoc/STOC23}
\emph{Proceedings of the Twenty Third Annual {ACM} Symposium on Theory of
  Computing, 6-8 May 1991, New Orleans, Louisiana, USA}, 1991. ACM.

\bibitem[Bodlaender(1988)]{DBLP:conf/wg/Bodlaender88}
H.~L. Bodlaender.
\newblock {NC}-algorithms for graphs with small treewidth.
\newblock In  \citet{DBLP:conf/wg/1988}, pages 1--10.
\newblock ISBN 3-540-50728-0.
\newblock \doi{10.1007/3-540-50728-0\_32}.
\newblock URL \url{https://doi.org/10.1007/3-540-50728-0\_32}.

\bibitem[Brightwell and Winkler(1991)]{DBLP:conf/stoc/BrightwellW91}
G.~Brightwell and P.~Winkler.
\newblock Counting linear extensions is \diesep-complete.
\newblock In \emph{STOC} \citet{DBLP:conf/stoc/STOC23}, pages 175--181.

\bibitem[Cantor(1895)]{Cantor1895}
G.~Cantor.
\newblock Beitr\"age zur {B}egr\"undung der transfiniten {M}engenlehre.
\newblock \emph{Math. Ann.}, 46:\penalty0 481--512, 1895.

\bibitem[Corneil et~al.(1981)Corneil, Lerchs, and
  Burlingham]{DBLP:journals/dam/CorneilLB81}
D.~G. Corneil, H.~Lerchs, and L.~S. Burlingham.
\newblock Complement reducible graphs.
\newblock \emph{Discrete Applied Mathematics}, 3\penalty0 (3):\penalty0
  163--174, 1981.
\newblock \doi{10.1016/0166-218X(81)90013-5}.
\newblock URL \url{https://doi.org/10.1016/0166-218X(81)90013-5}.

\bibitem[Courcelle and Engelfriet(2012)]{DBLP:books/daglib/0030804}
B.~Courcelle and J.~Engelfriet.
\newblock \emph{Graph Structure and Monadic Second-Order Logic - {A}
  Language-Theoretic Approach}, volume 138 of \emph{Encyclopedia of mathematics
  and its applications}.
\newblock Cambridge University Press, 2012.
\newblock ISBN 978-0-521-89833-1.
\newblock URL
  \url{http://www.cambridge.org/fr/knowledge/isbn/item5758776/?site\_locale=fr\_FR}.

\bibitem[Courcelle and Oum(2007)]{DBLP:journals/jct/CourcelleO07}
B.~Courcelle and S.~Oum.
\newblock Vertex-minors, monadic second-order logic, and a conjecture by
  {S}eese.
\newblock \emph{J. Comb. Theory, Ser. B}, 97\penalty0 (1):\penalty0 91--126,
  2007.

\bibitem[Courcelle and Vanicat(2003)]{DBLP:journals/dam/CourcelleV03}
B.~Courcelle and R.~Vanicat.
\newblock Query efficient implementation of graphs of bounded clique-width.
\newblock \emph{Discrete Applied Mathematics}, 131\penalty0 (1):\penalty0
  129--150, 2003.
\newblock \doi{10.1016/S0166-218X(02)00421-3}.
\newblock URL \url{https://doi.org/10.1016/S0166-218X(02)00421-3}.

\bibitem[Crespelle and Paul(2006)]{DBLP:journals/dam/CrespelleP06}
C.~Crespelle and C.~Paul.
\newblock Fully dynamic recognition algorithm and certificate for directed
  cographs.
\newblock \emph{Discrete Applied Mathematics}, 154\penalty0 (12):\penalty0
  1722--1741, 2006.
\newblock \doi{10.1016/j.dam.2006.03.005}.
\newblock URL \url{https://doi.org/10.1016/j.dam.2006.03.005}.

\bibitem[Disanto et~al.(2010)Disanto, Ferrari, Pinzani, and
  Rinaldi]{Disanto2010}
F.~Disanto, L.~Ferrari, R.~Pinzani, and S.~Rinaldi.
\newblock Catalan lattices on series parallel interval orders.
\newblock \emph{Associahedra, {T}amari Lattices and Related Structures:
  {T}amari Memorial Festschrift}, 299, 07 2010.
\newblock \doi{10.1007/978-3-0348-0405-9_16}.

\bibitem[Eiben et~al.(2016)Eiben, Ganian, Kangas, and
  Ordyniak]{DBLP:conf/esa/EibenGKO16}
E.~Eiben, R.~Ganian, K.~Kangas, and S.~Ordyniak.
\newblock Counting linear extensions: Parameterizations by treewidth.
\newblock In  \citet{DBLP:conf/esa/2016}, pages 39:1--39:18.
\newblock ISBN 978-3-95977-015-6.
\newblock \doi{10.4230/LIPIcs.ESA.2016.39}.
\newblock URL \url{https://doi.org/10.4230/LIPIcs.ESA.2016.39}.

\bibitem[Fischer et~al.(1979)Fischer, DeMillo, Lynch, Burkhard, and
  Aho]{DBLP:conf/stoc/STOC11}
M.~J. Fischer, R.~A. DeMillo, N.~A. Lynch, W.~A. Burkhard, and A.~V. Aho,
  editors.
\newblock \emph{Proceedings of the 11th Annual {ACM} Symposium on Theory of
  Computing, April 30 - May 2, 1979, Atlanta, Georgia, {USA}}, 1979. {ACM}.

\bibitem[Gurski et~al.(2019)Gurski, Komander, and
  Rehs]{DBLP:journals/corr/abs-1907-00801}
F.~Gurski, D.~Komander, and C.~Rehs.
\newblock On characterizations for subclasses of directed co-graphs.
\newblock \emph{CoRR}, abs/1907.00801, 2019.
\newblock URL \url{http://arxiv.org/abs/1907.00801}.

\bibitem[Harvey and van~der Hoeven(2018)]{DBLP:journals/corr/abs-1802-07932}
D.~Harvey and J.~van~der Hoeven.
\newblock Faster integer multiplication using short lattice vectors.
\newblock \emph{CoRR}, abs/1802.07932, 2018.
\newblock URL \url{http://arxiv.org/abs/1802.07932}.

\bibitem[Hausdorff(1907)]{Hausdorff1907}
F.~Hausdorff.
\newblock Untersuchungen \"uber {O}rdnungstypen {V}.
\newblock \emph{Ber. \"uber die {V}erhandlungen der {K}\"onigl. {S}\"achs.
  {G}es. der {W}iss. zu {L}eipzig. {M}ath.-phys. {K}lasse}, 59:\penalty0
  105--159, 1907.

\bibitem[Kambhampati(2016)]{DBLP:conf/ijcai/2016}
S.~Kambhampati, editor.
\newblock \emph{Proceedings of the Twenty-Fifth International Joint Conference
  on Artifical Intelligence, {IJCAI} 2016, New York, NY, USA, 9-15 July 2016},
  2016. {IJCAI/AAAI} Press.
\newblock ISBN 978-1-57735-770-4.
\newblock URL \url{http://www.ijcai.org/Proceedings/2016}.

\bibitem[Kangas et~al.(2016)Kangas, Hankala, Niinim{\"{a}}ki, and
  Koivisto]{DBLP:conf/ijcai/KangasHNK16}
K.~Kangas, T.~Hankala, T.~M. Niinim{\"{a}}ki, and M.~Koivisto.
\newblock Counting linear extensions of sparse posets.
\newblock In  \citet{DBLP:conf/ijcai/2016}, pages 603--609.
\newblock ISBN 978-1-57735-770-4.
\newblock URL \url{http://www.ijcai.org/Abstract/16/092}.

\bibitem[Kangas et~al.(2018)Kangas, Koivisto, and
  Salonen]{DBLP:conf/iwpec/KangasKS18}
K.~Kangas, M.~Koivisto, and S.~Salonen.
\newblock A faster tree-decomposition based algorithm for counting linear
  extensions.
\newblock In  \citet{DBLP:conf/iwpec/2018}, pages 5:1--5:13.
\newblock ISBN 978-3-95977-084-2.
\newblock \doi{10.4230/LIPIcs.IPEC.2018.5}.
\newblock URL \url{https://doi.org/10.4230/LIPIcs.IPEC.2018.5}.

\bibitem[Lawler(1976)]{Lawler1976}
E.~L. Lawler.
\newblock Graphical algorithms and their complexity.
\newblock \emph{Math. Centre Tracts}, 81:\penalty0 3--32, 1976.

\bibitem[Linial et~al.(1985)Linial, Saks, and Shor]{Linial1985}
N.~Linial, M.~Saks, and P.~Shor.
\newblock Largest induced suborders satisfying the chain condition.
\newblock \emph{Order}, 2\penalty0 (3):\penalty0 265--268, 1985.
\newblock \doi{10.1007/BF00333132}.
\newblock URL \url{https://doi.org/10.1007/BF00333132}.

\bibitem[Lyaudet(2018)]{DBLP:journals/corr/abs-1809-00954}
L.~Lyaudet.
\newblock A class of orders with linear? time sorting algorithm.
\newblock \emph{CoRR}, abs/1809.00954, 2018.
\newblock URL \url{http://arxiv.org/abs/1809.00954}.

\bibitem[Lyaudet(2007)]{Lyaudet2007}
L.~F. B.~F. Lyaudet.
\newblock \emph{Graphes et hypergraphes : complexit\'es algorithmique et
  alg\'ebrique}.
\newblock PhD thesis, \'Ecole Normale Sup\'erieure de Lyon, december 2007.

\bibitem[Paul and Pilipczuk(2019)]{DBLP:conf/iwpec/2018}
C.~Paul and M.~Pilipczuk, editors.
\newblock \emph{13th International Symposium on Parameterized and Exact
  Computation, {IPEC} 2018, August 20-24, 2018, Helsinki, Finland}, volume 115
  of \emph{LIPIcs}, 2019. Schloss Dagstuhl - Leibniz-Zentrum fuer Informatik.
\newblock ISBN 978-3-95977-084-2.
\newblock URL \url{http://www.dagstuhl.de/dagpub/978-3-95977-084-2}.

\bibitem[Sankowski and Zaroliagis(2016)]{DBLP:conf/esa/2016}
P.~Sankowski and C.~D. Zaroliagis, editors.
\newblock \emph{24th Annual European Symposium on Algorithms, {ESA} 2016,
  August 22-24, 2016, Aarhus, Denmark}, volume~57 of \emph{LIPIcs}, 2016.
  Schloss Dagstuhl - Leibniz-Zentrum fuer Informatik.
\newblock ISBN 978-3-95977-015-6.
\newblock URL
  \url{http://drops.dagstuhl.de/opus/portals/lipics/index.php?semnr=16013}.

\bibitem[Seese(1991)]{DBLP:journals/apal/Seese91}
D.~Seese.
\newblock The structure of models of decidable monadic theories of graphs.
\newblock \emph{Ann. Pure Appl. Logic}, 53\penalty0 (2):\penalty0 169--195,
  1991.

\bibitem[Sierpi\'nski(1932)]{Sierpinski1932}
W.~Sierpi\'nski.
\newblock G\'en\'eralisation d'un th\'eor\`eme de {C}antor concernant les
  ensembles ordonn\'es d\'enombrables.
\newblock \emph{Fundamenta Mathematicae}, 18:\penalty0 280--284, 1932.

\bibitem[Trenk(1998)]{DBLP:journals/dm/Trenk98}
A.~N. Trenk.
\newblock On k-weak orders: Recognition and a tolerance result.
\newblock \emph{Discrete Mathematics}, 181\penalty0 (1-3):\penalty0 223--237,
  1998.
\newblock \doi{10.1016/S0012-365X(97)00038-1}.
\newblock URL \url{https://doi.org/10.1016/S0012-365X(97)00038-1}.

\bibitem[Valdes et~al.(1979)Valdes, Tarjan, and
  Lawler]{DBLP:conf/stoc/ValdesTL79}
J.~Valdes, R.~E. Tarjan, and E.~L. Lawler.
\newblock The recognition of series parallel digraphs.
\newblock In  \citet{DBLP:conf/stoc/STOC11}, pages 1--12.
\newblock \doi{10.1145/800135.804393}.
\newblock URL \url{https://doi.org/10.1145/800135.804393}.

\bibitem[van Leeuwen(1989)]{DBLP:conf/wg/1988}
J.~van Leeuwen, editor.
\newblock \emph{Graph-Theoretic Concepts in Computer Science, 14th
  International Workshop, {WG} '88, Amsterdam, The Netherlands, June 15-17,
  1988, Proceedings}, volume 344 of \emph{Lecture Notes in Computer Science},
  1989. Springer.
\newblock ISBN 3-540-50728-0.
\newblock \doi{10.1007/3-540-50728-0}.
\newblock URL \url{https://doi.org/10.1007/3-540-50728-0}.

\end{thebibliography}
\label{section:bibliography}

\end{document}